\documentclass[11pt,reqno]{amsart}
\usepackage{fullpage}
\usepackage{amsmath,amsthm,verbatim,amssymb,amsfonts,amscd, graphicx,bbm}
\usepackage{graphics}
\usepackage{mathtools}
\usepackage{mathrsfs}
\usepackage{esint}
\usepackage{color}

\setlength{\parskip}{3pt}

\usepackage[colorlinks=true, pdfstartview=FitV, linkcolor=black, citecolor=blue, urlcolor=blue]{hyperref}

\newtheorem{theorem}{Theorem}
\newtheorem{proposition}[theorem]{Proposition}
\newtheorem{lemma}[theorem]{Lemma}
\newtheorem{corollary}[theorem]{Corollary}

\theoremstyle{definition}
\newtheorem{remark}[theorem]{Remark}

\newtheorem{assumption}{Assumption}

\newcommand{\cref}[1]{Corollary~\ref{c.#1}}

\numberwithin{equation}{section}
\numberwithin{theorem}{section}

\newcommand{\Z}{\mathbb{Z}}

\newcommand{\R}{\mathbb{R}}

\newcommand{\bT}{\mathbb{T}}

\newcommand{\cS}{\mathcal{S}}
\newcommand{\cD}{\mathcal{D}}
\newcommand{\cQ}{\mathcal{Q}}
\newcommand{\cP}{\mathcal{P}}

\newcommand{\cK}{\mathcal{K}}
\newcommand{\cR}{\mathcal{R}}

\newcommand{\cA}{\mathcal{A}}

\newcommand{\cE}{\mathcal{E}}
\newcommand{\ep}{\varepsilon}

\newcommand{\pv}{\mathrm{p.v.}}


\renewcommand{\tilde}{\widetilde}

\newcommand{\ol}{\overline}

\newcommand{\eps}{\varepsilon}

\newcommand{\ran}{\mathrm{ran}}

\definecolor{darkgreen}{rgb}{0,0.5,0}
\definecolor{darkblue}{rgb}{0,0,0.7}
\definecolor{darkred}{rgb}{0.9,0.1,0.1}
\definecolor{lightblue}{rgb}{0,0.51,1}





\title{Unified quantitative analysis of the Stokes equations in dilute perforated domains via layer potentials}

\author[W. Jing]{Wenjia Jing}
\address[W. Jing]{Yau Mathematical Sciences Center, Tsinghua University, Beijing 100084 and Yanqi Lake Beijing Institute of Mathematical Sciences and Applications, Beijing 101408, P.R. China}
\email{wjjing@tsinghua.edu.cn}

\author[Y. Lu]{Yong Lu}
\address[Y. Lu]{Department of Mathematics, Nanjing University, Nanjing 210093, P.R. China}
\email{luyong@nju.edu.cn}

\author[C. Prange]{Christophe Prange}
\address[C. Prange]{Cergy Paris Universit\'e, Laboratoire de Math\'ematiques AGM, UMR CNRS 8088, France}
\email{christophe.prange@cyu.fr}

\date{}

\begin{document}

\begin{abstract}
We develop a unified method to obtain the quantitative homogenization of Stokes systems in periodically perforated domains with no-slip boundary conditions on the perforating holes. The main novelty of our paper is a quantitative analysis of the asymptotic behavior of the two-scale cell correctors via periodic Stokes layer potentials. The two-scale cell correctors were introduced and analyzed qualitatively by Allaire in the early 90's \cite{Allaire-3}. Thanks to our layer potential approach, we also provide a novel explanation of the conductivity matrix in Darcy's model, of the Brinkman term in Brinkman's model, and explain the special behavior for $d=2$. Finally, we also prove quantitative homogenization error estimates in various regimes of ratios between the size of the perforating holes and the typical distance between holes. In particular we handle a subtle issue in the dilute Darcy regime related to the non-vanishing of the Darcy velocity on the boundary.

\smallskip

\noindent{\bf Key words}: periodic homogenization, perforated domain, Darcy's law, Brinkman's law, Stokes layer potentials

\smallskip

\noindent{\bf Mathematics subject classification (MSC 2020)}: 35B27, 35B40, 35J08, 35Q35, 76D07

\end{abstract}

\maketitle


\section{Introduction}

This paper is devoted to a classical problem in the homogenization of fluids, namely the Stokes problem in a perforated structure $\Omega^{\eps,\eta}$ with $\ep$-periodic distributed holes of size $\ep \eta$, where $\eta$ measures the ratio between the size of the holes and the mutual distance between the holes:
\begin{equation}
\label{eq:hetstokes}
\left\{
\begin{aligned}
&-\Delta u^{\eps,\eta} + \nabla p^{\eps,\eta} = f, &\quad &x \in \Omega^{\eps,\eta},\\
&\nabla \cdot u^{\eps,\eta} = 0, &\quad &x \in \Omega^{\eps,\eta},\\
&u^{\eps,\eta} = 0, &\quad &x \in \partial \Omega^{\eps,\eta},
\end{aligned}
\right.
\end{equation}
where as usual $u^{\eps,\eta}$ is the velocity field and $p^{\eps,\eta}$ the pressure. Here we stress that the domain, the velocity and the pressure all depend on $\eps$ and $\eta$. This dependence on two parameters is at the heart of this work and our goal is to provide a unified construction and analysis of the two-scale correctors that depend on both $\ep$ and $\eta$. Later on, in order to alleviate the notations, we will remove the explicit dependence on $\eta$. 

The homogenization theory concerns the asymptotic behavior of \eqref{eq:hetstokes} in the limit $\eps \to 0$. Note that $\eta^d$ is essentially the volume fraction of the holes and the parameter $\eta$ plays an important role in the homogenization of \eqref{eq:hetstokes}. We say the geometric configuration of the holes is in the \emph{dilute} setting if $\eta \to 0$ as $\eps \to 0$ (e.g., if $\eta = \eps^\alpha$ for some $\alpha > 0$) and is in the \emph{classical} setting if otherwise, i.e., if $\eta$ is of order one. See Figure \ref{fig.porous} for an illustration of dilute holes.

\begin{figure}[t]
\begin{center}
\includegraphics[scale=.6]{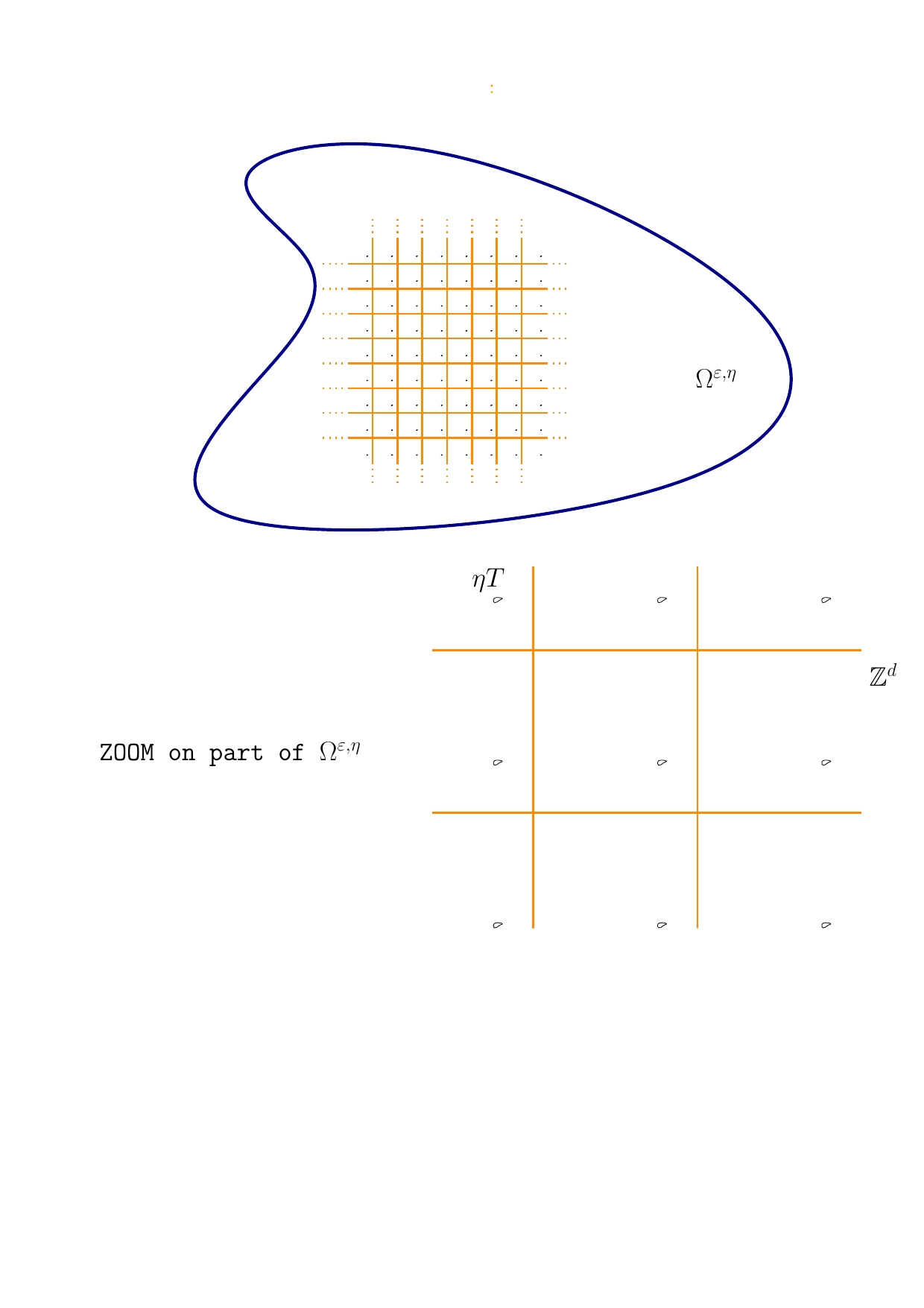}
\caption{The periodically perforated domain}
\label{fig.porous}
\end{center}
\end{figure}

Although this problem has a long history, dating back to the first derivation of Darcy's law by Tartar \cite{Tartar}, there is a recent surge of research activity around system \eqref{eq:hetstokes} that mainly focuses on two issues: (i) optimal quantitative estimates, (ii) the large-scale/uniform regularity theory.  Our work focuses on the first question. We develop a method based on the quantitative analysis of two-scale cell correctors via layer potentials that enables to treat in a unified way all dilute and non-dilute regimes for the parameters $\eps$ and $\eta$. This line of research was pioneered by Allaire in \cite{Allaire-3} for the Stokes system, but it was not until recently that a systematic quantitative study involving layer potentials was carried out by the first author of this paper, first for the Laplace equation \cite{Jing20} and then for the Lam\'e system \cite{MR4172687}; for a qualitative study in the context of the Stokes equations, see \cite{MR4145838}.

For the non-dilute classical case, the first quantitative result for the homogenization of \eqref{eq:hetstokes} was given by Maru\v{s}i\'c-Paloka and  Mikeli\'c  in \cite{MM96}, where $O(\ep^{\frac 16})$ convergence rates were shown in dimension two ($d=2$), and the proof there applies to the stationary Navier-Stokes equations also.  This result was improved recently by Shen in \cite{MR4432951} where sharp convergence rate of order $O(\sqrt \ep)$ was achieved for all $d \geq 2$. Shen also studied the uniform regularity problem for \eqref{eq:hetstokes} in \cite{Shen22}, and established large-scale interior $C^{1, \alpha}$ and interior Lipschitz estimates for the velocity as well as the corresponding estimates for the pressure field. Moreover, uniform $W^{k,p}$ estimates for any $k\geq 1,  \ 1 < p < \infty$ was also established in \cite{Shen22} for the smooth setting, which offered a rigorous proof for the main results claimed by Masmoudi in \cite{Mas04}.  Recently in \cite{BAO24}, Balazi \emph{et.\,al.}\,considered perforated domain with possibly connected solid structure, which is more physical relevant compared with isolated solid obstacles and they obtained sharp $O(\sqrt \ep)$ convergence rate. 

Quantitative homogenization of elliptic equations with high contrast coefficients has been studied intensively as well. In \cite{Shen23-JDE} and furthermore in \cite{SZ23}, quantitative convergence rates, together with the uniform weighted Lipschitz and $W^{1,p}$ estimates were given for scalar equations. See \cite{MR4267502,FJ_highcontrast} for convergence rates and uniform Lipschitz regularity in elliptic systems with high contrast coefficients, and see \cite{WXZ21} for almost-sharp error estimates for linear elasticity systems in periodically perforated domains. In those works, the geometric configuration of the perforating holes or the high contrast inclusions is the non-dilute classical one.

For the dilute case with multi-scale period structure, in a series of papers, Shen \emph{et.\,al.} studied the uniform regularity theory for the scalar Poisson equation in perforated domains with zero Dirichlet boundary there. In \cite{Shen23}, nearly optimal $W^{1,p}, \ 1<p<\infty$ estimates with bounding constants depending explicitly on the small parameters $\ep$ and $\eta$ were established; the results were further improved in \cite{SW23, RS24}.

Our goal in this paper is to present a unified proof of convergence rates of the Stokes system for three different regimes of dilute perforated domain where the limit systems are Darcy's law for the case of relatively large holes, Brinkman's law for the case of critical size holes, and the full domain Stokes equations for relatively small holes. Our proof relies on layer potential analysis of the periodic two-scale cell problem.

\subsection{Criticality: a short state of the art}

The homogenized equation for \eqref{eq:hetstokes} in the classical setting,  i.e., $\eta$ is a fixed number, is Darcy's law. It can be derived using the two-scale expansion method; see e.g., \cite{Keller,Sanchez}. The formal arguments were first made rigorous by Tartar \cite{Tartar}, where a cell problem plays a key role: it is the building block for the construction of oscillating test functions.

The asymptotic behavior of \eqref{eq:hetstokes} in the dilute setting, i.e. $\eta = \eta(\eps)$ tending to zero as $\eps\to 0$, was systematically studied by Allaire \cite{Allaire91-1,Allaire91-2}. These two works were significant generalizations of an earlier work by Cioranescu and Murat \cite{CioMur-1} for the homogenization of Poisson equations in dilutely perforated domains. Allaire introduced the scaling factor 
\begin{equation}
\label{eq:sigeps}
\sigma_\eps := \eps \kappa_\eta^{-1} = \begin{cases}
\eps |\log\eta|^{\frac12}, &\qquad d=2,\\
\eps \eta^{-\frac{d-2}{2}}, &\qquad d\ge 3,
\end{cases}
\qquad\text{and}\qquad
\kappa_\eta := \begin{cases} |\log \eta|^{-\frac{1}{2}}, \qquad &d = 2,\\
\eta^{\frac{d-2}{2}}, \qquad &d\ge 3.
\end{cases}
\end{equation}
and according to the asymptotic behavior of $\sigma_\eps$ as $\eps \to 0$, he identified three different regimes for the homogenization of \eqref{eq:hetstokes} in the dilute setting:  
\begin{itemize}
	\item If $\sigma_\eps \to \sigma_0$ as $\eps \to 0$ and $\sigma_0$ is a positive number comparable to $1$, i.e., $\eta = O(\eps^{2/(d-2)})$ for $d\ge 3$ or $|\log\eta| \sim \frac{1}{\eps^2}$ for $d=2$, we say the configuration is in the \emph{\underline{critical}} setting.  
	\item If $\sigma_\eps \to 0$ as $\eps \to 0$, i.e., $\eta \gg \eps^{2/(d-2)}$ for $d\ge 3$ or $|\log\eta| \gg \frac{1}{\eps^2}$ for $d=2$, we say the configuration is in the \emph{\underline{dilute super-critical}} setting. 
	\item If $\sigma_\eps \to \infty$ as $\eps \to 0$, i.e., $\eta \ll \eps^{2/(d-2)}$ for $d\ge 3$ or $|\log\eta| \ll \frac{1}{\eps^2}$ for $d=2$, we say the configuration is in the \emph{\underline{sub-critical}} setting.
\end{itemize}
Note that the classical setting, where $\eta \sim 1$ and $\sigma_\eps \sim \eps$, is a super-critical one. 

The qualitative homogenization theory in those dilute settings is as follows: (i) For the critical setting, the homogenized system for \eqref{eq:hetstokes} is Brinkman's law where an extra zero-order term appears compared with the Stokes operator. (ii) For the dilute super-critical setting, the homogenized system is the Darcy's law but there are two main differences compared with the non-dilute setting. First, $\sigma_\eps^2$ varies with $\eta$ and determines the $L^2$-scale under which the velocity field converges to the limit; secondly, the permeability matrix in the Darcy's law remains the same, i.e., does not vary with $\eta$, for all dilute super-critical settings and is different from the non-dilute super-critical one. (iii) For the sub-critical setting, the homogenized system is the unperturbed Stokes system in the whole domain without perforation.  

For quantitative convergence rate of periodic homogenization of \eqref{eq:hetstokes} in bounded perforated domain, the critical and sub-critical settings are simpler because the boundary condition on $\partial \Omega$ for the velocity field in the limit system agrees with \eqref{eq:hetstokes}. This is not the case for the super-critical case, in either the classical or the dilute settings; indeed, the velocity field $u$ in Darcy's law only satisfies $u\cdot \nu = 0$ on $\partial \Omega$ and its tangential component can be of order one. It is known that the standard cut-off useful for convergence rate in the homogenization of elliptic equations does not work well for the super-critical settings also because of the incompressibility condition. Construction of boundary layer correctors for $d=2$ was given by Maru\v{s}i\'{c}-Paloka and Mikeli\'{c} \cite{MM96} and they proved $O(\eps^{\frac16})$ convergence rate. Their method worked for certain stationary Navier-Stokes equations but is  restricted to $d=2$ because the tangential velocity corrector is constructed via stream functions. Recently, in \cite{MR4432951} and dealing with the more general case of $d\ge 2$ and non-zero Dirichlet data of order $O(\eps^2)$ on the outer boundary $\partial \Omega$, She proved the sharp $O(\sqrt{\eps})$ homogenization rate for \eqref{eq:hetstokes} by introducing two boundary layer correctors, with properly chosen data on $\partial \Omega$, one tangential and the other normal to $\partial \Omega$. It was observed in \cite{WXZ22} that the so-called radial test functions (see section \ref{subsec.dilute.quant} below) can be constructed so that a boundary cut-off argument goes through for the convergence rate to Darcy's law. In section \ref{subsec.dilute.quant} of this paper, we use this observation for the convergence rate proof for the dilute Darcy's law, using only a cut-off argument.

Very recently after the completion of our work, we noticed that, in \cite{BAO24}, Balazi, Allaire and Omnes studied the homogenization \eqref{eq:hetstokes} for $d\ge 2$ in the more physical relevant case of connected solid structures and obtained $O(\sqrt{\eps})$ convergence rate. Their method utilizes a cut-off argument only because they managed to construct a velocity field supported near $\partial \Omega$ which plays the role of a cut-off of the limit velocity field $u$ but remains incompressible. Their construction generalizes an earlier construction in \cite{MR4714506} for $d=2$. In some sense, the construction is achieved by a remarkable and simple observation that one can insert the usual cut-off inside the $\mathrm{curl}$ operator to simultaneously achieve cut-off and preserve incompressibility. Clearly, we can use this construction in section \ref{subsec.dilute.quant} alternatively. 

\subsection{Allaire's two-scale cell correctors: a unified approach to homogenization in perforated domains}

As in the case of Poisson equations \cite{Jing20} and of Lam\'e systems \cite{MR4172687}, our strategy to establish quantitative homogenization in periodically dilute perforated domains, with Dirichlet type boundary conditions in the holes, is to build two-scale cell correctors.

In this paper, $\eta$ depends in general on $\eps$, and we hence need to study the limit of $(\chi_k,\omega_k)$ as $\eta$ converges to zero with $\eps$. We find it more convenient to work with a rescaled cell problem, defining $\chi^\eta_k(x) = \eta^{d-2}\chi_k(\eta x)$ and $\omega^\eta_k(x) = \eta^{d-1}\omega_k(\eta x)$. These satisfy the following Stokes system in the rescaled torus $\frac1\eta \bT^d$, with a hole $\ol T$ of unit size removed:
\begin{equation}
\label{eq:etacell}
\left\{
\begin{aligned}
&-\Delta \chi^\eta_k + \nabla \omega^\eta_k = \eta^d e_k , &\quad &x \in \eta^{-1}\bT^d \setminus \ol{T},\\
&\nabla \cdot \chi^\eta_k = 0, &\quad &x \in \eta^{-1}\bT^d \setminus \ol{T},\\
&\chi^\eta_k = 0, &\quad &x \in \partial T.
\end{aligned}
\right.
\end{equation}
For each fixed $\eta \in (0,1)$, the classical theory for Stokes system applies to \eqref{eq:etacell}, which asserts that a weak solution $\chi^\eta_k$ is uniquely determined in $\eta^{-1}\bT^d\setminus \ol T$ and $\omega^\eta_k$ is unique up to an additive constant. The correctors $\chi^\eta_k$ and $\omega^\eta_k$ are extended by zero in $T$ so that they are functions on the torus. The precise choice of $\omega^\eta_k$ (outside the hole) is  made clear in \eqref{eq:etachi}. 

Finally, the building blocks for oscillating test functions are given by the further rescaled functions
\begin{equation}
\label{eq:vqeps}
v^\eps_k(x) := \begin{cases}
\chi^\eta_k\left(\frac{x}{\eps \eta} \right)\;&\text{if } d\ge 3\\
\frac{1}{|\log\eta|}\chi^\eta_k\left(\frac{x}{\eps \eta}\right)\;&\text{if } d=2,
\end{cases}
\qquad\text{and} \qquad q^\eps_k (x) := \begin{cases}\frac{1}{\eps \eta} \omega^\eta_k \left(\frac{x}{\eps \eta}\right)\;&\text{if } d\ge 3\\
\frac{1}{\eps\eta|\log \eta|} \omega^\eta_k\left(\frac{x}{\eps \eta}\right)\;&\text{if } d=2.
\end{cases}
\end{equation}
Clearly, the pair $(v^\eps_k,q^\eps_k)$ is $\eps \bT^d$-periodic and satisfies
\begin{equation}
\label{eq:rscell}
\left\{
\begin{aligned}
&-\Delta v^\eps_k + \nabla q^\eps_k = \frac{1}{\sigma^2_\eps} e_k , &\quad &x \in \eps\R^d_f,\\
&\nabla \cdot v^\eps_k = 0, &\quad &x \in \eps\R^d_f,\\
&v^\eps_k = 0, &\quad &x \in \R^d\setminus \eps\R^d_f.
\end{aligned}
\right.
\end{equation}
Note that the holes in $\Omega^{\eps}$ coincide with (part of) those in $\eps \R^d_f$. Hence, given any scalar valued test function $\varphi \in C^\infty_c(\Omega)$, $v^\eps_k \varphi$ is easily checked to be an oscillatory function in $H^1_0(\Omega^\eps)$ and we will use it as the test function for \eqref{eq:hetstokes}. 
Our method for unified quantitative homogenization results relies on the asymptotic analysis for $(v^\eps_k,q^\eps_k)$, in the limit when $\eps \to 0$, see Theorem \ref{eq:vqeps}.

Let us mention that although Allaire did not use the two-scale cell problems \eqref{eq:etacell} and \eqref{eq:rscell} to prove homogenization results in \cite{Allaire91-1,Allaire91-2}, but he studied those problems in \cite{Allaire-3} to establish a continuity of the effective conductivity matrix $M = M_\eta$ in Darcy's law, in the limit when the hole-cell ratio $\eta$ vanishes to zero. In fact, he showed that $(\chi^\eta_k,\omega^\eta_k)$ converge in some proper sense to solutions of a problem which he named the \emph{local} problem: for $d \ge 3$ and for every $k\in\{1,\ldots\, d\}$,
\begin{equation}
\label{eq:locprob}
\left\{
\begin{aligned}
&-\Delta w_k + \nabla q_k = 0, \qquad &\text{in } \R^d\setminus \ol T,\\
&\nabla \cdot w_k = 0, \qquad &\text{in } \R^d\setminus \ol T,\\
&w_k = 0, \qquad &\text{in } \partial T,\\
&w_k \to M^{-1}e_k, \qquad &\text{as } |x| \to \infty.
\end{aligned}
\right.
\end{equation}
Here, $M$ is a symmetric matrix defined by the components $(m_{ik})$ in \eqref{eq:mjk} below. The behavior at infinity of $w_k$, the well-posedness of \eqref{eq:locprob} and the positivity of the matrix $M$ are all established in \cite{Allaire-3}, as parts of the proof of the convergence of $(\chi^\eta_k,\omega^\eta_k)$. Using those results, one can devise a unified proof based on the classical oscillating test function argument for qualitative homogenization that works for all regimes of vanishing $\eta$; see \cite{MR4145838}.

\subsection{Outline of the main results of the paper}

In this paper, we provide a unified quantitative homogenization method for all values of the dilution parameter $\eta$. Our method grounds on quantitative estimates of two-scale correctors via layer potential techniques. Indeed, we quantify the convergence results for the unified correctors $(v^\eta_k,q^\eta_k)$, see the definition \eqref{eq:rscell}, first obtained qualitatively by Allaire \cite{Allaire-3}. 
The advantages of our method are the following. Firstly, the method provides natural correctors $(v^\eta_k,q^\eta_k)$ that enable us to treat all asymptotic regimes, determined by the relative smallness of $\eta$ compared with $\eps$, in a unified manner. Secondly, we obtain explicit representation formulas which lead to quantitative convergence rates for the correctors $(v^\eta_k,q^\eta_k)$. 
Thirdly, by exploring the properties of layer potentials, we provide a novel explanation of the conductivity matrix in Darcy's model, of the Brinkman term in Brinkman's model, and explain the special behavior for $d=2$.

We start with the limit and estimates for the velocity field $v^\eps_k$. Let $M$ be the symmetric positive definite matrix defined by
\begin{equation}
\label{eq:Mdef}
M := \begin{cases}
(A_T)^{-1} \qquad &\text{for } d\ge 3,\\
{4\pi} I \qquad &\text{for } d = 2.
\end{cases}
\end{equation}
Here, for $d\ge 3$, $A_T$ is the matrix defined by \label{eq:Adef} in Lemma \ref{lem:Kmap} using layer potentials. In fact, $M = A_T^{-1}$ is the analog of Newtonian capacitance associated to $T$ in the setting of homogeneous Stokes system.

\begin{theorem}[quantitative analysis of two-scale cell correctors]\label{lem:cellqual}
We assume that Assumption \ref{assump:geo} holds. Let $v^\eps_k$, $k=1,\dots\, d$, be the velocity field defined by \eqref{eq:vqeps} and solving \eqref{eq:rscell}. There exists a universal constant $C> 0$ so that the following results hold. 
\begin{itemize}
\item[(1)] For each $k$ and for all regimes,
\begin{equation}
  \label{eq:vbdd}
\|\nabla v^\eps_k\|_{L^2(\Omega)}  \le C\sigma_\eps^{-1}.
\end{equation}
\item[(2)] In the critical setting and for all $k$,
\begin{equation}
\label{eq:gradvconv}
\nabla v^\eps_k \rightharpoonup  0, \qquad \text{in } L^2(\Omega).
\end{equation}
\item[(3)] In all dilute regimes and for all $k\in\{1,\ldots\, d\}$, with $p = \frac{2d}{d-2}$ for $d \ge 3$ and $p = 2$ for $d=2$, we have 
\begin{equation}
\label{eq:vquant}
\|v^\eps_k - M^{-1}e_k\|_{L^p(\Omega)} \le C \kappa_\eta.
\end{equation}
\end{itemize}
Note that $\sigma_\ep$ and $\kappa_\eta=\eps/\sigma_\eps$ are defined by \eqref{eq:sigeps}.
\end{theorem}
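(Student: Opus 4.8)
The plan is to treat the three assertions via the rescaled cell problem \eqref{eq:etacell} and layer potential representations of $(\chi^\eta_k,\omega^\eta_k)$. The natural strategy is: obtain an energy bound for \eqref{eq:etacell}, translate it into \eqref{eq:vbdd} by scaling; then represent $\chi^\eta_k$ on the torus $\eta^{-1}\bT^d\setminus\ol T$ by the periodic Stokes single-layer potential with an (unknown) density supported on $\partial T$, and compare it with the whole-space local problem \eqref{eq:locprob}; finally extract \eqref{eq:gradvconv} and \eqref{eq:vquant} from the asymptotics of the density as $\eta\to 0$.

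\emph{Step 1 (energy estimate and \eqref{eq:vbdd}).} Testing \eqref{eq:rscell} with $v^\eps_k$ itself on $\Omega$ (using that $v^\eps_k\in H^1_0(\Omega^\eps)$, that it vanishes on the holes, and the incompressibility to kill the pressure term) gives $\|\nabla v^\eps_k\|_{L^2(\Omega)}^2 \le \sigma_\eps^{-2}\|v^\eps_k\|_{L^2(\Omega)}$, and a Poincar\'e inequality on the perforated domain — here one must be careful about the $\eta$-dependence of the Poincar\'e constant, which is exactly the origin of the factor $\kappa_\eta$ — closes the estimate to yield $\|\nabla v^\eps_k\|_{L^2(\Omega)}\le C\sigma_\eps^{-1}$. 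Equivalently, scaling back to \eqref{eq:etacell} and using the classical Stokes energy estimate on the exterior of $T$ in the large torus, $\|\nabla\chi^\eta_k\|_{L^2}^2\lesssim \eta^d\|\chi^\eta_k\|_{L^2}$; the Poincar\'e constant of $\eta^{-1}\bT^d\setminus\ol T$ behaves like $\eta^{-1}$ for $d\ge 3$ and $\eta^{-1}|\log\eta|^{1/2}$ for $d=2$, which after the rescaling in \eqref{eq:vqeps} produces precisely $\sigma_\eps^{-1}$.

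\emph{Step 2 (layer potential representation and comparison with the local problem).} Write $\chi^\eta_k = \mathcal S^\eta[\phi^\eta_k]$ using the periodic Stokes single-layer potential $\mathcal S^\eta$ on $\eta^{-1}\bT^d$, with density $\phi^\eta_k$ on $\partial T$ determined by the no-slip condition $\chi^\eta_k=0$ on $\partial T$ (plus the correct normalization of $\omega^\eta_k$); the solvability and the invertibility of the relevant boundary integral operator are supplied by Lemma~\ref{lem:Kmap}, which also defines $A_T$. As $\eta\to 0$ the periodic kernel converges, locally uniformly away from the singularity, to the whole-space Stokes (Oseen–Landau) kernel, so $\phi^\eta_k$ converges to the density $\phi^0_k$ of the solution $w_k$ of \eqref{eq:locprob}; the rate of this convergence is controlled by the difference of kernels, which is $O(\eta^{d-2})$ for $d\ge 3$ and $O(|\log\eta|^{-1})$ for $d=2$ — this is where $\kappa_\eta$ enters. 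The constant $M^{-1}e_k$ in \eqref{eq:vquant} is exactly the limit at infinity of $w_k$, and $M=A_T^{-1}$ (resp. $4\pi I$ for $d=2$) because the leading far-field behavior of the single-layer potential with density $\phi^0_k$ is governed by its total mass $\int_{\partial T}\phi^0_k$, which the boundary equation pins down in terms of $A_T$ and $e_k$.

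\emph{Step 3 (extracting the three conclusions).} For \eqref{eq:vquant}: from $\chi^\eta_k = \mathcal S^\eta[\phi^\eta_k]$ and the matching far-field constant one estimates $\|\chi^\eta_k - (\text{const})\|$ on the rescaled torus in the appropriate norm, then scales via \eqref{eq:vqeps}; the exponent $p=\tfrac{2d}{d-2}$ for $d\ge3$ is the Sobolev exponent that makes the scaling of $\|\nabla v^\eps_k\|_{L^2}\lesssim\sigma_\eps^{-1}$ and the Sobolev embedding combine into the stated $C\kappa_\eta$ bound (recall $\kappa_\eta=\eps/\sigma_\eps$), while $d=2$ needs the $|\log\eta|$ normalization already built into \eqref{eq:vqeps}. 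For \eqref{eq:gradvconv} in the critical setting ($\sigma_\eps\to\sigma_0\sim1$, so $v^\eps_k\to M^{-1}e_k$ strongly in $L^p$ but $\nabla v^\eps_k$ is only bounded in $L^2$ by \eqref{eq:vbdd}): one tests $\nabla v^\eps_k$ against a fixed $L^2$ field, integrates by parts onto $v^\eps_k$ and the equation \eqref{eq:rscell}, and uses that the right-hand side $\sigma_\eps^{-2}e_k$ is bounded while $v^\eps_k$ concentrates (its support shrinks, or more precisely $v^\eps_k - M^{-1}e_k\to 0$ strongly and the boundary/pressure contributions vanish), forcing the weak limit to be $0$.

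\emph{Main obstacle.} The delicate point is the uniform-in-$\eta$ control of the boundary integral operator and of the kernel difference in Step 2: one needs the inverse of the periodic Stokes boundary operator to be bounded uniformly as $\eta\to0$ (so that $\phi^\eta_k$ stays bounded and converges), and one needs a quantitative $O(\kappa_\eta^2)$-type estimate on the difference between the periodic and whole-space kernels on $\partial T$, together with correct handling of the $d=2$ logarithmic degeneracy where the Stokeslet grows at infinity and the periodic zero-mode must be subtracted. Getting the rates — rather than mere qualitative convergence as in \cite{Allaire-3} — is exactly the contribution here, and it hinges on that uniform invertibility.
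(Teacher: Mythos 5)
Your proposal reproduces the paper's overall philosophy (work on the rescaled torus $\eta^{-1}\bT^d\setminus\ol T$, represent the corrector by periodic layer potentials, use the $O(\eta^{d-2})$ --- resp.\ logarithmic --- proximity of the periodic kernel to the free-space one, and finish by scaling/tiling), but the step on which everything hinges is left unproven, and as formulated it would not go through. You posit $\chi^\eta_k=\cS^\eta[\phi^\eta_k]$ with the density fixed by the no-slip condition, and you yourself identify the uniform-in-$\eta$ invertibility of this boundary operator and the quantitative convergence of $\phi^\eta_k$ to the density of the local problem \eqref{eq:locprob} as the ``main obstacle''; that obstacle \emph{is} the theorem. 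Moreover the bare single-layer ansatz is not even consistent: the cell problem \eqref{eq:etacell} has the forcing $\eta^de_k$, and since $\Delta\cS^\eta_T[\phi]-\nabla\cQ^\eta_T[\phi]=-\eta^d\int_{\partial T}\phi$, the density must carry a prescribed total mass, and an additive constant vector must be added to the ansatz (compare the local-problem solutions $\cS_T[\phi^*_k]+a^*_k$; in $d=2$ the constant is unavoidable because of the logarithmic growth and the possible degeneracy of $A_{rT}$ noted in Remark \ref{rem:ATpositive}). So one must solve a constrained system for $(\phi^\eta_k,\text{const})$ and prove uniform bounds and an $O(\eta^{d-2})$ (resp.\ $O(|\log\eta|^{-1})$) comparison for it --- none of which is sketched. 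The paper avoids exactly this by writing $\chi^\eta_k=G^\eta_k+A_Te_k+\cD^\eta_T[\tilde g]+r^\eta_k$ (Lemma \ref{lem:chiomega}): the periodic fundamental solution absorbs the forcing, uniform invertibility of $-\frac12I+\cK^\eta_T=-\frac12I+\cK_T+\eta^{d-1}\cR^\eta_T$ is immediate from Lemma \ref{lem:Kper}, and the constant $A_Te_k=M^{-1}e_k$ together with the rate \eqref{eq:chiavg} drops out of the projection of the boundary data onto $\ker(-\frac12I+\cK_T^*)$ via \eqref{eq:Gperturb}.

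Two further points. First, your mechanism for the rate in Step 3 is off: the global bound $\|\nabla v^\eps_k\|_{L^2(\Omega)}\le C\sigma_\eps^{-1}$ combined with a Sobolev embedding on $\Omega$ only yields $O(\sigma_\eps^{-1})$, which is far larger than $\kappa_\eta=\eps\sigma_\eps^{-1}$; the correct argument (the paper's) is per $\eps$-cell: pin the cell average to $M^{-1}e_k$ within $O(\eta^{d-2})$ by the representation formula, control the oscillation by the \emph{scale-invariant} critical Sobolev inequality together with \eqref{eq:chiunif}, and sum over the $O(\eps^{-d})$ cells; the smallness comes from the cell-by-cell bookkeeping, not from the global gradient bound. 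Likewise, in Step 1 your first variant treats $v^\eps_k$ as an element of $H^1_0(\Omega^\eps)$, which it is not (it is periodic and nonzero on $\partial\Omega$); your second variant (energy estimate on the cell problem plus rescaling and cube counting) is the paper's proof and is fine. Second, on the positive side, your route to item (2) --- integrate by parts against compactly supported test fields, use the strong convergence $v^\eps_k\to M^{-1}e_k$ from item (3) and the uniform $L^2$ bound on the gradients, then conclude by density --- is a valid and simpler alternative to the paper's direct Riemann--Lebesgue computation, but it presupposes item (3), which is precisely where the gap above sits.
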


In particular, the result in item (3) above shows, $v^\eps_k$ converges in $L^p_{\rm loc}(\R^d)$ to the constant vector field $M^{-1}e_k$. A qualitative strong convergence result itself can be proved by weak compactness arguments as done in \cite{Allaire-3}. Notice that the rate given here is new and is a byproduct of the layer potential method developed in our paper.

\begin{theorem}[quantitative homogenization]
\label{thm:homerr}
We assume that Assumption \ref{assump:geo} holds. Assume further that, in each regime, the homogenized problem has a solution $u \in W^{2,\infty}(\Omega)$ and $p \in L^2_0(\Omega)$. We can find constants $C$ that depends only on $d$, $\Omega$ and $T$, such that
\begin{itemize}
\item[(1)] In the \emph{\underline{dilute super-critical}} setting,
\begin{equation}
  \label{eq:err_supc}
\big\|\frac{u^\eps}{\sigma^2_\eps} - u\big\|_{L^2(\Omega)}+\|p^\eps - p\|_{L^2(\Omega)} \le C(\kappa_\eta + \sqrt{\sigma_\eps})\|u\|_{W^{2,\infty}}.
\end{equation}
The limit $(u,p)$ is the unique solution to the following Darcy's law:
\begin{equation} 
\label{eq:hpdesupc}
u = M^{-1} (f-\nabla p)\quad\text{and}\quad  \nabla \cdot u = 0\quad\text{in}\quad \Omega, \qquad \text{and}\quad u\cdot n = 0\quad \text{in}\quad \partial \Omega.
\end{equation}
\item[(2)] In the \emph{\underline{critical setting}}, with $\sigma_\eps \to \sigma_0 \in (0,\infty)$, we have
\begin{equation}
\label{eq:err_c}
\big\|u^\eps - \tfrac{\sigma^2_\eps}{\sigma^2_0} v^\eps_k (Mu)^k\big\|_{H^1(\Omega)} + \big\|p^\eps - p -\tfrac{\sigma^2_\eps}{\sigma^2_0} q^\eps_k (Mu)^k\big\|_{L^2} \le C\left(\kappa_\eta + \left|1 - \sigma^2_\eps/\sigma^2_0\right|\right) \|u\|_{W^{2,\infty}}.
\end{equation}
The limit $(u,p)$ is the unique solution to the following Brinkman's law
\begin{equation}
\label{eq:hpdec}
-\Delta u + \nabla p + \frac{M}{\sigma^2_0} u = f \quad \text{and} \quad \nabla \cdot u = 0\quad \text{in}\quad \Omega,  \qquad \text{and}\quad  \quad u = 0\quad \text{in}\quad \partial \Omega.
\end{equation}
\item[(3)] In the \emph{\underline{sub-critical setting}}, we have
\begin{equation}
\label{eq:err_subc}
\|u^\eps - v^\eps_k (Mu)^k\|_{H^1(\Omega)} + \|p^\eps - p\|_{L^2} \le C\left(\kappa_\eta + \sigma^{-2}_\eps\right) \|u\|_{W^{2,\infty}}.
\end{equation}
The limit $(u,p)$ is the unique solution to the unperturbed Stokes problem
\begin{equation}
\label{eq:hpdesubc}
-\Delta u + \nabla p  = f \quad \text{and} \quad \nabla \cdot u = 0\quad \text{in} \quad\Omega, \qquad \text{and}\quad  u = 0\quad \text{in}\quad \partial \Omega.
\end{equation}
\end{itemize}
Note that $\sigma_\ep$ and $\kappa_\eta=\eps/\sigma_\eps$ are defined by \eqref{eq:sigeps}.
\end{theorem}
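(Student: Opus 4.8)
The plan is, in each regime, to compare the rescaled velocity $u^\eps$ with an explicit two–scale ansatz $U^\eps$ built from the correctors $v^\eps_k$ of Theorem~\ref{lem:cellqual}, to estimate $e^\eps:=u^\eps-U^\eps$ by an energy argument, and to recover the pressure estimate from the inf–sup (Bogovskii) inequality on $\Omega^\eps$. Write $v^\eps_k=M^{-1}e_k+\theta^\eps_k$, so that $\sum_k M^{-1}e_k(Mw)^k=w$ for every vector field $w$ and, by item~(3) of Theorem~\ref{lem:cellqual} together with H\"older (since $p\ge 2$), $\|\theta^\eps_k\|_{L^2(\Omega)}\lesssim\kappa_\eta$; I also use that $q^\eps_k$ oscillates at scale $\eps\eta$ with $\|q^\eps_k\|_{L^2(\Omega)}\lesssim\sigma^{-1}_\eps$ (hence $\|q^\eps_k\|_{H^{-1}(\Omega)}\lesssim\eps\eta\,\sigma^{-1}_\eps\le\kappa_\eta$), and the a priori bounds for \eqref{eq:hetstokes} coming from testing against $u^\eps$ and the Poincar\'e inequality on $\Omega^\eps$ (whose constant on functions vanishing on the holes is $\lesssim\min(\sigma_\eps,1)$).

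\emph{Critical and sub-critical regimes.} Here $u=0$ on $\partial\Omega$, so $U^\eps:=\gamma_\eps\,v^\eps_k(Mu)^k$, with $\gamma_\eps=\sigma^2_\eps/\sigma^2_0$ in the critical case and $\gamma_\eps=1$ in the sub-critical one, lies in $H^1_0(\Omega^\eps)$ (it vanishes on the holes because $v^\eps_k$ does, and on $\partial\Omega$ because $(Mu)^k$ does). A direct computation using \eqref{eq:rscell} and \eqref{eq:hpdec}, resp.\ \eqref{eq:hpdesubc}, gives in $\Omega^\eps$
\begin{equation*}
-\Delta e^\eps+\nabla\big(p^\eps-p-\gamma_\eps q^\eps_k(Mu)^k\big)=F^\eps,\qquad \nabla\cdot e^\eps=-\gamma_\eps\,\theta^\eps_k\!\cdot\!\nabla(Mu)^k,
\end{equation*}
where $F^\eps$ is a finite sum of terms of the schematic shapes $(1-\gamma_\eps)\Delta u$ (critical only), $\gamma_\eps\sigma^{-2}_\eps Mu$ (sub-critical only), $\gamma_\eps\nabla\theta^\eps_k\!\cdot\!\nabla(Mu)^k$, $\gamma_\eps\theta^\eps_k\Delta(Mu)^k$ and $\gamma_\eps q^\eps_k\nabla(Mu)^k$. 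Each has small $H^{-1}$ norm: the first two are $O(|1-\gamma_\eps|)$, resp.\ $O(\sigma^{-2}_\eps)$, times $\|u\|_{W^{2,\infty}}$; for $\nabla\theta^\eps_k\!\cdot\!\nabla(Mu)^k$ one integrates by parts to move the derivative off $\theta^\eps_k$ and is then reduced, together with $\theta^\eps_k\Delta(Mu)^k$, to $\lesssim\|\theta^\eps_k\|_{L^2}\|u\|_{W^{2,\infty}}\lesssim\kappa_\eta\|u\|_{W^{2,\infty}}$; and $q^\eps_k\nabla(Mu)^k$ is $\lesssim\|q^\eps_k\|_{H^{-1}}\|u\|_{W^{2,\infty}}\lesssim\kappa_\eta\|u\|_{W^{2,\infty}}$. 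Since $\|\nabla\cdot e^\eps\|_{L^2}\lesssim\gamma_\eps\kappa_\eta\|u\|_{W^{1,\infty}}$ too, one removes the divergence by a Bogovskii corrector $z^\eps\in H^1_0(\Omega^\eps)$ and tests against the divergence–free field $e^\eps-z^\eps$: the pressure drops out, and the energy inequality plus the Poincar\'e inequality on $\Omega^\eps$ give $\|u^\eps-U^\eps\|_{H^1}\lesssim(\kappa_\eta+|1-\gamma_\eps|)\|u\|_{W^{2,\infty}}$, resp.\ $\lesssim(\kappa_\eta+\sigma^{-2}_\eps)\|u\|_{W^{2,\infty}}$, which is \eqref{eq:err_c}, resp.\ \eqref{eq:err_subc}, for the velocity.

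\emph{Super-critical regime.} This is the delicate case, because $u\cdot n=0$ but $u\not\equiv0$ on $\partial\Omega$, so $v^\eps_k(Mu)^k\ne0$ on $\partial\Omega$ and a naive cut-off breaks incompressibility. Following \cite{WXZ22,BAO24}, write $u=\mathrm{curl}\,\Psi$ with $\Psi$ a vector potential whose tangential trace vanishes on $\partial\Omega$ (possible since $\nabla\cdot u=0$, $u\cdot n=0$, with $\|\Psi\|_{W^{m+1,\infty}}\lesssim\|u\|_{W^{m,\infty}}$), and set $u_\delta:=\mathrm{curl}(\chi_\delta\Psi)$ with $\chi_\delta$ equal to $1$ away from, and to $0$ on, the $\tfrac{\delta}{2}$–neighbourhood of $\partial\Omega$; then $u_\delta$ is divergence–free, supported away from $\partial\Omega$, $\|u-u_\delta\|_{L^2}\lesssim\delta^{1/2}\|u\|_{W^{1,\infty}}$, and $\|\nabla^m u_\delta\|_{L^\infty(\text{strip})}\lesssim\delta^{-m}\|u\|_{W^{m,\infty}}$ for $m\le2$. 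Take $U^\eps:=\sigma^2_\eps v^\eps_k(Mu_\delta)^k\in H^1_0(\Omega^\eps)$; \eqref{eq:rscell} and \eqref{eq:hpdesupc} give $-\Delta e^\eps+\nabla(p^\eps-p-\sigma^2_\eps q^\eps_k(Mu_\delta)^k)=F^\eps$ with $F^\eps$ a sum of $M(u-u_\delta)$, $\sigma^2_\eps\Delta u_\delta$, $\sigma^2_\eps\nabla\theta^\eps_k\!\cdot\!\nabla(Mu_\delta)^k$, $\sigma^2_\eps\theta^\eps_k\Delta(Mu_\delta)^k$ and $\sigma^2_\eps q^\eps_k\nabla(Mu_\delta)^k$, and $\nabla\cdot e^\eps=-\sigma^2_\eps\theta^\eps_k\!\cdot\!\nabla(Mu_\delta)^k$. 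After a Bogovskii correction, test against the divergence–free part $\hat e^\eps$ of $e^\eps$ and split every term into its part away from the $\delta$–strip — where $u_\delta=u$, the estimates of the previous step apply, and the $\sigma^2_\eps$ prefactor makes the contribution $\lesssim(\kappa_\eta+\sigma^2_\eps)\|u\|_{W^{2,\infty}}\|\nabla\hat e^\eps\|$ — and its part on the strip, where the $\delta^{-1},\delta^{-2}$ losses of the cut-off are offset by the one–dimensional Poincar\'e inequality $\|\hat e^\eps\|_{L^2(\text{strip})}\lesssim\delta\|\nabla\hat e^\eps\|_{L^2(\text{strip})}$ (valid as $\hat e^\eps\in H^1_0(\Omega)$) and by the fact that the cell–periodic field $\theta^\eps_k$ carries only an $O(\delta^{1/p})$ fraction of its $L^p$ mass inside the strip; the $q^\eps_k$–term is again handled through $\|q^\eps_k\|_{H^{-1}}\lesssim\kappa_\eta$ and $\|q^\eps_k\|_{L^2}\lesssim\sigma^{-1}_\eps$. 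The resulting strip contributions are a combination of positive and negative powers of $\delta$, weighted by $\sigma^2_\eps$ and $\kappa_\eta$, which the choice $\delta\sim\sigma_\eps$ balances, yielding $\|\nabla e^\eps\|_{L^2}\lesssim(\sigma_\eps\kappa_\eta+\sigma^{3/2}_\eps)\|u\|_{W^{2,\infty}}$; dividing by $\sigma^2_\eps$, using $\|e^\eps\|_{L^2}\lesssim\sigma_\eps\|\nabla e^\eps\|_{L^2}$, and adding $\|\sigma^{-2}_\eps U^\eps-u\|_{L^2}\le\|\theta^\eps_k(Mu)^k\|_{L^2}+\|u_\delta-u\|_{L^2}+\|\theta^\eps_k(M(u_\delta-u))^k\|_{L^2}\lesssim(\kappa_\eta+\delta^{1/2})\|u\|_{W^{1,\infty}}$ then gives \eqref{eq:err_supc} for the velocity.

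\emph{Pressure and the main obstacle.} In all three regimes the inf–sup inequality on $\Omega^\eps$ applied to the identity for $e^\eps$ gives $\|p^\eps-p-\gamma_\eps q^\eps_k(Mu)^k-c_\eps\|_{L^2}\lesssim\|\nabla e^\eps\|_{L^2}+\|F^\eps\|_{H^{-1}}$ for a suitable constant $c_\eps$ (with $\gamma_\eps=\sigma_\eps^2/\sigma_0^2,\,1,\,\sigma_\eps^2$ and $u$ replaced by $u_\delta$ in the super-critical case); the cell–pressure corrector $\tfrac{\sigma^2_\eps}{\sigma^2_0}q^\eps_k(Mu)^k$ is kept as in \eqref{eq:err_c} in the critical regime, and is of lower order in the other two by the bound on $\|q^\eps_k\|_{L^2(\Omega)}$, so one recovers $\|p^\eps-p\|_{L^2}$ with the stated rate. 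I expect the only genuine obstacle to be the super-critical boundary layer: constructing the incompressible cut-off $u_\delta$ and, above all, tuning $\delta\sim\sigma_\eps$ so that the cut-off losses are exactly compensated by the $\sigma^2_\eps$ prefactor and the strip Poincar\'e inequality — this is what produces the optimal $\sqrt{\sigma_\eps}$ rate. The rest (the algebraic identities for $e^\eps$, and the control of the $\eta$–dependence of the Poincar\'e and Bogovskii/inf–sup constants on $\Omega^\eps$) is routine once Theorem~\ref{lem:cellqual} is in hand.
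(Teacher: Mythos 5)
Your overall strategy (two--scale ansatz built from $v^\eps_k$, energy estimate, pressure via duality, and, in the super-critical case, an incompressible truncation of $u$ near $\partial\Omega$ balanced at scale $\delta\sim\sigma_\eps$) is close to the paper's in the critical and sub-critical regimes, and in the super-critical regime you take the alternative boundary-layer route (divergence-free cut-off $u_\delta=\mathrm{curl}(\chi_\delta\Psi)$ in the spirit of \cite{WXZ22,BAO24}, with ansatz $\sigma_\eps^2 v^\eps_k(Mu_\delta)^k$) rather than the paper's choice of keeping the ansatz $v^\eps_k(Mu)^k$ and lifting its nonzero trace by $H=\phi_t v^\eps_k(Mu)^k$ with the radial cut-off, whose normal alignment lets one exploit $u\cdot n=0$ directly. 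The paper itself notes that the curl-type construction could be used instead, and your balancing $\delta\sim\sigma_\eps$ reproduces the same $\sqrt{\sigma_\eps}$ as the paper's $t\sim\sigma_\eps$, so the route is legitimate in principle.

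There is, however, a genuine gap that runs through all three regimes: you derive the error equations on $\Omega^\eps$ as if $(v^\eps_k,q^\eps_k)$ satisfied \eqref{eq:rscell} there. They do not. By the construction \eqref{eq:Oepsdef} the holes in the $O(\eps)$-layer near $\partial\Omega$ are filled in $\Omega^\eps$, while $v^\eps_k,q^\eps_k$ are the globally periodic correctors, extended by zero in \emph{all} holes of $\eps\R^d_{\rm f}$, including the filled ones, which are interior to $\Omega^\eps$. Across those hole boundaries $-\Delta v^\eps_k+\nabla q^\eps_k$ produces conormal-jump surface terms, and inside the filled holes the right-hand side $\sigma_\eps^{-2}e_k$ is not matched; the correct equation in $\Omega^\eps$ is \eqref{eq:vqbdhole} with the extra distribution $s_{\eps,k}$ of \eqref{eq:sepsk}. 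Consequently every one of your error systems is missing the corresponding $\check s$-type term, and this term is not lower order: estimating it requires the pointwise stress bound \eqref{eq:vqstressbound} on the cell faces, and Lemma \ref{lem:sepsk-2} gives $|\langle s_{\eps,k},\varphi\rangle|\le C\sqrt{\eps}\,\sigma_\eps^{-1}\|\nabla\varphi\|_{L^2(\Omega_{(2\eps)})}$, i.e.\ a contribution comparable to (in the critical regime even larger than) $\kappa_\eta$. Without identifying and bounding this boundary-layer mismatch, the energy identities you test are simply not identities, and the claimed rates are unjustified.

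Two secondary points. First, you invoke a Bogovskii/inf--sup inequality \emph{on $\Omega^\eps$} and call its $(\eps,\eta)$-dependence routine; in fact the inf--sup constant of $\Omega^\eps$ degenerates like $\sigma_\eps$, and the paper's way around this (solve $\nabla\cdot w=\hat p$ on the unperforated $\Omega$ and apply the Tartar--Allaire restriction operator with the explicit bound \eqref{eq:restrbdd}, then feed the loss into the $\sigma$-weighted estimate \eqref{eq:benergy}) is precisely the bookkeeping that makes the super-critical pressure and divergence-correction steps close; your sketch hides this where it matters most. Second, your shortcut $\|q^\eps_k\|_{H^{-1}(\Omega)}\lesssim\eps\eta\,\sigma_\eps^{-1}\le\kappa_\eta$ is not correct in general: the cell averages of $q^\eps_k$ are only $O(\eps^{-1}\eta^{d-1})$, and the sharp statement is Lemma \ref{lem:vq}(2), which carries the additional term $\sigma_\eps^{-1}\eta^{d/2}\|\phi\|_{L^2}$; in the dilute super-critical regime this term need not be dominated by $\kappa_\eta$, so the $q^\eps_k$-contributions must be handled with the full two-term bound (or, as in the paper, kept in $L^2$ with the compensating $\sigma_\eps^2$ prefactor).
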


The convergence rates in Theorem \ref{thm:homerr} were given by Allaire in \cite{Allaire91-1,Allaire91-2} in all three cases, except that in the dilute super-critical setting, the trace of $u$ on the outer boundary $\partial \Omega$ was implicitly assumed to be zero. In Section \ref{sec:app:sup} we establish the convergence rate for the homogenization of \eqref{eq:hetstokes} in the dilute super-critical case in the general setting, i.e. without assuming $u = 0$ on $\partial \Omega$. This turns out to be a very non-trivial task, as is already the case in the classical setting with $\eta = 1$; see \cite{MR4432951} for the discussion of the problem and for the proof in the classical setting.

In the dilute super-critical setting, the additional small parameter $\eta$ only makes the problem even more technical; see in particular the comment below \eqref{eq:sepsk}. Thanks to the estimates of $(v^\eps_k,q^\eps_k)$ achieved in our paper, and in particular the $L^\infty$ estimate \eqref{eq:vqstressbound} for the stress, it is possible to follow and generalize the argument of Shen \cite{MR4432951} to the dilute setting. It turns out, however, an easier approach can be followed using the so-called \emph{radial} cut-off function proposed by Wang, Xu and Zhang \cite{WXZ22}. We follow this second approach. 

\begin{remark}[an explicit representation of Darcy's conductivity matrix and Brinkman's term]
As a byproduct of the analysis of the layer potentials developed in this paper, we obtain explicit formulas for the conductivity matrix in Darcy's regime. 
The basis $\{\phi^*_j\}_{j=1,\dots,d}$, found in \eqref{eq:L2dec}, is closely related to the so-called \emph{local problems} introduced by Allaire in \cite{Allaire91-1}. Indeed, for each $k = 1,\dots,d$, let $(u_k,p_k)$ be $(\cS_T[\phi^*_k]+a^*_k,\cQ_T[\phi^*_k])$ in $T_+ = \R^d\setminus \ol T$, then they satisfy (for $d\ge 3$)
\begin{equation}
\left\{
\begin{aligned}
&\Delta u_k - \nabla p_k = 0, \qquad &\text{in } \R^d\setminus \ol T,\\
&\nabla \cdot u_k = 0, \qquad &\text{in } \R^d\setminus \ol T,\\
&u_k = 0, \qquad &\text{in } \partial T,\\
&u_k \to a^*_k,  \qquad &\text{as } |x| \to \infty. 
\end{aligned}
\right.
\end{equation}
For $d=2$, the last line should be changed to
\begin{equation*}
u_k \sim \Gamma_k(x) + a^*_k, \qquad \text{as } \, |x| \to \infty.
\end{equation*}
Compare the equations above with those in the local problem of \cite[Eqn. (2.4)]{Allaire-3}, or \eqref{eq:locprob} above. We can check that, for $d\ge 3$, the solutions to the local problem are given by the functions $w_k := m_{ki} u_i$ and $q_k := m_{ki} p_i$, where the numbers $m_{ij}$ are the components of $M$. Notice that $M=A_T^{-1}$, where $A_T$ is defined by \eqref{eq:Adef}. We check that
\begin{equation}
\label{eq:mjk}
\begin{aligned}
\int_{\R^d\setminus \ol T} \nabla w_i : \nabla w_k &= m_{is}m_{k\ell} \int_{\R^d\setminus \ol T} \nabla u_s : \nabla u_\ell \\
&= m_{is}m_{k\ell} \int_{\R^d\setminus \ol T} \nabla \cS_T[\phi^*_s] : \nabla \cS_T[\phi^*_\ell] \\
&= -m_{is}m_{k\ell} \int_{\partial T} \cS_T[\phi^*_s] \cdot \frac{\partial \cS_T[\phi^*_\ell]}{\partial \nu}\Big\vert_+  = m_{is} m_{k\ell} a^*_s\cdot e_\ell = m_{ik}.
\end{aligned}
\end{equation}
where $\mathcal S_T$ is the single-layer potential defined by \eqref{eq:cST}. 
This shows that the definition $M := A^{-1}_T$, defined by quantities coming from layer potential theory, agrees with that of Allaire \cite{Allaire-3}, defined using the above `local problems'.
\end{remark}

\subsection{Outline of the paper}

In section \ref{sec.layerpot}, we review the layer potentials for the Stokes system. Section \ref{sec.mapping} focuses on the mapping properties of the layer potentials. In section \ref{sec.twoscale} we introduce and estimate the two-scale cell correctors. Theorem \ref{lem:cellqual} is proved here. Section \ref{sec:app:sup} addresses the error estimates for the homogenization in the dilute super-critical regime, which proves part of Theorem \ref{thm:homerr}. Finally there are four appendices. In appendix \ref{app.a} we review some standard analysis tools, such as the extension and restriction operators. In appendix \ref{app.b}, we carry out basic energy estimates. In appendix \ref{sec.app.qual}, we review the use of the unified two-cell correctors for the qualitative convergence. In appendix \ref{sec.app.quant}, we prove the error estimates in the dilute critical and sub-critical cases, which proves the other cases of Theorem \ref{thm:homerr}.

\subsection{Geometric setup and main assumptions}
\label{sec.geomassu}

We first describe the geometric setup of the perforated domain $\Omega^\ep$; see Figure \ref{fig.porous}. Let $T\subset Q_1$ be a set satisfying the following conditions.
\begin{itemize}
  \item[{\upshape(A1)}] $T$ is an open set and satisfies $B_{1/16} \subset T \subset B_{3/8}$. Roughly speaking, $T$ is relatively centered at the origin and is well separated from $\partial Q_1$. 
	\item[{\upshape(A2)}] $T_+:= \R^d\setminus \ol T$ is connected; $\partial T$ is of class $C^{1,\alpha}$ for some $\alpha \in (0,1)$ and, for simplicity, $T$ is simply connected.
\end{itemize}

Assumption (A2) simplifies the analysis of single- and double-layer potential operators. The fact that $\partial T$ is of class $C^{1,\alpha}$ for some $\alpha \in (0,1)$ implies that $\mathcal K_T-\mathcal K_T^*$ is a compact operator; see Section \ref{sec.mapping}. The fact that $T$ is simply connected implies in particular that that there is just one hole and that $\partial T$ is connected; see Remark \ref{rem.withoutA3}.

Fix a parameter $\eta \in (0,1]$, we define the perforated unit cell to be $Y_{\rm f} = \ol Q_1\setminus (\eta \ol T)$; this set is the building block for $\Omega^\eps$. The subscript $f$ denotes the fluid part of the domain. By the assumptions above, both $Y_{\rm f}$ and the removed set $\eta T$ are connected. In general, we can allow $T$ to have multiple connected components as long as $Y_{\rm f}$ remains connected. For simplicity of the presentation, however, we impose (A2) above. We then take copies of $Y_{\rm f}$, distribute them periodically and glue them together; the resulting set is $\cup_{z\in \Z^d} (z+Y_{\rm f})$ and is denoted by $\R^d_{\rm f}$. Clearly, 
\begin{equation*}
  \R^d_{\rm f} = \R^d \setminus \cup_{z\in \Z^d} \left(z + \eta \ol T\right).
\end{equation*}
We emphasize that $Y_{\rm f}$ and $\R^d_{\rm f}$ depend on $\eta$, as $\eta \ol T$ is the model hole removed in those sets. Given $\eps \in (0,1)$, we rescale $\R^d_{\rm f}$ to $\eps \R^d_{\rm f}$ defined by 
\begin{equation*}
  \eps \R^d_{\rm f} := \R^d\setminus \cup_{z\in \Z^d} \;\eps\left(z + \eta \ol T\right)
\end{equation*}
which is an $\eps$-periodic perforated domain; the array of holes $\cup_{z\in \Z^d} \eps(z+\eta T)$ is denoted by $\eps \R^d_{\rm s}$. Fix an open set $\Omega$ in $\R^d$. We assume that $\Omega$ satisfies the following assumptions.
\begin{itemize}
  \item[{\upshape(A3)}] $\Omega$ is bounded open and simply connected, and the boundary $\partial \Omega$ is of class $C^2$.
\end{itemize}
It would be natural to define the perforated domain $\Omega^\eps$ by $\eps\R^d_{\rm f} \cap \Omega$. However, the small holes in $\eps\R^d_{\rm f}$ may cut $\partial \Omega$ in very irregular ways (possible cusps\ldots). To avoid such situations, we modify the definition by removing (i.e., filling) the holes near $\partial \Omega$. More precisely, for any small number $t>0$, let $\Omega_{(t)}$ denote the set $\{x\in \Omega\,:\, d(x,\partial \Omega) > t\}$. We define
\begin{equation}
  \Omega^{\eps}:=\Omega^{\eps,\eta} := \Omega\setminus \cup_{z\in \mathcal{I}_\eps} \;\eps(z + \eta \ol T),
  \label{eq:Oepsdef}
\end{equation}
where $\mathcal{I}_\eps$ is an index set defined by $\mathcal{I}_\eps := \{z\in \Z^d \,:\, \eps(z+Q_1)\subset \Omega_{(\eps)}\}$. By this construction, the holes in $\Omega^\eps$ never cut $\partial \Omega$; in fact, there is no hole in the layer $\Omega^\eps\cap\Omega_{(\eps)}$ which has a width of order $O(\eps)$. We define $K_\eps := \Omega\setminus \cup_{z\in \mathcal{I}_\eps} \; \eps\left(z+\ol Q_1\right)$; this set contains the layer and serves as a buffer area between $\partial \Omega$ and the $\eps$-cells containing holes.    

To summarize the geometric setup of the perforated domain, we group the assumptions above and invoke throughout the paper the following assumption:

\vspace{-.5cm}
\begin{quote}
\begin{assumption}
  \label{assump:geo}
For each $\eps \in (0,1)$ and $\eta \in (0,1]$, let $T$ and $\Omega$ satisfy the assumptions (A1), (A2) and (A3), and let $\Omega^\eps := \Omega^{\eps,\eta}$ be constructed by \eqref{eq:Oepsdef}.
\end{assumption}
\end{quote}

This is the standing assumption in the paper and, unless explicitly stated otherwise, we always assume Assumption \ref{assump:geo}.

\subsection{Notations} The unit vectors in the Euclidean coordinate system for $\R^d$ are denoted by $e_1,\dots,e_d$. A vector $\xi$ in $\R^d$ is then written as $\xi = (\xi^j)_j$, or $(\xi^1,\xi^2,\dots,\xi^d)$, with $\xi^j$ being the $j$-th coordinate component.
For two matrices $A = (a_{ij})$ and $B = (b_{ij})$ of the same size, their Frobenius inner product is denoted by $A:B$ and defined as $A:B = a_{ij} b_{ij}$. In this definition and in the rest of the paper, the summation convention for repeated indices is assumed unless otherwise stated.
The ball in $\R^d$ with center $x\in \R^d$ and radius $r>0$ is denoted by $B_r(x)$, and by $B_r$ if $x=0$. The unit open cube of $\R^d$ is denoted by $Q_1$ and is the set $(-\frac12,\frac12)^d$. The unit flat torus is denoted by $\bT^d$ and defined by $\R^d/\Z^d$; it can be represented by $\ol Q_1$ with each pair of its opposite faces identified as one. Given a set $E\subset \R^d$ and a positive number $s > 0$, the scaled set $sE$ is defined by $\{sx\,:\, x\in E\}$. The scaled flat torus $s\bT^d$ is defined similarly. 
For a vector field $\xi = (\xi^1,\xi^2,\xi^3) = (\xi^j)$, its derivative $\nabla \xi$ is denoted by the matrix $(\partial_i \xi^j)$. The multiplication $(\nabla \xi)b$, where $b$ is a vector, is computed as $(\partial_i \xi^j)b^j$. 

For a general Hilbert space $H$, the bracket $\langle \cdot,\cdot\rangle_H$ denotes the inner product on this space, and the subscript $H$ is often omitted if it is clear from the context. Let $H'$ be the dual space of $H$, the bracket $\langle\cdot,\cdot\rangle_{H',H}$ denotes the standard dual pairing. For a function $f$ defined in a domain $E$, the notation $\langle f\rangle_E$ stands for the averaged value $|E|^{-1}\int_E f$. 

We mainly work with the standard $L^p(E)$ space and Sobolev spaces $W^{k,p}(E)$ where $E\subset \R^d$ is an open set, namely, $\Omega$ or $\Omega^\eps$. Note, however, we do not distinguish vector fields from scalar ones in the notation for functional spaces. This information should be read from the context. $W^{1,p}(E)$ is simply written as $H^1(E)$, and $H^1_0(E)$ denotes the subspace in which functions have vanishing trace at $\partial E$. 

\section{Layer potentials for the Stokes system}
\label{sec.layerpot}

Here, we state some results on layer potentials associated to the Stokes system. In particular, we choose to study the double-layer potential proposed in \cite{FabKenVer}, which is defined by a particular choice of conormal derivative that is different from the physical one used in Ladyzhenskaya \cite{Ladyzhenskaya}. The main advantage of our choice is the eigenspace of the so-called Neumann-Poincar\'e operator associated to the layer potential operators has a simpler structure.
We also introduce the corresponding periodic layer potentials, and study their mapping properties. Those results on layer potentials will be the main tool for the asymptotic analysis of rescaled problems.

\subsection{Layer potential operators}

The fundamental solution pair for the Stokes system in the free-space $\R^d$ is given by $(\Gamma,\Theta)$, where $\Gamma = (\Gamma_k^j)_{kj}$ is the Kelvin matrix function, $\Theta = (\theta_k)_k$ is the pressure field vector. They are defined by the formulas: 
\begin{equation}
\label{eq:Gammak}
\Gamma_{k}^j(x)= \begin{cases}
\displaystyle -\frac{1}{2d(d-2)\varpi_d} \left[\frac{\delta_{jk}}{|x|^{d-2}} +(d-2)\frac{x^j x^k}{|x|^d}\right] \qquad & d\ge 3,\\
\displaystyle \frac{1}{4\pi}\delta_{jk}\log|x| - \frac{1}{4\pi} \frac{x^j x^k}{|x|^2} \qquad &d = 2,
\end{cases}
\end{equation}
and
\begin{equation}
\label{eq:thetak}
\theta_k(x) = -\frac{x^k}{d\varpi_d |x|^d}, \qquad d\ge 2.
\end{equation}
Here, $\delta_{ij}$ is the Kronecker's $\delta$-symbol, and $\varpi_d$ is the volume of the unit ball in $\R^d$. For each $k=1,2,\dots,d$, the velocity field $\Gamma_k = (\Gamma_k^1,\Gamma_k^2,\dots,\Gamma_k^d)$ formed by the $k$th row of $\Gamma$, together with the pressure field $\theta_k$ formed by the $k$th component of $\Theta$, solve the system 
\begin{equation}
\label{eq:fundmental1}
\Delta \Gamma_k(x) - \nabla \theta_k(x) = \delta_0(x) e_k \quad 
\text{and}\quad \nabla \cdot \Gamma_k(x) = 0, \qquad \text{ in } \R^d.
\end{equation}
Here, $\delta_0(x)$ denotes the Dirac distribution with mass at $0$. Let $\Gamma_\Delta$ be the fundamental solution to the scalar Laplace equation, i.e., solving $-\Delta \Gamma_\Delta = \delta_0$ with proper decay condition at infinity. In fact, $\Gamma_\Delta$ is given by $-\frac1{2\pi}\log|x|$ for $d=2$ and by $\frac{1}{d(d-2)\varpi_d}|x|^{2-d}$ for $d\ge 3$. By direct computations we check that $\theta_k = \partial_k \Gamma_\Delta$. 

Define $\Gamma_k(x,y) := \Gamma_k(x-y)$ and $\theta_k(x,y) := \theta_k(x-y)$, for $x, y \in \R^d$ and $x\ne y$. It is easy to check that 
 \begin{equation}\label{eq:fundamentaly}
\Delta_x \Gamma_k(x,y) - \nabla_x \theta_k(x,y) = \delta_y(x) {e}_k \quad\text{and}\quad
\nabla_x \cdot \Gamma_k(x,y) = 0, \qquad \text{in } \R^d.
\end{equation}
In other words, the family $(\Gamma_k(x,y),\theta_k(x,y))$, $k=1,\dots,d$, is the family of fundamental solutions to the Stokes problem with a singularity at $y$.

Let $\phi \in L^2(\partial T)$. The velocity field of the single-layer potential with momentum $\phi$, associated to $\partial T$, is defined by
\begin{equation}
\label{eq:cST}
\cS_T[\phi](x) = \int_{\partial T} \Gamma_k(x-y)\phi^k(y) \,d\sigma_y,  \qquad x \in \R^d\setminus \partial T.
\end{equation}
Recall that summation over $k$ is assumed here and the right hand sides defines a vector. 
We comment on the notation. We may think of $\Gamma$ as vector field valued tensor, and the operator between $\Gamma$ and $\phi$ yields a vector, i.e., $\Gamma(\phi) = \sum_k \Gamma_k \phi^k$. We may think $\Theta$ as a scalar valued tensor, and $\Theta(\phi)$ is computed as $\sum_{k} \theta_k \phi^k$.
The pressure field of the single-layer potential is defined by
\begin{equation}
\label{eq:cQT}
\cQ_T[\phi](x) = \int_{\partial T} \theta_k(x-y) \phi^k(y) \,d\sigma_y, \qquad x\in \R^d\setminus \partial T.
\end{equation}
Note that the definition of $\cS_T[\phi](x)$ makes sense also for $x \in \partial T$; in contrast, $\cQ_T[\phi](x)$ is not defined for $x\in \partial T$ unless we use take the principal value of the integral in \eqref{eq:cQT}. Direct computation shows that $(\cS_T[\phi],\cQ_T[\phi])$ satisfies the homogeneous Stokes system in $T$ and in $T_+$.

To define the double-layer potential, we need to choose a conormal derivative, associated to a pair $(u,p)$ of velocity field $u$ and pressure field $p$, on any surface contained in the domain of fluid flow. In this paper, we find it is most convenient to use the one proposed in \cite{FabKenVer}, defined by
\begin{equation}
\label{eq:conorm}
\frac{\partial[u,p]}{\partial \nu} := -pN + (\nabla u(x))N = (-p N^i + N^j\partial_j u^i).
\end{equation}
Here, $N$ is the unit outer normal field along a chosen surface in the region of fluid flow. We emphasize that this choice of conormal derivative is different from the physical one, which reads
\begin{equation}
\label{eq:conormF}
\frac{\partial[u,q]}{\partial \rho} := -pN + 2\mathbb{D}(u) N, \qquad \mathbb{D}(u) = \frac12(\nabla u + (\nabla u)^{\rm T}).
\end{equation}
In fluid mechanics, $\frac{\partial[u,p]}{\partial \rho}$ is the stress tensor imposed by the fluid flow on the surface. In comparison, $\frac{\partial [u,p]}{\partial \nu}$ is an artificial modification of the physically meaningful conormal derivative. The reason for our choice is the following: it yields a Neumann-Poincar\'e operator whose eigenspace associated to the eigenvalue $\frac12$ is simpler. Using integration by parts, we verify that, for any pair $(u,p)$ of velocity and pressure field, and for any velocity field $v$, with sufficient regularity, the following Green's identity holds:
\begin{equation}
\label{eq:greenid1}
 \int_{\partial T} \frac{\partial[u,q]}{\partial \nu_y} \cdot v  = \int_{T} (\Delta u - \nabla q) \cdot v - q(\nabla\cdot v) +  \int_T \nabla u : \nabla v.
\end{equation}
Applying this identity to two pairs $(u,q)$ and $(v,p)$, with $u$ and $v$ being solenoidal, we also obtain the second Green's identity:
\begin{equation}
\label{eq:greenid2}
\int_{T} (\Delta u - \nabla q) \cdot v - (\Delta v - \nabla p) \cdot u 
= \int_{\partial T} \left[\frac{\partial [u,q]}{\partial \nu}\cdot v  - \frac{\partial [v,p]}{\partial \nu} \cdot u \right].
\end{equation}

In view of those Green identities and in view of \eqref{eq:fundamentaly}, we define the velocity field of the double-layer potential, associated to the momentum $\phi \in L^2(\partial T)$ on $\partial T$, by
\begin{equation}
\label{eq:cDT}
\cD_T[\phi](x) = \int_{\partial T} \frac{\partial[\Gamma_k,\theta_k](y-x)}{\partial \nu_y}  \phi^k(y)\,dy, \qquad x\in \R^d \setminus \partial T.
\end{equation}
Note that, $(\Gamma_k(y-x),\theta_k(y-x))$ satisfies \eqref{eq:fundamentaly} which has a singularity at the point $x$. In the computation of $\partial/\partial \nu_y$, derivatives are taken with respect to the $y$-variable. For the associated pressure field, define, $x \in \R^d\setminus \partial T$,
\begin{equation}
\label{eq:cPT}
\begin{aligned}
\cP_T[\phi](x) &= \int_{\partial T} N^\ell_y \frac{\partial \theta_k(x-y)}{\partial y_\ell} \phi^k(y)\, dy\\
&= \frac{1}{d\varpi_d} \int_{\partial T} N^\ell_y \left(\frac{-\delta_{k\ell}}{|x-y|^d} + \frac{d(x-y)^k(x-y)^\ell}{|x-y|^{d+2}}\right)\phi^k(y)\, dy.
\end{aligned}
\end{equation}
We verify that $(\cD_T[\phi],\cP_T[\phi])$ solves the homogeneous Stokes system in $T$ and in $T_+$, i.e.,
\begin{equation*}
\left(\Delta_x \cD_T[\phi] - \nabla_x \cP_T[\phi]\right)(x) = 0, \qquad x\in \R^d\setminus \partial T.
\end{equation*}
 If we try to extend the definitions of $\cS_T$, $\cQ_T$, $\cD_T$ and $\cP_T$ to $x \in \partial T$, we can check that $\cS_T[\phi]$ is well defined, but not the other operators. In fact, $\cQ_T$ involves an integral kernel of order $-(d-1)$, $\cP_T$ has an integral kernel of order $-d$, even on smooth domains. For $\cD_T[\phi]$, the integral kernel is:
\begin{equation}
\label{eq:Kij}
\begin{aligned}
 K_{jk}(x,y) = & -\theta_k(y-x) N_y^j + N^\ell_y \partial_{y_\ell} (\Gamma_k^j(y-x))\\
 = & \frac{ N_y^k(x-y)^j-(x-y)^kN_y^j}{2d\varpi_d|x-y|^d}- \frac{1}{2\varpi_d} \frac{\langle N_y, x-y\rangle (x-y)^j (x-y)^k}{|x-y|^{d+2}}\\
 &\qquad - \frac{1}{2d\varpi_d} \frac{\langle N_y,x-y\rangle \delta_{jk}}{|x-y|^d}.
\end{aligned}
\end{equation}
For $x,y \in \partial T$ with $x\ne y$, the function $K_{jk}$ can be viewed as an integral kernel of order $-(d-1)$. In fact, for $\partial T$ of class $C^{1,\alpha}$, using
\begin{equation}
\label{eq:C1alpha}
|\langle N_y, x-y\rangle| \le C|x-y|^{1+\alpha}, \qquad |N_x - N_y| \le C|x-y|^\alpha,
\end{equation}
we see that the last two terms in the expression of $K_{jk}$ contributes to a weakly singular integral operator. The first term, however, results in a singular integral operator. By the standard but deep theory of Coifman, McIntosh and Meyer \cite{CMcIM82},  
\begin{equation}
\label{eq:cKT}
(\cK_T[\phi])^k(x) := \pv \int_{\partial T} K_{jk}(x,y) \phi^j(y) dy, \qquad x \in \partial T
\end{equation}
defines a bounded linear operator from $L^2(\partial T)$ to $L^2(\partial T)$, and $\cK_T$ (and, equally often, its adjoint operator) is referred to as the Neumann-Poincar\'e operator.

\subsection{Jump relations} The layer-potentials are very useful in solving boundary value problems for the homogeneous Stokes system, because one can derive trace formulas for them. For a function (vector-valued or scalar-valued) $F$ that is defined in $T_- := T$ and/or in $T_+$, we denote its trace from $T_-$ on $\partial T$ by $F\rvert_-$, and denote its trace from $T_+$ on $\partial T$ by $F\rvert_+$: 
\begin{equation}
  F\rvert_\pm (x) = \lim_{t\to 0+} F(x \pm t N_x), \qquad x \in \partial T.
\end{equation}
Here, $N_x$ is the unit outer normal vector at $x$. 

For the double-layer velocity field $\cD_T[\phi]$, we have
\begin{equation}
\label{eq:trcD}
 \cD_T [\phi]\big\rvert_\pm(x) = \lim_{t\to 0+} \cD_T [\phi](x \pm tN_x) = \left(\mp \frac12 I + \cK_T \right)[\phi](x),\qquad x\in\partial T.
\end{equation}

For the derivatives of the single-layer velocity field, i.e. $\nabla \cS_T[\phi]$, and for the single-layer pressure field $\cQ_T[\phi]$, we have 
\begin{equation}
\label{eq:traceu}
\begin{aligned}
\partial_j (\cS_T[\phi])^i \big\rvert_\pm(x) &= \pm \frac{1}{2} \left\{N_x^j \phi^i(x) - N^j_x N^i_x \langle N_x, \phi(x)\rangle\right\} + \pv \int_{\partial T} \partial_j \Gamma^i_k(x-y) \phi,\\
\cQ_T[\phi]\big\rvert_\pm (x) &= \mp \frac12 \langle N_x,\phi(x)\rangle + \pv \int_{\partial T} \theta_k(x-y) \phi^k(y).
\end{aligned}
\end{equation}
Combining the terms above according to our choice \eqref{eq:conorm} of conormal derivative, we check that 
\begin{equation}
\label{eq:trcS}
\frac{\partial[\cS_T[\phi],\cQ_T[\phi]]}{\partial \nu} \Big\rvert_\pm(x) = \left(\pm \frac12 I + \cK_T^* \right)[\phi](x).
\end{equation}
Here $\cK_T^*$ is defined by
\begin{equation}
\label{eq:cKstarT}
(\cK^*_T[\phi])^i(x) = \pv \int_{\partial T} K^*_{ik}(x,y)\phi^k(y) dy,
\end{equation}
and the integration kernel $K^*_{ik}$ is given by
\begin{equation*}
K^*_{ik}(x,y) = -\theta_k(x-y)N^i_x + N^j_x\partial_j \Gamma^i_k(x-y).
\end{equation*}
We refer to \cite{FabKenVer} for those formulas.

Direct computation shows that $K^*_{ik}(x,y) = K_{ki}(y,x)$, so $\cK^*_T$ is the adjoint operator of $\cK_T$, and $\cK^*_T$ is a non-compact bounded linear operator on $L^2(\partial T)$. We then compute that
\begin{equation}\label{e.KK*}
\begin{aligned}
(K^*_{ik}-K_{ik})(x,y) = &\frac{1}{2\varpi_d} \frac{\langle N_x+N_y, x-y\rangle (x-y)^i (x-y)^k}{|x-y|^{d+2}} + \frac{1}{2d\varpi_d} \frac{\langle N_x+N_y,x-y\rangle \delta_{ik}}{|x-y|^d} \\
&+ \frac{(N_x - N_y)^k(x-y)^i- (N_x-N_y)^i(x-y)^k}{2d\varpi_d |x-y|^d}.
\end{aligned}
\end{equation}
In view of \eqref{eq:C1alpha}, the integral operator $\cK^*_T - \cK_T$ is compact on $L^2(\partial T)$. This fact plays an important role in the study of those layer potential operators in \cite{FabKenVer}.

\section{Mapping properties of the layer potentials}
\label{sec.mapping}
In this section, we study the mapping properties of some operators related to the single- and double-layer potentials, and establish layer potential representation formulas for the cell problem \eqref{eq:etacell} that play an essential role in the homogenization of \eqref{eq:hetstokes}. Since the cell problem is a Dirichlet boundary value problem, we focus on the double-layer potential representation of the solution, in particular, on the mapping properties of the corresponding Neumann-Poincar\'e operator $\cK_T$ and its adjoint $\cK_T^*$. 

\subsection{Free-space Stokes potentials}

In the sequel, $L^2_0(\partial T)$ denotes the subspace of $L^2(\partial T)$ consisting of mean-zero functions on $\partial T$. We have the following important results:

\begin{lemma}\label{lem:Kmap} Let $d\ge 2$. Then the following results hold.
\begin{itemize}
\item[(1)] The operator $-\frac12 I + \cK^*_T$ has range in $L^2_0(\partial T)$, and the restricted operator $-\frac12 I+ \cK^*_T : L^2_0(\partial T) \to L^2_0(\partial T)$ is a bijection.

\item[(2)] The operators $-\frac12 I + \cK^*_T$ and $-\frac12 I + \cK_T$ both have closed ranges in $L^2(\partial T)$, and both have kernels of dimension $d$.

\item[(3)] The space $\ker(-\frac12 I + \cK_T)$ has basis $\{e_\ell \,:\, 1\le \ell\le d\}$. There is a basis $\phi_1,\dots,\phi_d$ for the space $\ker(-\frac12 I + \cK^*_T)$ and there are $d$ vectors $a_1,\dots,a_d$ in $\R^d$, such that 
\begin{equation}
\label{eq:phij}
-\cS_T[\phi_j](x) = a_j \; \text{in } \ol T, \qquad \int_{\partial T} \phi_j =  e_j.
\end{equation}
Moreover, the matrix $A_T$ defined by
\begin{equation}
\label{eq:Adef}
A_T = \begin{bmatrix}
a_1 & a_2 & \cdots & a_d
\end{bmatrix},
\end{equation}
where the $a_j$'s are written as column vectors, is symmetric for $d\ge 2$; for $d\ge 3$, $A_T$ is also positive-definite.

\item[(4)] The following direct-sum decompositions hold:
\begin{equation}
\label{eq:L2dec}
\begin{aligned}
L^2(\partial T) &= \ran (-\tfrac12 I + \cK_T^*) \oplus \ker(-\tfrac12 I + \cK_T^*),\\
L^2(\partial T) &= \ran (-\tfrac12 I + \cK_T) \oplus \ker(-\tfrac12 I + \cK_T).
\end{aligned}
\end{equation}
\end{itemize}
\end{lemma}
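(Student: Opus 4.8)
The plan is to combine classical single- and double-layer potential theory for the Stokes system (in the spirit of Fabes--Kenig--Verchota) with the fact, already noted in the excerpt, that $\cK_T - \cK_T^*$ is compact on $L^2(\partial T)$ because $\partial T$ is of class $C^{1,\alpha}$. This compactness is the engine: it means $-\tfrac12 I + \cK_T^*$ and $-\tfrac12 I + \cK_T$ differ from each other (and likewise $\tfrac12 I+\cK_T^*$, etc.) by a compact perturbation, so all four of $\pm\tfrac12 I + \cK_T$, $\pm\tfrac12 I + \cK_T^*$ are Fredholm of index zero. Fredholmness immediately gives closed range, and index zero gives that the kernel and cokernel have equal (finite) dimension; together with the adjoint relation $\cK_T^* = (\cK_T)^*$ this forces $\dim\ker(-\tfrac12 I + \cK_T) = \dim\ker(-\tfrac12 I + \cK_T^*)$ and $\ran(-\tfrac12 I + \cK_T^*)^{\perp} = \ker(-\tfrac12 I + \cK_T)$, which is most of part (2) and the abstract content of part (4). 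The direct-sum decompositions in \eqref{eq:L2dec} then follow from closed range plus the Fredholm alternative: $L^2(\partial T) = \ran(-\tfrac12 I + \cK_T^*) \oplus \ker\big((-\tfrac12 I + \cK_T^*)^*\big) = \ran(-\tfrac12 I + \cK_T^*)\oplus\ker(-\tfrac12 I + \cK_T)$, and here one must additionally check that $\ran(-\tfrac12 I + \cK_T^*)\cap\ker(-\tfrac12 I + \cK_T^*) = \{0\}$, i.e. that no generalized eigenvectors at eigenvalue $\tfrac12$ appear — this is the part where one genuinely uses the specific structure of the Stokes double-layer operator and not just soft Fredholm theory.

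For the remaining concrete identifications I would argue as follows. \emph{Part (3), the kernel of $-\tfrac12 I + \cK_T$:} by the jump relation \eqref{eq:trcD}, $(-\tfrac12 I + \cK_T)[\phi] = \cD_T[\phi]\rvert_+$, so $\phi\in\ker(-\tfrac12 I+\cK_T)$ iff $\cD_T[\phi]$ solves the homogeneous Stokes system in $T_+$ with zero trace and the right decay at infinity; uniqueness for the exterior Dirichlet problem forces $\cD_T[\phi]\equiv 0$ in $T_+$, and then the interior trace identity $\cD_T[\phi]\rvert_- = (\tfrac12 I + \cK_T)[\phi]$ combined with $\cD_T[\phi]\rvert_+ = (-\tfrac12 I + \cK_T)[\phi]=0$ gives $\cD_T[\phi]\rvert_- = \phi$; but $\cD_T[\phi]$ is then an interior Stokes solution with constant (in fact, its trace equals $\phi$, which we must show is constant) — the cleanest route is that the constant velocity fields $e_\ell$ are, by a direct computation using $\int_{\partial T} K_{jk}(x,y)\,d\sigma_y = -\tfrac12\delta_{jk}$ (a consequence of $\cD_T[e_\ell] \equiv e_\ell$ in $T$ and $\equiv 0$ in $T_+$, i.e. the analog of the harmonic double-layer identity $\cD[1]=-\bone_{T}$), exactly the solutions of $(-\tfrac12 I + \cK_T)[e_\ell]=0$, and a dimension count from part (2) shows there are no others. \emph{The kernel of $-\tfrac12 I + \cK_T^*$ and the matrix $A_T$:} by \eqref{eq:trcS}, $\phi\in\ker(-\tfrac12 I+\cK_T^*)$ iff the conormal derivative of $(\cS_T[\phi],\cQ_T[\phi])$ from inside $T$ vanishes; since $(\cS_T[\phi],\cQ_T[\phi])$ is a Stokes solution in the bounded domain $T$, Green's identity \eqref{eq:greenid1} with $v=\cS_T[\phi]$ gives $\int_T |\nabla\cS_T[\phi]|^2 = 0$, so $\cS_T[\phi]$ is constant, say $=-a_j$, in $T$; this is precisely the structure in \eqref{eq:phij}. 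The normalization $\int_{\partial T}\phi_j = e_j$ can be arranged because part (1) (which I prove next) shows that $\phi\mapsto\int_{\partial T}\phi$ maps $\ker(-\tfrac12 I + \cK_T^*)$ bijectively onto $\R^d$ — indeed, by \eqref{eq:L2dec} and the fact that $\ran(-\tfrac12 I + \cK_T^*)\subset L^2_0(\partial T)$ (because each $-\tfrac12 I + \cK_T^*$ applied to anything is a conormal derivative of a Stokes solution in $T$, hence integrates to zero by the divergence theorem), together with $\dim L^2(\partial T)/L^2_0(\partial T) = 1$... wait, that's only codimension $1$, so I need instead: the $d$-dimensional space $\ker(-\tfrac12 I+\cK_T^*)$ injects into $\R^d$ under $\phi\mapsto\int\phi$ because if $\int_{\partial T}\phi = 0$ and $\cS_T[\phi] \equiv -a$ in $T$, then $\cS_T[\phi]$ solves the exterior problem with constant boundary data $-a$ and the decay $O(|x|^{2-d})$ (using $\int\phi = 0$ to kill the leading term), forcing $a=0$ by exterior uniqueness and hence $\cS_T[\phi]\equiv 0$ everywhere, whence $\phi = \frac{\partial\cS_T[\phi]}{\partial\nu}\big|_+ - \frac{\partial\cS_T[\phi]}{\partial\nu}\big|_- = 0$ by the jump relation \eqref{eq:trcS}; so the injection is onto for dimension reasons, giving both the normalization and the claim in part (1) about $-\tfrac12 I + \cK_T^*: L^2_0 \to L^2_0$ being bijective (it is injective since its kernel in all of $L^2$ is transverse to $L^2_0$, and surjective onto $L^2_0$ since its full range has codimension $d$ in $L^2$ and lies in $L^2_0$, which also has codimension... again a careful count: $\ran$ has dimension-codimension $d$, sits inside $L^2_0$, and $\ker \cap L^2_0 = \{0\}$, so $\ran = L^2_0$).

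\emph{Symmetry and positivity of $A_T$.} For symmetry: with $\cS_T[\phi_i] \equiv -a_i$ in $T$, apply the second Green identity \eqref{eq:greenid2} — or more directly integrate by parts — to the pair of exterior Stokes solutions $\cS_T[\phi_i]$, $\cS_T[\phi_j]$, using the jump of the conormal derivative across $\partial T$ and the defining relation $\int_{\partial T}\phi_i = e_i$; the computation is exactly the one displayed in \eqref{eq:mjk} of the excerpt run in reverse, and yields $a_i\cdot e_j = \int_{\R^d\setminus\ol T}\nabla\cS_T[\phi_i]:\nabla\cS_T[\phi_j] = a_j\cdot e_i$, i.e. $(A_T)_{ji} = (A_T)_{ij}$. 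For $d\ge 3$, the same identity gives $\xi^T A_T \xi = \int_{\R^d\setminus\ol T} |\nabla\cS_T[\xi^i\phi_i]|^2 \ge 0$, with equality only if $\cS_T[\xi^i\phi_i]$ is constant in $T_+$; by the decay at infinity ($d\ge 3$ is where $\cS_T$ of a general momentum decays) that constant is zero, forcing $\xi^i\phi_i = 0$ and hence $\xi = 0$ since the $\phi_i$ are independent — so $A_T$ is positive definite. (For $d=2$ the logarithmic growth of $\Gamma$ breaks this last step, consistent with the lemma only claiming symmetry there.)

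\textbf{Main obstacle.} The soft part — Fredholmness, closed range, equality of kernel dimensions — is routine given the stated compactness of $\cK_T - \cK_T^*$. The genuinely delicate points are (i) ruling out a nontrivial Jordan block at the eigenvalue $\tfrac12$, i.e. showing $\ran(-\tfrac12 I+\cK_T^*)\cap\ker(-\tfrac12 I+\cK_T^*)=\{0\}$ so that \eqref{eq:L2dec} is a genuine direct sum and not merely a dimension-count coincidence; this requires an energy/uniqueness argument specific to the Stokes structure rather than abstract Fredholm theory. And (ii) the exterior-uniqueness and decay bookkeeping that pins down the constants $a_j$ and the dimension-$d$ count of the kernels — one must be careful about which decay ($O(|x|^{2-d})$ versus logarithmic) is available in which dimension, since this is precisely where $d=2$ behaves differently and where the positivity of $A_T$ can fail. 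I would treat these two items with full care and let the rest follow from the standard layer-potential machinery for Stokes.
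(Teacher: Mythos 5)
Your plan founders first on the ``engine'': compactness of $\cK_T-\cK_T^*$ does \emph{not} make the four operators $\pm\tfrac12 I+\cK_T$, $\pm\tfrac12 I+\cK_T^*$ Fredholm. Two operators differing by a compact operator are Fredholm simultaneously or not at all; to get started you need closed range and a finite-dimensional kernel for at least one of them, and this is not soft. Because the conormal derivative \eqref{eq:conorm} is the non-physical one, $\cK_T$ is \emph{not} compact even for smooth $\partial T$ (the paper stresses this), so $-\tfrac12 I+\cK_T$ is not of the form ``identity plus compact'' and Riesz--Schauder does not apply. In \cite{FabKenVer} the closed range and the dimension-$d$ kernels are obtained from Rellich-type identities for the Stokes system with this conormal derivative, with the compactness of $\cK_T-\cK_T^*$ only an auxiliary ingredient; the paper itself does not reprove items (1)--(2) but quotes them from \cite{FabKenVer}. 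Since your surjectivity of $-\tfrac12 I+\cK_T^*$ onto $L^2_0(\partial T)$ in item (1), the exact count $\dim\ker=d$ used in item (3), and the Fredholm-alternative decompositions in item (4) all rest on this unproved index-zero Fredholm property, this is a genuine gap rather than a routine step.

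A second concrete gap is the symmetry of $A_T$ in $d=2$. You derive $a_i\cdot e_j=\int_{\R^d\setminus\ol T}\nabla\cS_T[\phi_i]:\nabla\cS_T[\phi_j]=a_j\cdot e_i$, but since $\int_{\partial T}\phi_i=e_i\neq 0$, in two dimensions $\cS_T[\phi_i]$ grows logarithmically and its gradient decays only like $|x|^{-1}$, so this Dirichlet integral diverges and the boundary term at infinity in Green's identity does not vanish; your parenthetical concedes only the loss of positivity, yet the lemma asserts symmetry for all $d\ge 2$. The paper avoids the exterior domain entirely: $(a_j)^i=-\langle\phi_i,\cS_T[\phi_j]\rangle_{L^2(\partial T)}$, and symmetry is immediate from the self-adjointness of the boundary single-layer operator, uniformly in $d\ge2$. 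Finally, for \eqref{eq:L2dec} you correctly flag the need to exclude a Jordan block at the eigenvalue $\tfrac12$ but leave it as an ``obstacle''; in fact it follows in one line from facts you already established: any $\phi\in\ran(-\tfrac12 I+\cK_T^*)\cap\ker(-\tfrac12 I+\cK_T^*)$ lies in $L^2_0(\partial T)$ and is therefore zero by your own argument that mean-zero kernel elements vanish -- this is exactly the paper's proof of item (4), which then deduces the directness of the second decomposition (the one for $\cK_T$, which you do not address) from the two orthogonal closed-range decompositions. Where your arguments do land -- the identity $\cK_T[e_\ell]=\tfrac12 e_\ell$ plus a dimension count for $\ker(-\tfrac12 I+\cK_T)$, the construction of the pairs $(\phi_j,a_j)$ in \eqref{eq:phij} via exterior decay and the jump relations, and the positivity of $A_T$ for $d\ge3$ -- they coincide with the paper's.
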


We emphasize that identities in \eqref{eq:L2dec} are, in general, not orthogonal decompositions, and that $A_T$ defined by \eqref{eq:Adef} can be degenerate for $d = 2$ (see Remark \ref{rem:ATpositive} below). 
The first two items of the lemma above are proved by Fabes, Kenig and Verchota \cite{FabKenVer} in the setting that $\partial T$ is Lipschitz. 
Because the non-physical conormal derivative \eqref{eq:conorm} is used in the double-layer potential \eqref{eq:cDT}, the resulted Neumann-Poincar\'e operators $\cK_T$ and its adjoint $\cK^*_T$ are not compact even when $\partial T$ is smooth. Hence, one cannot use the usual Fredholm theory directly for $\cK_T$ and $\cK^*_T$. Nevertheless, the Fredholm property stated in the first two items above were proved in \cite{FabKenVer}, in fact even for Lipschitz $\partial T$. The authors there established the closedness of the ranges of $-\frac12 I + \cK^*_T$ and $-\frac12 I + \cK_T$ using Rellich formulas associated to the Stokes system with conormal derivative \eqref{eq:conorm}, and utilized the fact that $\cK_T - \cK_T^*$ is compact. We refer to \cite[Lemmas 2.1, 2.2, 2.4 and 2.5]{FabKenVer} for the details.

\begin{proof}[Proof of Lemma \ref{lem:Kmap}]
  It remains to prove the last two items. 

  \noindent{\underline{Proof of item (3)}}. \emph{Characterization of $\ker(-\frac12 I + \cK_T)$}: Consider the unit constant vector fields $e_\ell$, $\ell \in \{1,\cdots,d\}$. In view of Green's identity \eqref{eq:greenid1}, they satisfy
\begin{equation*}
\mathcal{D}_T [e_\ell] (x) = \begin{cases}
0, \qquad & x \in \ol{T}^c,\\
e_\ell, \qquad & x \in T.
\end{cases}
\end{equation*}
Using Green's identity \eqref{eq:greenid1} again with $(u,p) = (\Gamma_k,\theta_k)$ and $v = e_\ell$ in the domain $T\setminus \overline{B}_\delta(x)$, and by explicit computations of the resulting boundary integral over $T\cap \partial B_\delta(x)$ and sending $\delta \to 0$, we also have
\begin{equation}
\label{eq:cKT12}
\cK_T[e_\ell](x) = \frac12 e_\ell, \qquad x\in \partial T, \quad \forall \ell = 1,\dots,d.
\end{equation}
Hence, $\mathrm{span}\{e_\ell \,:\, 1\le \ell\le d\}$ is a subspace of $\ker(-\frac12 I + \cK_T)$ with dimension $d$. As a result, the latter must be spanned by $e_\ell$, $\ell=1,\dots,d$. 

\emph{Characterization of $\ker(-\frac12 I + \cK^*_T)$}: we use an argument in \cite[Theorem 2.26]{AmmKan}. Let $\ker(-\frac12 I + \cK^*_T) \otimes \R^d$ denote the product Hilbert space equipped with the standard inner product
\begin{equation*}
\langle (\varphi,a), (\psi,b)\rangle = \langle \varphi,\psi\rangle_{L^2(\partial T)} + a\cdot b, \qquad \text{for } \, \varphi,\psi \in \ker(-\tfrac12 I + \cK^*_T), \, a,b\in \R^d. 
\end{equation*}
This is a Hilbert space with dimension $2d$. The product space $\ker(-\frac12 I + \cK_
T)\otimes \R^d$ and the inner product on it are defined in the same way. Consider the following mapping:
\begin{equation}
\label{eq:KerKer}
\begin{aligned}
\cA_T \quad  : \quad \ker(-\tfrac12 I + \cK^*_T) \otimes \R^d \; &\to\; \ker (-\tfrac12 I + \cK_T) \otimes \R^d \\
(\phi,a) \; &\mapsto \; (\cS_T[\phi] + a, \int_{\partial T} \phi).
\end{aligned}
\end{equation}
Here, $\cS_T[\phi]$ is understood as the trace of the single-layer velocity field on $\partial T$. The mapping is well defined because $\phi$ being in $\ker(-\frac12 I + \cK^*_T)$ implies that $\cS_T[\phi]$ is a constant in $\ol T$. Indeed, let $(u,p) := (\cS_T[\phi], \cQ_T[\phi])$ in $T$ and in $T_+$. Then, due to the jump relation \eqref{eq:trcS} and to the fact that $(-\frac12 I + \cK^*_T)[\phi] = 0$, 
\begin{equation*}
\frac{\partial [u,p]}{\partial \nu}\Big\rvert_- = 0, \qquad \frac{\partial[u,p]}{\partial \nu}\Big\rvert_+  = \phi.
\end{equation*}
By applying Green's identity in $T$, we conclude that $u$ is a constant in $\ol T$.

We show next that \emph{$\cA_T$ is a bijection}.
Since $\cA_T$ is a linear mapping between two Hilbert spaces of the same finite dimension, it suffices to check the injectivity of $\cA_T$. To this end, suppose $\phi \in \ker(-\frac12 I + \cK^*_T)$ and $a \in \R^d$ satisfy
\begin{equation*}
\cS_T[\phi] + a = 0, \qquad \text{and}\qquad \int_{\partial T} \phi = 0.
\end{equation*}
Let $(u,p) = (\cS_T[\phi],\cQ_T[\phi])$ in $T$ and in $T_+$. Then in addition to $u$ being a constant in $T$, we also have sufficient decay of $u$ in $T_+$ at infinity (for $d=2$ we use the fact $\phi \in L^2_0$). This allows us to apply Green's identity in $T_+$ for all $d \ge 2$; it follows that $u$ is a constant in $\R^d$ and, by the jump relation, $\phi = 0$. By the formula above, $a = 0$ as well. This verifies the bijectivity of $\cA_T$.

As an application, for each $j = 1,\dots,d$, there is a unique pair $(\phi_j, a_j)$ so that $\mathcal{A}_T(\phi_j,a_j) = (0,e_j)$. This is precisely the solution pair to \eqref{eq:phij}. Moreover, $\phi_j$'s are clearly independent and form a basis for $\ker(-\frac12 I + \cK^*_T)$, via \eqref{eq:trcS}. This provides a basis for $\ker(-\frac12 I + \cK_T^*)$.

\emph{Symmetry and positivity of the matrix $A_T$}: we first check that the $(i,j)$-element of the matrix $A_T$ defined by \eqref{eq:Adef} is given by
\begin{equation*}
(a_j)^i = - e_i \cdot \cS_T[\phi_j] = -\left(\int_{\partial T} \phi_i \right) \cdot \cS_T[\phi_j] = -\langle \phi_i, \cS_T[\phi_j]\rangle_{L^2(\partial T)}.
\end{equation*}
Since the restriction of $\cS_T$ on $\partial T$ is clearly a self-adjoint operator in $L^2(\partial T)$, we get $(a_j)^i = (a_i)^j$. That is, $A_T$ is symmetric.
For the positivity of $A_T$, let $d\ge 3$. Fix any $c \in \R^d$ and compute
\begin{equation*}
\langle c, A_T c\rangle = - c^i \langle \phi_i, \cS_T[\phi_j]\rangle_{L^2(\partial T)} c^j = -\langle \phi, \cS_T[\phi]\rangle_{L^2(\partial T)}, \qquad \text{with}\; \phi = c^j \phi_j.
\end{equation*}
Clearly, $\phi \in \ker(-\frac12 I + \cK^*_T)$, and $\int_{\partial T} \phi = c$. Let $(u,p)$ be $(\cS_T[\phi],\cQ_T[\phi])$ in $T$ and in $T_+$. We deduce, as before, $u$ is a constant in $\ol T$ and $\phi = \frac{\partial [u,p]}{\partial \nu}\rvert_+$ on $\partial T$. Since $d\ge 3$, $u$ has sufficient decay at infinity and we check that
\begin{equation*}
-\langle \phi, \cS_T[\phi]\rangle_{L^2(\partial T)} = -\int_{\partial T} \frac{\partial[u,p]}{\partial \nu}\Big\rvert_+ \cdot u =\int_{T_+} |\nabla u|^2.
\end{equation*}
The quantity above is non-negative, and it equals zero if and only if $u$ is a constant in $\R^d$, which would imply $\phi = 0$ and $c = 0$. As a result, $A_T$ is positive definite for $d\ge 3$. 

\medskip

\noindent{\underline{Proof of item (4)}}. We have seen that $\ran(-\frac12 I + \cK^*_T)$ is $L^2_0(\partial T)$, whose codimension agrees with $\dim \ker(-\frac12 I + \cK^*_
T)$. For the first identity in \eqref{eq:L2dec}, it suffices to show the right hand side is a direct sum. Suppose $\phi = (-\frac12 I + \cK^*_T)[g]$ and $(-\frac12 I + \cK^*_T)[\phi] = 0$, then as an element in the range of $-\frac12 I + \cK^*_T$, we have $\phi \in L^2_0$. Now we can argue as before for the pair $(u,p) = (\cS_T[\phi],\cQ_T[\phi])$, defined in $T$ and in $T_+$. In particular, we can apply Green's identity in $T_+$ for all $d\ge 2$ and eventually deduce $\phi = 0$. This establishes the first identity in \eqref{eq:L2dec}. The second identity then follows from the \emph{orthogonal} decompositions
\begin{equation*}
 L^2(\partial T) = \ran(-\tfrac12 I + \cK_T) \oplus \ker(-\tfrac12 I + \cK^*_T) =  \ran(-\tfrac12 I + \cK^*_T) \oplus \ker(-\tfrac12 I + \cK_T). 
\end{equation*} 
The proof of the lemma is hence complete.
\end{proof}

\begin{remark}
The argument at the end of the proof of item three cannot go through for $d = 2$ because, if $c\ne 0$ there is not enough decay for $u$ and $p$ defined in the process, and we cannot apply the Green's identity in $T_+$.
When $d\ge 3$, the argument in the proof can be used to show that, the operator $\cS_T[\phi] : \ker(-\frac12 I + \cK^*_T) \to \ker(-\frac12 I + \cK_T)$ is a bijection. This, again, is not true for $d = 2$.
\end{remark}

\begin{remark}\label{rem:ATpositive}
For $d=2$, the logarithmic term in the fundamental solution \eqref{eq:Gammak} yields a particular rescaling property for $\cS_T$. As a result, $A_{T}$ can be degenerate as we show below.

Let $r > 0$; we note that, for all $j,k=1,2,\dots,d$,
\begin{equation*}
\Gamma^j_k\big(\frac{x}{r}\big) = \Gamma^j_k(x) -\frac1{4\pi} (\log r) \delta_{jk}.
\end{equation*}
By the definition of single-layer potentials, we get
\begin{equation*}
\begin{aligned}
\cS_T[\psi](x) &= \int_{\partial T} \Gamma_k\big(\frac{x-y}{r}\big)\psi^k(y)dy + \frac1{4\pi} (\log r) \int_{\partial T} \psi(y)\\
&= r\int_{\partial (\frac1r T)} \Gamma_k\big(\frac{x}{r}-y'\big)\psi^k(ry')dy' + \frac1{4\pi} (\log r) \int_{\partial T} \psi(y)\\
&= r\cS_{\frac1r T}[\psi(r\cdot)]\big(\frac{x}{r}\big) + \frac1{4\pi}(\log r) \int_{\partial T} \psi(y)dy.
\end{aligned}
\end{equation*}
We also check, using the homogeneity (of degree $-1$) of the integration kernel of $\cK^*_T$, that
\begin{equation*}
\cK^*_{\frac1r T}[\psi(r\cdot)]\big(\frac{x}{r}\big) = \cK^*_T[\psi](x).
\end{equation*}
Now consider the basis $\{\phi_j\}$ for $\ker(-\frac12 I + \cK^*_T)$, let $\phi_{j,r}(z) := r \phi_j(rz)$, for $z \in \partial (\frac1r T)$. We check that $\phi_{j,r} \in L^2(\frac1r\partial T)$ and that
\begin{equation*}
\begin{aligned}
&\int_{\frac1r \partial T} \phi_{j,r} = \int_{\partial T} \phi_j = e_j,\\
&(-\tfrac12 I + \cK^*_{\frac1r T})[\phi_{j,r}](z) = r(-\tfrac12 I + \cK^*_T)[\phi_j](rz) = 0, \qquad z \in \partial(\frac{1}{r} T).
\end{aligned}
\end{equation*}
As a result, $\{\phi_{j,r}\}$ are the basis of $\ker(-\frac12 I + \cK^*_{\frac1r T})$ determined by \eqref{eq:phij}. In view of the definition \eqref{eq:Adef}, we check that $A_T$ has the following rescaling property:
\begin{equation}
A_{r T} = A_T + \frac1{4\pi} (\log r) I. 
\end{equation}
We see that given $T$, there always exist one or two $r > 0$ such that $A_{rT}$ is degenerate, and there are at most two such $r$. As long as the homogenization of \eqref{eq:hetstokes} in the dilute setting is considered, we can always assume $\det A_T \ne 0$ because, if this condition fails, we can choose $r_0$ slightly less than one so that $A_{r_0 T}$ is invertible. Then replace $T$ by $r_0T$, and replace $\eta$ by $\eta/r_0$, the asymptotic analysis problem for \eqref{eq:hetstokes} as $\eps \to 0$ is unchanged. 
\end{remark}

The decomposition of $L^2(\partial T)$ established in \eqref{eq:L2dec} can be carried out explicitly, following the construction of the basis $\{\phi_j\}$ of $\ker(-\frac12 I + \cK^*_T)$. 

\begin{proposition}
\label{prop:L2dec}
Let $\Pi_0$ be the projection operator from $L^2(\partial T)$ on $\ker(-\tfrac12 I + \cK_T)$ and $\Pi_1$ be the projection operator from $L^2(\partial T)$ on $\ran(-\tfrac12 I + \cK_T)$ defined according to \eqref{eq:L2dec}. More precisely, given $\psi \in L^2(\partial T)$, $(\Pi_0[\psi],\Pi_1[\psi])$ is the unique pair such that
\begin{equation*}
\psi = \Pi_0[\psi] + \Pi_1[\psi], \qquad \Pi_0[\psi] \in \ker(-\tfrac12 I + \cK_T)\ \text{and} \  \Pi_1[\psi] \in \ran(-\tfrac12 I + \cK_T).
\end{equation*}
Then, $\Pi_0$ is given by the following formula:
\begin{equation}
\label{eq:proj0}
(\Pi_0[\psi])^k = \langle \phi_k, \psi\rangle_{L^2(\partial T)}, \qquad k = 1,\dots,d.
\end{equation}
\end{proposition}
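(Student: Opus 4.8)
The plan is to exploit the \emph{orthogonal} decomposition
\[
L^2(\partial T) = \ran(-\tfrac12 I + \cK_T) \oplus \ker(-\tfrac12 I + \cK_T^*)
\]
recorded at the end of the proof of Lemma~\ref{lem:Kmap}(4), which identifies $\ran(-\frac12 I + \cK_T)$ with the orthogonal complement of $\ker(-\frac12 I + \cK_T^*)$. Since by Lemma~\ref{lem:Kmap}(3) the functions $\{\phi_1,\dots,\phi_d\}$ form a basis of $\ker(-\frac12 I + \cK_T^*)$, a function $g\in L^2(\partial T)$ lies in $\ran(-\frac12 I + \cK_T)$ if and only if $\langle \phi_k,g\rangle_{L^2(\partial T)} = 0$ for every $k=1,\dots,d$.

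Next I would use that, by the definition of the decomposition \eqref{eq:L2dec}, we have $\psi - \Pi_0[\psi] = \Pi_1[\psi] \in \ran(-\frac12 I + \cK_T)$, hence $\langle \phi_k, \psi - \Pi_0[\psi]\rangle_{L^2(\partial T)} = 0$, i.e.
\[
\langle \phi_k, \psi\rangle_{L^2(\partial T)} = \langle \phi_k, \Pi_0[\psi]\rangle_{L^2(\partial T)}, \qquad k=1,\dots,d.
\]
Then I would evaluate the right-hand side using the structure of $\ker(-\frac12 I + \cK_T)$. By Lemma~\ref{lem:Kmap}(3), $\Pi_0[\psi]$ is a constant vector field, so $\Pi_0[\psi] = \sum_{\ell} (\Pi_0[\psi])^\ell e_\ell$ with scalar components $(\Pi_0[\psi])^\ell$. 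Therefore $\langle \phi_k, \Pi_0[\psi]\rangle_{L^2(\partial T)} = \sum_\ell (\Pi_0[\psi])^\ell \langle \phi_k, e_\ell\rangle_{L^2(\partial T)}$, and
\[
\langle \phi_k, e_\ell\rangle_{L^2(\partial T)} = e_\ell \cdot \int_{\partial T} \phi_k = (e_k)^\ell = \delta_{k\ell}
\]
by the normalization $\int_{\partial T}\phi_k = e_k$ from \eqref{eq:phij}. Combining the last three displays gives $\langle \phi_k,\psi\rangle_{L^2(\partial T)} = (\Pi_0[\psi])^k$, which is exactly \eqref{eq:proj0}.

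There is essentially no serious obstacle here: the content is entirely carried by the orthogonality relation $\ran(-\frac12 I + \cK_T) = \ker(-\frac12 I + \cK_T^*)^\perp$ and by the biorthogonality $\langle \phi_k, e_\ell\rangle_{L^2(\partial T)} = \delta_{k\ell}$ coming from the normalization of the $\phi_j$. The only point requiring a little care is to argue via the orthogonal complement characterization of the range — rather than directly with the (generally non-orthogonal) algebraic direct sum in \eqref{eq:L2dec} — so that testing against the $\phi_k$'s is legitimate; and to observe that the $d$ functionals $\psi\mapsto\langle\phi_k,\psi\rangle_{L^2(\partial T)}$ indeed reproduce the basis $\{e_\ell\}$ of $\ker(-\frac12 I + \cK_T)$, which is precisely what the identity $\langle\phi_k,e_\ell\rangle_{L^2(\partial T)}=\delta_{k\ell}$ confirms. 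One should also note in passing that the formula is manifestly consistent — applying it to $\psi = e_\ell$ returns $e_\ell$, as it must, since $e_\ell \in \ker(-\frac12 I + \cK_T)$.
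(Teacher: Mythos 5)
Your argument is correct and uses exactly the ingredients the paper intends: the proposition is stated without an explicit proof, and the implicit one is precisely your combination of the orthogonality $\ran(-\tfrac12 I + \cK_T)=\ker(-\tfrac12 I + \cK_T^*)^\perp$ (recorded at the end of the proof of Lemma~\ref{lem:Kmap}) with the normalization $\int_{\partial T}\phi_k = e_k$ from \eqref{eq:phij}, giving $\langle\phi_k,e_\ell\rangle_{L^2(\partial T)}=\delta_{k\ell}$. Nothing is missing; note only that the direction you actually need (that $\Pi_1[\psi]$ is annihilated by the $\phi_k$'s) already follows from $\langle\phi_k,(-\tfrac12 I+\cK_T)g\rangle=\langle(-\tfrac12 I+\cK_T^*)\phi_k,g\rangle=0$, without invoking closedness of the range.
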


\begin{remark}Consider the fundamental solutions $\Gamma_\ell(x)\rvert_{\partial T}$, $\ell = 1,\dots,d$, defined by \eqref{eq:Gammak} and apply the decomposition formula to them. We find, for $d\ge 3$, 
\begin{equation}
(\Pi_0[-\Gamma_\ell])^k = -\int_{\partial T} \Gamma_\ell(y) \cdot \phi_k(y) = -(\cS_T[\phi_k])^\ell(0) = (a_k)^\ell.
\end{equation}
In other words, the columns of the matrix $A_T$ are precisely the projections $\Pi_0[-\Gamma_\ell]$, $\ell=1,\dots,d$.
\end{remark}

\begin{remark}\label{rem.withoutA3} In this paper we assumed that $T$ is connected, i.e., there is only one hole inside the unit cell. This simplifies matter for our approach that is based on layer potential theory. Namely, it allows us to use results from \cite{FabKenVer} directly (where a single hole was assumed), and, moreover, $\ker(-\frac12 I + \cK_T)$ and $\ker(-\frac12 I + \cK_T^*)$ have rather simple structure. Our approach, however, can be generalized to treat multiple holes, e.g., $T$ has $N$ connected components and $\bT^d\setminus \ol T$ is still connected. Then the kernel of $-\frac12 I + \cK$ and its adjoint will have dimension $N\times d$, and the concept of $A_T$ should be replaced by $N\times N$ matrix with entries that are themselves $d\times d$ matrices. The permeability matrix $M$ should be defined by the inverse of a proper contraction of $A_T$.
\end{remark}

\subsection{Periodic Stokes potentials}
\label{sec.perpot}

To find a representation formula for the solution of the cell problem \eqref{eq:etacell}, we define periodic versions of the layer potential operators for the Stokes system. The fundamental solutions of the Stokes system on the unit torus $\bT^d$, denoted by $(G_k,p_k)$, for each $k=1,\dots,d$, satisfy the equation
\begin{equation*}
\left\{
\begin{aligned}
&\Delta G_k(x) - \nabla P_k(x) = (\delta_0(x) - 1)e_k,\\
&\nabla \cdot G_k(x) = 0,
\end{aligned}
\right. 
\qquad \text{in } \bT^d.
\end{equation*}
Here, $G_k$ and $P_k$ are understood as functions defined on the flat torus $\bT^d$, or, equivalently, as periodic functions over $\R^d$. The existence and uniqueness (up to additive constants) for $(G_k,P_k)$ is a standard result, and explicit formulas can be obtained in terms of Fourier series. To fix the constants, we assume that $G_k$ and $P_k$ are mean zero on $\bT^d$. It is also clear that we can write
\begin{equation}\label{e.defRk}
G_k(x) = \Gamma_k(x) + R_k(x), \quad\text{and}\quad P_k(x) = \theta_k(x) + h_k(x),
\end{equation}
where $R_k$ is a vector field on the unit cube $Q_1$ and $h_k$ is a scalar field on $Q_1$. Moreover, by subtracting the equations satisfied by $(\Gamma_k,\theta_k)$ from those satisfied by $(G_k,P_k)$, we can derive equations satisfied by $(R_k,h_k)$, in $Q_1$, and deduce that $R_k$ and $h_k$ are smooth functions in the closed cube $\ol Q_1$, although $\nabla R_k$ and $h_k$ (if extended periodically) are not continuous across $\partial Q_1$. 

Taking divergence on the first equation for $(G_k,P_k)$, we find that
\begin{equation*}
-\Delta P_k = \partial_k (\delta_0-1) \qquad \text{in } \bT^d.
\end{equation*}
This implies that, up to an additive constant, $P_k$ is given by
\begin{equation}
\label{eq:PkGdel}
P_k = -\partial_k G_\Delta,
\end{equation}
where $G_\Delta$ is the fundamental solution to the Laplace equation on the unit flat torus. This can be viewed as an analog of the formula \eqref{eq:thetak}.

On the rescaled torus $\eta^{-1}\bT^d$, the fundamental solutions $(G^\eta_k,P^\eta_k)$ are given by
\begin{equation*}
G^\eta_k(x) = \eta^{d-2}G_k(\eta x), \qquad P^\eta_k(x) = \eta^{d-1}P_k(\eta x), \qquad x\in \eta^{-1}\bT^d.
\end{equation*}
and they satisfy
\begin{equation}
\label{eq:Geta}
\left\{
\begin{aligned}
&\Delta G^\eta_k(x) - \nabla P^\eta_k(x) = (\delta_0(x) - \eta^d)e_k,\\
&\nabla \cdot G^\eta_k(x) = 0,
\end{aligned}
\right.
\qquad \text{in } \eta^{-1}\bT^d.
\end{equation}
For $\eta$ small, those functions are perturbations of the free-space fundamental solutions:
\begin{equation}
\label{eq:Gperturb}
G^\eta_k(x) = \Gamma_k(x) + \eta^{d-2} R_k(\eta x), \quad P^\eta_k(x) = \theta_k(x) + \eta^{d-1} h_k(\eta x).
\end{equation}
Note also, slightly abusing the notation, the functions $G^\eta_k(x,y) := G^\eta_k(x-y)$ and $P^\eta_k(x,y):=P^\eta_k(x-y)$, for $x,y\in \eta^{-1}\bT^d$, satisfy
\begin{equation*}
  \Delta_x G(x,y) - \nabla_x P^\eta(x,y) = (\delta_y(x) - \eta^d)e_k, \qquad \text{in } \; \eta^{-1}\bT^d.
\end{equation*}

Now we introduce the periodic layer potentials for Stokes systems on the torus $\eta^{-1}\bT^d$ and associated to the set $\ol T$. We always assume that $T$ satisfies the conditions in Assumption \ref{assump:geo}.

The periodic single-layer velocity field and pressure fields are defined by: 
\begin{equation}
\label{eq:cSeta}
\begin{aligned}
&\cS^\eta_T[\phi](x) := \int_{\partial T} G^\eta_k(x-y) \phi^k(y) d\sigma_y, \\
&\cQ^\eta_T[\phi](x) := \int_{\partial T} P^\eta_k(x-y) \phi^k(y) d\sigma_y,
\end{aligned}
\qquad x \in \eta^{-1} \bT^d \setminus \partial T.
\end{equation}
It is straightforward to check that
\begin{equation*}
\Delta \cS^\eta_T[\phi] - \nabla \cQ^\eta_T[\phi] = -\eta^d \int_{\partial T} \phi, \qquad  x\in \eta^{-1}\bT^d \setminus \partial T.
\end{equation*}
Hence, the pair $(\cS^\eta_T[\phi],\cQ^\eta_T[\phi])$ solves the homogeneous Stokes system away from $\partial T$ if and only if $\phi$ is mean-zero on $\partial T$.

The periodic double-layer velocity and pressure fields are defined by:
\begin{equation}
\label{eq:cDeta}
\begin{aligned}
&\cD^\eta_T[\phi](x) := \int_{\partial T} \frac{\partial [G^\eta_k, P^\eta_k](y-x)}{\partial \nu_y} \phi^k(y) d\sigma_y,\\
&\cP^\eta_T[\phi](x) := \int_{\partial T} N_y^i\partial_{y_i} [P_k^\eta(y-x)] \phi^k(y) d\sigma_y,
\end{aligned}
\qquad x \in \eta^{-1}\bT^d \setminus \partial T.
\end{equation}
It can be checked that the pair $(\cD^\eta[\phi], \cP^\eta[\phi])$ solve the homogeneous Stokes system away from $\partial T$ for all $\phi$ in $L^2(\partial T)$. In view of the perturbative formula \eqref{eq:Gperturb}, the operators $\cD_T^\eta$, $\cS^\eta_T$, $\cP_T^\eta$, $\cQ^\eta_T$, etc., are perturbations of the corresponding operators defined in the free-space. In particular, we can check from direct computations that
\begin{equation}
\label{eq:cDeJump}
\cD^{\eta}_T[\phi](x) = \cD_T[\phi](x) + \eta^{d-1}\mathcal{R}^\eta_T[\phi](x), \qquad \forall x \in \eta^{-1}\bT^d\setminus \partial T,
\end{equation}
where $\mathcal{R}_T^\eta[\phi]$ is defined as 
\begin{equation*}
\mathcal{R}^\eta_T[\phi](x) := \int_{\partial T}[-h_k(\eta(y-x))N_y + N^\ell_y \partial_{y_\ell} R_k(\eta (y-x))] \phi^k(y) \,d\sigma_y.
\end{equation*}
Since $h$ and $\nabla R_k$ are bounded functions over $Q_1$, the integral operator $\mathcal{R}^\eta_T$ has an integration kernel that can be uniformly bounded in $\eta$ (in fact, smallness can be explored). It follows that the periodic double-layer velocity field has the following jump condition across $\partial T$:
\begin{equation}
\cD^\eta_T[\phi]\Big\rvert_\pm(x) = \left(\mp \frac12 I + \cK^\eta_T\right)[\phi](x), \qquad x \in \partial T.
\end{equation}
Here $\cK^\eta_T$ is simply given by
\begin{equation*}
\cK^\eta_T := \cK_T + \eta^{d-1}\cR^\eta_T,
\end{equation*}
and $\cR^\eta_T$ is defined as before but with $x \in \partial T$. Because $\cR^\eta_T$ has a uniformly bounded integration kernel, $\cR^\eta_T$ maps $L^2(\partial T)$ to $L^\infty(\partial T)$, and $\cR^\eta_T$ is compact as an operator on $L^2(\partial T)$.

In a similar way, the conormal derivative of the periodic single-layer potentials satisfies the following jump formula:
\begin{equation}
\frac{\partial(\cS^\eta_T,\cQ_T^\eta)[\phi]}{\partial \nu}\Big\rvert_\pm (x) = \left(\pm \frac12 I + \cK^{\eta,*}_T\right)[\phi](x).
\end{equation}
Here, $\cK^{\eta,*}_T$ is the adjoint operator of $\cK^\eta_T$, and it can be written as $\cK^*_T + \eta^{d-1}(\cR^\eta_T)^*$. 

Because $\cK^\eta_T$ (respectively, $\cK^{\eta,*}_T$) is a small (for small $\eta$) and compact perturbation of $\cK_T$ (respectively, $\cK^*_T$), many mapping properties of $-\frac12 I + \cK^\eta_T$ (and $-\frac12 I + \cK^{\eta,*}_T$) follow from those of $-\frac12 I + \cK_T$; in particular, the following results are useful. 

\begin{lemma}
\label{lem:Kper}
Let $d\ge 2$. The following statements hold.
\begin{itemize}
	\item[(1)] For each $e_k$, $k=1,\dots,d$, the identity $(-\frac12 I + \cK^\eta_T)[e_k] = -\eta^d|T|e_k$ holds.
	\item[(2)] The kernel of $-\frac12 I + \cK^\eta_T$ is $\{0\}$.
\item[(3)] The operator $-\frac12 I + \cK^\eta_T : L^2(\partial T) \to L^2(\partial T)$ is invertible.
\end{itemize}
\end{lemma}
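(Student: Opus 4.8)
The plan is to prove the three items in order, using item~(1) inside the argument for item~(2) and then deducing items~(2) and~(3) jointly via a short Fredholm argument. For item~(1) I would compute $\cD^\eta_T[e_k]$ on the exterior region $T_+:=\eta^{-1}\bT^d\setminus\ol T$. Fix $x\in T_+$; the pair $(G^\eta_k(\cdot-x),P^\eta_k(\cdot-x))$ is then smooth near $\ol T$ and solves $\Delta u-\nabla q=-\eta^d e_k$, $\nabla\cdot u=0$ in $T$ (the Dirac mass at $y=x$ sits outside $\ol T$). Feeding this pair and the \emph{constant} test field $v=e_m$ into Green's identity \eqref{eq:greenid1} on $T$ (so that $\nabla v=0$ and $\nabla\cdot v=0$), the only surviving term on the right is $\int_T(\Delta u-\nabla q)\cdot e_m=-\eta^d|T|\,\delta_{km}$, while the left side equals $(\cD^\eta_T[e_k](x))^m$. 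Hence $\cD^\eta_T[e_k]\equiv-\eta^d|T|\,e_k$ on all of $T_+$, and letting $x\to\partial T$ from $T_+$ and using the jump relation $\cD^\eta_T[\phi]\rvert_+=(-\tfrac12 I+\cK^\eta_T)[\phi]$ gives $(-\tfrac12 I+\cK^\eta_T)[e_k]=-\eta^d|T|\,e_k$, which is item~(1).

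For items~(2) and~(3) I would first record that $-\tfrac12 I+\cK^\eta_T$ is Fredholm of index zero: by Lemma~\ref{lem:Kmap} the free-space operator $-\tfrac12 I+\cK_T$ has closed range and both its kernel and, via \eqref{eq:L2dec}, its cokernel are $d$-dimensional, hence it is Fredholm of index zero, and $\eta^{d-1}\cR^\eta_T$ is a compact perturbation of it. An index-zero Fredholm operator is invertible exactly when it is injective (with automatically bounded inverse by the open mapping theorem), so item~(3) is a consequence of item~(2). Since moreover the adjoint $-\tfrac12 I+\cK^{\eta,*}_T$ is also Fredholm of index zero, $\dim\ker(-\tfrac12 I+\cK^\eta_T)=\dim\ker(-\tfrac12 I+\cK^{\eta,*}_T)$, so it is enough to prove that the latter kernel is $\{0\}$.

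For this, let $\psi\in\ker(-\tfrac12 I+\cK^{\eta,*}_T)$. Pairing with $e_k$ and using $\langle\cK^{\eta,*}_T[\psi],e_k\rangle=\langle\psi,\cK^\eta_T[e_k]\rangle$ together with item~(1) gives $\eta^d|T|\int_{\partial T}\psi^k=0$ for every $k$, so $\psi$ is mean zero on $\partial T$; consequently $(u,p):=(\cS^\eta_T[\psi],\cQ^\eta_T[\psi])$ solves the homogeneous Stokes system on $\eta^{-1}\bT^d\setminus\partial T$ (cf.\ the discussion after \eqref{eq:cSeta}). The jump formula for the conormal derivative of the periodic single layer gives $\frac{\partial[u,p]}{\partial\nu}\rvert_-=(-\tfrac12 I+\cK^{\eta,*}_T)[\psi]=0$, so Green's identity \eqref{eq:greenid1} on $T$ with $v=u$ yields $\int_T|\nabla u|^2=0$; hence $\nabla u=0$ and $u$ is a constant vector $a$ in $T$, then $\nabla p=\Delta u=0$ so $p$ is constant in $T$, and $\frac{\partial[u,p]}{\partial\nu}\rvert_-=-p\rvert_-\,N=0$ forces $p\equiv0$ in $T$. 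Since the single-layer velocity is continuous across $\partial T$, $u\rvert_+=a$; applying the same energy identity to $u-a$ on the bounded domain $T_+$ (whose only boundary is $\partial T$) gives $u\equiv a$ and $p\equiv$ const in $T_+$ as well. But $p=\cQ^\eta_T[\psi]$ has mean zero over $\eta^{-1}\bT^d$ because $P^\eta_k$ does, and $p\equiv0$ on $T$ while $|T_+|>0$, so $p\equiv0$ everywhere. Finally the jump of the conormal derivative reads $\psi=\frac{\partial[u,p]}{\partial\nu}\rvert_+-\frac{\partial[u,p]}{\partial\nu}\rvert_-$, where $\frac{\partial[u,p]}{\partial\nu}\rvert_+$ vanishes since $u$ is constant and $p\equiv0$ in $T_+$, while $\frac{\partial[u,p]}{\partial\nu}\rvert_-=0$ by hypothesis; therefore $\psi=0$, proving $\ker(-\tfrac12 I+\cK^{\eta,*}_T)=\{0\}$ and hence items~(2) and~(3).

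I expect item~(2) to be the real obstacle: one must chain the energy identities on $T$ and on $T_+$ while keeping careful track of the jump relations, the continuity of the single-layer velocity field, and the mean-zero normalization of the periodic pressure, and then invoke item~(1) to eliminate the leftover constant. Everything else — the Fredholm index count, the compactness of the $\eta$-corrections, and the $L^2$-density mapping properties that make the layer potentials $H^1$ on each side — is routine given Lemma~\ref{lem:Kmap} and the construction of the periodic potentials.
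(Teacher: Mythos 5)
Your proof is correct and follows essentially the same route as the paper: item (1) via Green's identity for the periodic fundamental pair tested against constants plus the jump relation (you take the exterior trace where the paper takes the interior one), and items (2)--(3) by showing $\ker(-\tfrac12 I+\cK^{\eta,*}_T)=\{0\}$ through the mean-zero property from item (1), energy identities in $T$ and $T_+$, the mean-zero normalization of the periodic pressure, and the conormal jump, combined with index-zero Fredholmness inherited from $-\tfrac12 I+\cK_T$ under the compact perturbation $\eta^{d-1}\cR^\eta_T$. The only differences (exterior vs.\ interior limit in item (1), testing with $u-a$ on $T_+$, and making the index-zero count explicit) are cosmetic.
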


\begin{proof}\ \\
\noindent\underline{First item.}\\
We take $x\in T$ and apply the Green's identity in $T$, we obtain
\begin{equation*}
  \cD_T^\eta[e_k](x) = \int_{\partial T} \frac{\partial[G^\eta_\ell,P^\eta_\ell](y-x)}{\partial \nu_y} \delta_{k\ell} d\sigma_y = \delta_{\ell k} \int_T (\delta(y-x) - \eta^d)e_\ell dy = (1-\eta^d|T|)e_k.
\end{equation*}
Sending $x$ to the boundary $\partial T$, we get the first result. As a consequence, for any $\phi \in \ker(-\frac12 I + \cK^{\eta,*}_T)$, the following holds:
\begin{equation*}
\int_{\partial T} e_k \cdot \phi = -\frac{1}{\eta^d |T|} \int_{\partial T} (-\tfrac12 I + \cK^\eta_T)[e_k] \cdot \phi =  -\frac{1}{\eta^d |T|} \int_{\partial T} e_k \cdot(-\tfrac12 I + \cK^{\eta,*}_T)[\phi] = 0.
\end{equation*}
In other words, we have $\ker(-\frac12 I + \cK^{\eta,*}_T) \subseteq L^2_0(\partial T)$.

\smallskip

\noindent\underline{Second item.}\\
Since $\cK^\eta_T$ (respectively, $\cK^{\eta,*}_T$) is a compact perturbation of $\cK_T$ (respectively, $\cK^*_T$) and $-\frac12 I + \cK_T$ is Fredholm (as in Lemma \ref{lem:Kmap}), we conclude that the operator $-\frac12 I + \cK^\eta_T$ and its adjoint operator are Fredholm. It suffices to check that $\ker(-\frac12 I + \cK^{\eta,*}_T)$ contains only $0$. Take any element $\phi$ in this kernel, we have seen that $\phi \in L^2_0(\partial T)$. Let $(u,p)$ be the pair $(\cS^\eta_T[\phi]\vert, \cQ^\eta_T[\phi]\rvert)$, then they satisfy the homogeneous Stokes system in $T$ and in $\eta^{-1}\bT^d\setminus \ol T$. By the trace formulas, we have
\begin{equation*}
\frac{\partial[u,p]}{\partial \nu}\Big\rvert_{-} = \left(-\frac12 I + \cK^{\eta,*}_T\right)[\phi] = 0, \qquad \phi = \frac{\partial[u,p]}{\partial \nu}\Big\rvert_{+}, \qquad \text{in } \partial T.
\end{equation*}
Applying Green's identity, first in $T$ and then in $\eta^{-1}\bT^d\setminus \ol T$, we conclude that $u$ is a constant in $\eta^{-1}\bT^d$, $p = 0$ in $T$, and $p$ is a constant in $\eta^{-1}\bT^d\setminus \ol T$. Denote this constant by $p_+$. Then $\phi = -p_+ N$ on $\partial T$. We observe from the definition of the single-layer pressure that
\begin{equation*}
  \int_{\eta^{-1}\bT^d} \cQ^\eta_T[\phi](x) dx = \int_{\partial T}\int_{\eta^{-1}\bT^d} P^\eta_k(x-y)dx  \phi(y) dy = 0.
\end{equation*}
In other words, $p = \cQ^\eta_T[\phi]$ averages to zero in $\eta^{-1}\bT^d$. Since $p\rvert_T = 0$, it follows that $p_+ = 0$ and, hence, $\phi = 0$. This verifies $\ker(-\frac12 I + \cK^{\eta,*}_T) = \{0\}$ and proves item two. 

\smallskip

\noindent\underline{Third item.}\\
It follows immediately since we have shown that $-\frac12 I + \cK^\eta_T$ is Fredholm and is injective on $L^2(\partial T)$. The proof of the lemma is now complete.
\end{proof}
\section{The two-scale cell problems}
\label{sec.twoscale}

The goal of this section is to prove Theorem \ref{lem:cellqual}. We first obtain a representation formula for the two-scale correctors by inverting the periodic layer potentials defined in Section \ref{sec.perpot}.

\subsection{Layer potential representation formulas}

Our strategy for solving the two-scale corrector problem \eqref{eq:etacell} is based on the comparison with the system \eqref{eq:Geta}. Indeed, we seek for $(\Phi^\eta_k,\psi^\eta_k)$ so that the following formulas hold:
\begin{equation}
\label{eq:chiform}
\chi^\eta_k = G^\eta_k + \Phi^\eta_k, \qquad \omega^\eta_k = P^\eta_k + \psi^\eta_k, \qquad \text{in }\, \eta^{-1}\bT^d\setminus \ol T.
\end{equation}
The pair $(\Phi^\eta_k, \psi^\eta_k)$ then solves the problem
\begin{equation}
\label{eq:perext}
\left\{
\begin{aligned}
&\Delta \Phi^\eta_k - \nabla \psi^\eta_k = 0, &\quad &x \in \eta^{-1}\bT^d \setminus \ol{T},\\
&\nabla \cdot \Phi^\eta_k = 0, &\quad &x \in \eta^{-1}\bT^d \setminus \ol{T},\\
&\Phi^\eta_k = -G^\eta_k(\cdot,0), &\quad &x \in \partial T,;
\end{aligned}
\right.
\end{equation}
notice that we use assumption (A1), namely that $0\in T$. 
This is a Dirichlet boundary value problem on the perforated torus $\eta^{-1}\bT^d\setminus \ol{T}$, with smooth Dirichlet data $-G^\eta_k$ given on $\partial T$. The following lemma provides explicit formulas for the solution.

\begin{lemma}
\label{lem:chiomega}
Let $d\ge 2$ and suppose Assumption \ref{assump:geo} holds. Then, for each $k = 1,\dots,d$, the unique solution $(\chi^\eta_k,\omega^\eta_k)$ (for $\omega^\eta_k$ the uniqueness is up to an additive constant) to \eqref{eq:etacell} is given by
\begin{equation}
\label{eq:etachi}
\begin{aligned}
  &\chi^\eta_k(x) = G^\eta_k(x) + A_T e_k + \cD^\eta_T[\tilde g](x) + r^\eta_k, \\
  &\omega^\eta_k(x) = P^\eta_k(x) + \cP^\eta_T[\tilde g](x),
\end{aligned}
\end{equation}
for $x \in \eta^{-1}\bT^d\setminus \ol T$. Here, $r^\eta_k$ is a constant vector whose norm is of order $O(\eta^{d-2})$; $t^\eta_k$ is a real number of order $\eta^{d-1}$; $g$ is a function in $L^2(\partial T)$ with mean value $\langle g\rangle_{\partial T}$ on $\partial T$, and $\tilde g : = g-\langle g\rangle_{\partial T}$ is the mean-zero part of $g$. Moreover, there exists some universal constant $C > 0$ so that
\begin{equation}
\label{eq:gbdd}
|\langle g\rangle_{\partial T}| \le C\eta^{-1}, \qquad \|g-\langle g\rangle_{\partial T}\|_{L^2(\partial T)} \le C.
\end{equation}
\end{lemma}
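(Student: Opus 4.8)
The plan is to solve the Dirichlet problem \eqref{eq:perext} by the double-layer potential ansatz
$\Phi^\eta_k = A_T e_k + \cD^\eta_T[\tilde g] + r^\eta_k$ together with a correction of the pressure by $\cP^\eta_T[\tilde g]$, and then to determine the momentum $g$ (or rather its mean-zero part $\tilde g$) and the constants by matching the boundary data $-G^\eta_k(\cdot,0)$ on $\partial T$. First I would record that, by the jump relation proved in Section~\ref{sec.perpot}, the trace of $\cD^\eta_T[\tilde g]$ from the exterior is $(-\tfrac12 I + \cK^\eta_T)[\tilde g]$, so imposing the boundary condition amounts to the equation
\begin{equation*}
\left(-\tfrac12 I + \cK^\eta_T\right)[\tilde g] = -G^\eta_k(\cdot,0) - A_T e_k - r^\eta_k \qquad \text{on } \partial T.
\end{equation*}
By Lemma~\ref{lem:Kper}(3) the operator $-\tfrac12 I + \cK^\eta_T$ is invertible on all of $L^2(\partial T)$, so for \emph{any} choice of the constant $r^\eta_k$ this equation has a unique solution, and the corresponding $(\Phi^\eta_k,\psi^\eta_k)$ solves \eqref{eq:perext}; since that Dirichlet problem has a unique velocity field, the representation \eqref{eq:etachi} is then automatic once we exhibit one valid choice of constants. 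The reason one writes $A_T e_k$ explicitly, rather than absorbing it into $r^\eta_k$, is that $A_T e_k = a_k$ is precisely $\Pi_0[-\Gamma_k(\cdot,0)]$ by the remark after Proposition~\ref{prop:L2dec}: it is the order-one part of the data coming from the singular part $\Gamma_k$ of $G^\eta_k$, while the remainder $\eta^{d-2}R_k(\eta\,\cdot)$ of $G^\eta_k$ is genuinely $O(\eta^{d-2})$ on $\partial T$. This is the conceptual content: the constant $A_T e_k$ is the ``capacitary" piece and $r^\eta_k$ is a lower-order correction.

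Next I would extract the estimates \eqref{eq:gbdd}. Decompose the data on $\partial T$ as $-G^\eta_k(\cdot,0) = -\Gamma_k(\cdot,0) - \eta^{d-2}R_k(\eta\,\cdot)$. Using the splitting $L^2(\partial T) = \ran(-\tfrac12 I + \cK_T)\oplus \ker(-\tfrac12 I + \cK_T)$ of Lemma~\ref{lem:Kmap}(4) and the projection formula \eqref{eq:proj0}, one writes $-\Gamma_k(\cdot,0) = \Pi_1[-\Gamma_k] + \Pi_0[-\Gamma_k] = \Pi_1[-\Gamma_k] + A_T e_k$. Thus, choosing $r^\eta_k$ to capture $-\eta^{d-2}\Pi_0[R_k(\eta\,\cdot)]$ (a constant vector of size $O(\eta^{d-2})$ since $R_k$ is bounded on $\ol Q_1$), the right-hand side of the equation for $\tilde g$ becomes
$\Pi_1[-\Gamma_k] - \eta^{d-2}\Pi_1[R_k(\eta\,\cdot)] + (\text{small perturbation from } \eta^{d-1}\cR^\eta_T)$,
which lies in a fixed bounded set of $L^2(\partial T)$ uniformly in $\eta$ (the bulk term $\Pi_1[-\Gamma_k]$ is a fixed function, the rest is $O(\eta^{d-2})$ or smaller). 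Since $(-\tfrac12 I + \cK^\eta_T)^{-1}$ is uniformly bounded for small $\eta$ — because $\cK^\eta_T = \cK_T + \eta^{d-1}\cR^\eta_T$ is a norm-small, compact perturbation of an invertible-on-the-quotient operator, so a Neumann-series/stability argument gives uniform invertibility — we get $\|\tilde g\|_{L^2(\partial T)} \le C$. For the mean $\langle g\rangle_{\partial T}$: here $d=2$ versus $d\ge 3$ must be separated. For $d\ge 3$ one can in fact take $\langle g\rangle_{\partial T}$ bounded, but the uniform claim $|\langle g\rangle_{\partial T}|\le C\eta^{-1}$ is weaker and easy. For $d=2$ the logarithmic growth of $\Gamma_k$ forces a genuine $O(\eta^{-1})$ (more precisely $O(|\log\eta|)$, absorbed into $C\eta^{-1}$) contribution: one sees this either from the rescaling identity for $\cS_T$ in Remark~\ref{rem:ATpositive}, or directly from $G^\eta_k = \Gamma_k + \eta^{d-2}R_k(\eta\cdot)$ noting that on the rescaled torus the "constant mode" of the Dirichlet data carries a $\log\eta$. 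I would track the mean component by pairing the equation for $g$ against the constant eigenfunctions $e_\ell \in \ker(-\tfrac12 I + \cK_T)$ and using Lemma~\ref{lem:Kper}(1), which relates $\langle (-\tfrac12 I + \cK^\eta_T)[\phi], e_\ell\rangle$ to $\langle\phi\rangle_{\partial T}$ up to the factor $\eta^d|T|$; this is where the $\eta^{-1}$ enters.

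The final bookkeeping is the pressure and the scalar $t^\eta_k$: $\omega^\eta_k = P^\eta_k + \cP^\eta_T[\tilde g]$ is forced by the ansatz, and any remaining freedom in the additive constant for $\omega^\eta_k$ is exactly the $t^\eta_k$, whose size $O(\eta^{d-1})$ follows from the perturbative formula $\cP^\eta_T = \cP_T + O(\eta^{d-1})$ applied to the bounded density $\tilde g$, together with the analogous expansion for $P^\eta_k$; I would just note the normalization \eqref{eq:etachi} fixes this. I expect the \textbf{main obstacle} to be twofold: (i) proving the \emph{uniform in $\eta$} invertibility of $-\tfrac12 I + \cK^\eta_T$ with a bound independent of $\eta$ — the operator is Fredholm of index zero for each $\eta$ but one needs the inverse not to blow up, which requires combining the closed-range Rellich estimates from \cite{FabKenVer} for $\cK_T$ with the smallness of $\eta^{d-1}\cR^\eta_T$ and a careful handling of the (finite-dimensional) kernel direction that is ``opened up'' by the perturbation; and (ii) isolating cleanly the $O(\eta^{-1})$ mean for $d=2$ versus the $O(1)$ density, i.e. separating the logarithmic constant mode from the genuinely bounded oscillatory part — this is the dimension-dependent subtlety that the lemma is really encoding, and getting the constants $A_T e_k$, $r^\eta_k$, $\langle g\rangle_{\partial T}$ to all fit together consistently is the delicate part.
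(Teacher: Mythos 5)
Your overall route is the paper's: represent $\Phi^\eta_k$ by a periodic double-layer potential plus a constant, split the boundary data using the decomposition \eqref{eq:L2dec} so that $A_Te_k=\Pi_0[-\Gamma_k\rvert_{\partial T}]$ is the leading constant while the periodic remainder contributes $O(\eta^{d-2})$, and treat $\cK^\eta_T=\cK_T+\eta^{d-1}\cR^\eta_T$ perturbatively. However, the step you lean on for the key bound $\|\tilde g\|_{L^2(\partial T)}\le C$ --- that $(-\tfrac12 I+\cK^\eta_T)^{-1}$ is uniformly bounded on $L^2(\partial T)$ for small $\eta$ --- is false: by Lemma \ref{lem:Kper}(1), $(-\tfrac12 I+\cK^\eta_T)[e_k]=-\eta^d|T|e_k$, so the inverse has norm at least of order $\eta^{-d}$ on the constant directions, and no Neumann-series argument can give an $\eta$-independent bound on all of $L^2(\partial T)$. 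Relatedly, writing the integral equation directly for a density you call $\tilde g$ and asserting it is mean-zero is not legitimate: the unique $L^2$ solution of $(-\tfrac12 I+\cK^\eta_T)[g]=h^\eta_k$ generically has a mean of size $\eta^{-1}$ (for every $d\ge 2$), which is precisely why the lemma separates $g=\langle g\rangle_{\partial T}+\tilde g$.

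The paper's fix is the projected system \eqref{eq:getaform}: split $g=\langle g\rangle+\tilde g$, use $(-\tfrac12 I+\cK^\eta_T)[\langle g\rangle]=-\eta^d|T|\langle g\rangle$, and apply $\Pi_1$ and $\Pi_0$ to $(-\tfrac12 I+\cK_T+\eta^{d-1}\cR^\eta_T)[\tilde g]-\eta^d|T|\langle g\rangle=h^\eta_k$. The $\Pi_1$-component is a small perturbation of $-\tfrac12 I+\cK_T$ acting where it \emph{is} uniformly invertible, giving $\|\tilde g\|_{L^2}\le C\|h^\eta_k\|_{L^2}$; the $\Pi_0$-component yields $\langle g\rangle=(\eta^d|T|)^{-1}\eta^{d-1}\Pi_0\cR^\eta_T[\tilde g]=O(\eta^{-1})$. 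Two further points in your sketch of the mean need repair: pairing the equation against $e_\ell\in\ker(-\tfrac12 I+\cK_T)$ does not isolate $\langle g\rangle$, since the adjoint identity would involve $\cK^{\eta,*}_T$ rather than $\cK^\eta_T$; one must pair against the $\phi_\ell$'s, i.e.\ apply $\Pi_0$ via \eqref{eq:proj0}. And the $\eta^{-1}$ is not a $d=2$ logarithmic effect with a better bound available for $d\ge 3$: it arises for all $d\ge 2$ from the coupling just described (the $\eta^{d-1}$-size periodic remainder pushed into the near-kernel direction, divided by the eigenvalue $-\eta^d|T|$). With these corrections your outline coincides with the paper's proof: the constant $\cD^\eta_T[\langle g\rangle]=-\eta^d|T|\langle g\rangle=O(\eta^{d-1})$ is absorbed with $\tilde r^{\,\eta}_k=O(\eta^{d-2})$ into $r^\eta_k$, and $\cP^\eta_T[g]=\cP^\eta_T[\tilde g]$ gives the pressure formula.
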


\begin{proof}\ \\
\noindent\underline{Step 1: a decomposition of the boundary data.}\\
In view of the jump condition \eqref{eq:cDeJump} and the invertibility established in Lemma \ref{lem:Kper}, the equations \eqref{eq:perext} can be solved by periodic double-layer potentials. However, in order to prove the estimates, we first decompose the boundary data $-G^\eta_k\rvert_{\partial T}$ according to \eqref{eq:L2dec}. Recall the projection operators in Proposition \ref{prop:L2dec}; set
\begin{equation}
\label{eq:Gbddec}
-G^\eta_k = c^\eta_k + h^\eta_k, \qquad c^\eta_k = \Pi_0 [-G^\eta_k\rvert_{\partial T}], \; h^\eta_k = \Pi_1 [-G^\eta_k\rvert_{\partial T}].
\end{equation}
Then according to \eqref{eq:proj0} and the perturbative formula \eqref{eq:Gperturb}, we have
\begin{equation*}
\begin{aligned}
&c^\eta_k = \Pi_0[-G^\eta_k\rvert_{\partial T}] = \Pi_0[-\Gamma_k\rvert_{\partial T}] + {\tilde r}^{\,\eta}_k = A_T e_k + {\tilde r}^{\,\eta}_{k},\\
&({\tilde r}^{\,\eta}_k)^\ell = -\eta^{d-2} \langle \phi_\ell, R_k(\eta x)\rangle_{L^2(\partial T)},
 \end{aligned}
\end{equation*}
where $R_k$ is defined by \eqref{e.defRk}. In view of the uniform boundedness of $R_k$, we check that $|\tilde r^{\,\eta}_k| \le C\eta^{d-2}$ for all $k$, for some universal constant $C$.

\smallskip

\noindent\underline{Step 2: layer-potential formulation for the two-scale cell problem.}\\ The previous step suggests that the velocity field of \eqref{eq:perext} can be solved by $\Phi^\eta_k(y) = A_T e_k + \tilde r^{\,\eta}_k + \cD^\eta_T[g](y)$ and $\psi^\eta_k(y) = \cP^\eta_T[g](y)$. Indeed, by the trace formula \eqref{eq:cDeJump}, we only need to find $g \in L^2(\partial T)$ so that
\begin{equation*}
(-\tfrac12 I + \cK^\eta_T)[g] = h^\eta_k, \qquad \text{in } L^2(\partial T).
\end{equation*}
We further write $g$ as the sum of its mean value $\langle g\rangle_{\partial T}$ and its fluctuation part $\tilde g= g - \langle g\rangle$, the lattering being in $L^2_0(\partial T)$. In view of Lemma \ref{lem:Kper}, we rewrite the equation above as
\begin{equation*}
(-\tfrac12 I + \cK_T + \eta^{d-1}\cR^\eta_T)[\tilde g] -\eta^d|T|\langle g\rangle = h^\eta_k, \qquad \text{in } L^2(\partial T).
\end{equation*}

\noindent\underline{Step 3: solution and estimates for $g$.}\\
Let the projection operators $\Pi_1$ and $\Pi_0$ act on the equation above. We get
\begin{equation}
\label{eq:getaform}
\begin{aligned}
&(-\tfrac12 I + \cK_T + \eta^{d-1}\Pi_1\cR^\eta_T)[\tilde g] = h^\eta_k,\\
&\eta^{d-1}\Pi_0\cR^\eta_T[\tilde g] - \eta^d|T|\langle g\rangle = 0.
\end{aligned}
\end{equation}
Since the operator $\cR^\eta_T$ is uniformly bounded independent of $\eta$, for $\eta$ sufficiently small, the operator on the left hand side in the first line is a small perturbation of $-\frac12 I + \cK_T$, which is invertible on $L^2_0$. By the standard perturbation theory, we get
\begin{equation*}
\tilde g = (-\tfrac12 I + \cK_T + \eta^{d-1}\Pi_1\cR^\eta_T)^{-1}[h^\eta_k], \qquad \text{and} \quad \|\tilde g\|_{L^2(\partial T)} \le C\|h^\eta_k\|_{L^2(\partial T)}.
\end{equation*}
The estimate for $\langle g\rangle$ then follows from the second equation of \eqref{eq:getaform}. For $x\in \eta^{-1}\bT^d\setminus \ol T$, the contribution $\cD^\eta_T[\langle g\rangle]$ is the constant $-\eta^d|T|\langle g\rangle$ which is of order $O(\eta^{d-1})$. Add this constant to $\tilde r^\eta_\eta$ to define the constant $r^\eta_k$, which is still of order $O(\eta^{d-2})$. 

For $\omega^\eta_k$, we note that $\psi^\eta_k = \cP^\eta_T[g] = \cP^\eta_T[\tilde g]$ and $(\Phi^\eta_k,\psi^\eta_k)$ solves the problem \eqref{eq:perext}. Hence, the second line of \eqref{eq:etachi} holds. 
The proof is now complete.
\end{proof}

The formulas \eqref{eq:chiform} specify the values of $(\chi^\eta_k,\omega^\eta_k)$ in the perforated torus $\eta^{-1}\bT^d\setminus \ol T$, and we extend their values by zero in $T$. 

\begin{corollary}
\label{coro:chi}
Suppose that the assumptions of Lemma \ref{lem:chiomega} hold. Let $\langle \chi^\eta_k\rangle$ and $\langle \omega^\eta_k \rangle$ denote the mean values of $\chi^\eta_k$ and $\omega^\eta_k$ in $\frac1\eta \bT^d$. Then there exists a universal constant $C$, such that the inequalities
\begin{equation}
\label{eq:chiunif}
\left\| \nabla \chi^\eta_k \right\|_{L^2(\frac1\eta \bT^d)} + \left| \langle \chi^\eta_k\rangle \right| \le \begin{cases}
C \;&\text{if } d\ge 3\\
C|\log\eta|^{\frac12} \;&\text{if } d = 2,
\end{cases}\qquad
\left|\langle \omega^\eta_k\rangle\right| \le C\eta^d,
\end{equation}
\begin{equation}
\label{eq:omeavg}
\text{and}\quad \|\omega^\eta_k - \langle \omega^\eta_k\rangle \|_{L^2(\eta^{-1}\bT^d)} \le \begin{cases} C, \quad &\text{if } d\ge 3,\\
C|\log \eta|^{\frac12}, \quad &\text{if } d =2.
\end{cases}
\end{equation}
hold for all $k=1,\dots,d$. Moreover, when $d\ge 3$, we also have
\begin{equation}
\label{eq:chiavg}
\left|\langle\chi^\eta_k\rangle - A_T e_k \right| \le C\eta^{d-2}.
\end{equation}
\end{corollary}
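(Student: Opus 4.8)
The plan is to derive every estimate directly from the representation formula \eqref{eq:etachi} of Lemma~\ref{lem:chiomega}, together with the quantitative bounds $|r^\eta_k|\le C\eta^{d-2}$ and \eqref{eq:gbdd}, and the perturbative decompositions $G^\eta_k(x)=\Gamma_k(x)+\eta^{d-2}R_k(\eta x)$, $P^\eta_k(x)=\theta_k(x)+\eta^{d-1}h_k(\eta x)$ on the fundamental cell, $\cD^\eta_T=\cD_T+\eta^{d-1}\cR^\eta_T$ and its pressure analogue, where $R_k,h_k$ are bounded on $\ol Q_1$, $G_k,P_k$ have zero mean on $\bT^d$, and $\cR^\eta_T$ has a uniformly bounded integral kernel. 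The one recurring device is the substitution $y=\eta x$, which trades the large torus $\tfrac1\eta\bT^d$ for the unit torus $\bT^d$ with the shrunk hole $\eta\ol T$; one then exploits the homogeneity of $\Gamma_k,\theta_k$ (degrees $2-d$ and $1-d$) near the origin and the uniform bounds on $R_k,h_k$, together with the facts that $\chi^\eta_k$ and $\omega^\eta_k$ vanish in $T$ and $|\tfrac1\eta\bT^d|=\eta^{-d}$. I describe the argument for $d\ge 3$, where all exponents are favorable; the case $d=2$ runs identically once one tracks the logarithmic contributions coming from the logarithm in $\Gamma_k$ and the corresponding rescaling identity of Remark~\ref{rem:ATpositive}, which is what turns the bounded right-hand sides into their $|\log\eta|^{1/2}$ counterparts.

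For $\|\nabla\chi^\eta_k\|_{L^2(\frac1\eta\bT^d)}$ I would argue by an energy estimate: the pair $(\Phi^\eta_k,\psi^\eta_k)=(\chi^\eta_k-G^\eta_k,\omega^\eta_k-P^\eta_k)$ solves the homogeneous Stokes problem \eqref{eq:perext}, and testing it against $\chi^\eta_k\in H^1_0(\tfrac1\eta\bT^d\setminus\ol T)$ (divergence free, with zero trace on $\partial T$) kills the pressure and boundary terms and leaves $\|\nabla\chi^\eta_k\|_{L^2}^2=\int\nabla\chi^\eta_k:\nabla G^\eta_k$, hence $\|\nabla\chi^\eta_k\|_{L^2(\frac1\eta\bT^d)}\le\|\nabla G^\eta_k\|_{L^2(\frac1\eta\bT^d\setminus\ol T)}$. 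Rescaling, the right-hand side equals $\eta^{\frac{d-2}{2}}\|\nabla G_k\|_{L^2(\bT^d\setminus\eta\ol T)}$, and splitting $G_k=\Gamma_k+R_k$ with $|\nabla\Gamma_k|\lesssim|x|^{1-d}$ and $\nabla R_k$ bounded gives $\|\nabla G_k\|_{L^2(\bT^d\setminus\eta\ol T)}^2\lesssim\int_{c\eta}^{c}r^{1-d}\,dr+1\lesssim\eta^{2-d}$, so that $\|\nabla\chi^\eta_k\|_{L^2}\le C$ for $d\ge 3$. (Alternatively one writes $\nabla\chi^\eta_k=\nabla G^\eta_k+\nabla\cD^\eta_T[\tilde g]$ from \eqref{eq:etachi} and invokes the standard mapping bound $\|\nabla\cD_T[\tilde g]\|_{L^2(\R^d\setminus\ol T)}\le C\|\tilde g\|_{L^2(\partial T)}\le C$ from Section~\ref{sec.mapping} and \cite{FabKenVer}, plus the $\eta^{d/2-1}$-small periodic correction coming from $\cR^\eta_T$.)

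The mean values are where the representation formula is essential. Writing $\langle\chi^\eta_k\rangle=\eta^d\int_{\frac1\eta\bT^d\setminus\ol T}\chi^\eta_k$ and substituting \eqref{eq:etachi}: the constant term contributes $(A_Te_k+r^\eta_k)\,\eta^d|\tfrac1\eta\bT^d\setminus\ol T|=(A_Te_k+r^\eta_k)(1-\eta^d|T|)$; since $\int_{\frac1\eta\bT^d}G^\eta_k=0$, the $G^\eta_k$ term contributes $-\eta^d\int_{\ol T}G^\eta_k=-\eta^{d-2}\int_{\eta\ol T}G_k$, which is $O(\eta^d)$ because $\int_{\eta\ol T}|G_k|\lesssim\int_0^{c\eta}r\,dr+\eta^d\lesssim\eta^2$; and the double-layer term is $O(\eta^{d-1})$ since $\int_{\frac1\eta\bT^d\setminus\ol T}|\cD_T[\tilde g]|\lesssim\int_1^{c\eta^{-1}}r^{1-d}r^{d-1}\,dr+C\lesssim\eta^{-1}$ together with $\eta^{d-1}\|\cR^\eta_T[\tilde g]\|_{L^1(\frac1\eta\bT^d)}\lesssim\eta^{-1}$. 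This gives $\langle\chi^\eta_k\rangle=A_Te_k+r^\eta_k+O(\eta^{d-1})$, hence both $|\langle\chi^\eta_k\rangle|\le C$ and, for $d\ge 3$, the sharper \eqref{eq:chiavg}. For $\langle\omega^\eta_k\rangle=\eta^d\int_{\frac1\eta\bT^d\setminus\ol T}(P^\eta_k+\cP^\eta_T[\tilde g])$ the key point is that \emph{both} summands have zero torus-mean: $P^\eta_k$ by normalization, and $\cP^\eta_T[\tilde g]$ because $\int_{\frac1\eta\bT^d}P^\eta_k(x-y)\,dx=0$ for every $y$, so differentiating under the integral sign in the definition of $\cP^\eta_T$ yields $\int_{\frac1\eta\bT^d}\cP^\eta_T[\tilde g]=0$. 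Consequently each summand equals minus its integral over $\ol T$, and those are $O(1)$: $|\int_{\ol T}P^\eta_k|=\eta^{-1}|\int_{\eta\ol T}P_k|\lesssim\eta^{-1}(\eta+\eta^d)\lesssim 1$ and $|\int_{\ol T}\cP^\eta_T[\tilde g]|\le|T|^{1/2}\|\cP^\eta_T[\tilde g]\|_{L^2(T)}\le C\|\tilde g\|_{L^2(\partial T)}\le C$. Hence $|\langle\omega^\eta_k\rangle|\le C\eta^d$.

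Finally, $\|\omega^\eta_k-\langle\omega^\eta_k\rangle\|_{L^2(\frac1\eta\bT^d)}\le\|\omega^\eta_k\|_{L^2(\frac1\eta\bT^d)}+|\langle\omega^\eta_k\rangle|\,|\tfrac1\eta\bT^d|^{1/2}$, the last term being $\le C\eta^{d/2}$, while $\|\omega^\eta_k\|_{L^2(\frac1\eta\bT^d)}=\|P^\eta_k+\cP^\eta_T[\tilde g]\|_{L^2(\frac1\eta\bT^d\setminus\ol T)}$ since $\omega^\eta_k$ vanishes in $T$. Using $\ol T\supset B_{1/16}$ to stay away from the singularity, $\int_{\frac1\eta\bT^d\setminus\ol T}|\theta_k|^2\lesssim\int_{1/16}^{c\eta^{-1}}r^{2-2d}r^{d-1}\,dr=\int_{1/16}^{c\eta^{-1}}r^{1-d}\,dr\lesssim 1$ for $d\ge 3$, $\eta^{2(d-1)}\int_{\frac1\eta\bT^d}|h_k(\eta\,\cdot)|^2=\eta^{d-2}\|h_k\|_{L^2(\bT^d)}^2\lesssim 1$, and $\|\cP_T[\tilde g]\|_{L^2(\R^d\setminus\ol T)}\le C\|\tilde g\|_{L^2(\partial T)}\le C$ (mapping bound plus the $|x|^{-d}$ decay visible in \eqref{eq:cPT}), the periodic correction again $\eta^{d/2-1}$-small; this yields \eqref{eq:omeavg}. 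For $d=2$ the scheme is unchanged, but the integrals $\int r^{1-d}\,dr$ that were $O(\eta^{2-d})$, resp.\ $O(1)$, now carry a factor $|\log\eta|$, and the extra $\tfrac{\log\eta}{4\pi}e_k$ term in $G^\eta_k$ (Remark~\ref{rem:ATpositive}) feeds into the constant part; tracking these factors produces the right-hand sides in \eqref{eq:chiunif} and \eqref{eq:omeavg}. \emph{The main obstacle} is technical, not conceptual: one has to keep the dimension-dependent behaviour near the hole (harmless powers of $\eta$ for $d\ge 3$, genuine logarithms for $d=2$) correctly bookkept in every term, and to invoke the $L^2$ mapping estimates $\|\nabla\cD_T[\tilde g]\|_{L^2(\R^d\setminus\ol T)}+\|\cP_T[\tilde g]\|_{L^2(\R^d\setminus\ol T)}\le C\|\tilde g\|_{L^2(\partial T)}$, together with the $\eta$-smallness of the periodic perturbations $\cR^\eta_T$ and its pressure analogue; the energy-estimate route of the second paragraph removes the layer-potential input for the gradient bound only, not for the pressure estimates.
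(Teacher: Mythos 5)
Your treatment of the velocity part is fine, and in places takes a legitimately different route from the paper: the comparison/energy argument giving $\|\nabla\chi^\eta_k\|_{L^2}\le\|\nabla G^\eta_k\|_{L^2(\eta^{-1}\bT^d\setminus\ol T)}$ (the paper instead tests \eqref{eq:etacell} with $\chi^\eta_k$ and uses the Poincar\'e inequality), and the computation of $\langle\chi^\eta_k\rangle$ using the zero torus-mean of $G^\eta_k$ together with the pointwise decay of $\cD_T[\tilde g]$ (the paper estimates $\int\Gamma_k$ and converts $\int\cD^\eta_T[\tilde g]$ into a double boundary integral) do yield the $\chi$-part of \eqref{eq:chiunif} and \eqref{eq:chiavg}. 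The gap is in everything you do with the double-layer \emph{pressure} $\cP^\eta_T[\tilde g]$, which is precisely where the paper has to argue differently. First, the identity $\int_{\eta^{-1}\bT^d}\cP^\eta_T[\tilde g]=0$ obtained by ``differentiating under the integral sign'' is not justified: the kernel $N^i_y\partial_{y_i}P^\eta_k(y-x)$ is of order $|x-y|^{-d}$, hence not absolutely integrable in $x$ across $\partial T$, so Fubini does not apply. In fact, writing $\cP^\eta_T[\tilde g]=\partial_{x_k}\cD^\eta_{T,\Delta}[\tilde g^k]$ (via \eqref{eq:PkGdel}, as the paper does) and using the jump of the Laplace double layer across $\partial T$, the full-torus integral equals a boundary term of the form $\int_{\partial T}N\cdot\tilde g\,d\sigma$, which has no reason to vanish. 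This particular slip is quantitatively harmless (that term is $O(1)$), but it shows the pressure must be handled through the divergence theorem and trace formulas rather than by naive integration.

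Second, and this is the substantive gap: your bound $|\int_{T}\cP^\eta_T[\tilde g]|\le|T|^{1/2}\|\cP^\eta_T[\tilde g]\|_{L^2(T)}$ and, above all, your whole proof of \eqref{eq:omeavg} rest on the mapping bound $\|\cP_T[\tilde g]\|_{L^2(T)}+\|\cP_T[\tilde g]\|_{L^2(\R^d\setminus\ol T)}\le C\|\tilde g\|_{L^2(\partial T)}$. No such bound is available from Sections \ref{sec.layerpot}--\ref{sec.mapping} (which give only $L^2(\partial T)$ boundedness of $\cK_T,\cK_T^*$ and trace formulas), and as a general statement it fails for densities that are merely $L^2$: the double-layer pressure has a kernel of order $-d$, i.e.\ it scales like one derivative of a layer potential, so near $\partial T$ it behaves like $\mathrm{dist}(x,\partial T)^{-1}$ times local averages of the density and is square integrable up to the boundary only for densities with extra regularity (morally $H^{1/2}(\partial T)$), which \eqref{eq:gbdd} does not give. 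The paper avoids this entirely: the bound $|\langle\omega^\eta_k\rangle|\le C\eta^d$ comes from the oddness of $P^\eta_k$ plus converting $\int\cP^\eta_T[\tilde g]$ into the boundary integral of the trace $\bigl(-\frac12 I+\cK^\eta_{T,\Delta}\bigr)[\tilde g^k]$ by the divergence theorem (your $\int_T\cP^\eta_T[\tilde g]$ bound can be repaired the same way), while \eqref{eq:omeavg} is proved by an entirely different mechanism, namely the restriction/extension (Bogovskii-type duality) argument of Appendix \ref{app.a} rescaled to $\eta^{-1}\bT^d\setminus\ol T$, which bounds the extended pressure by energy quantities already under control. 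Without that duality argument, or a genuine uniform-in-$\eta$ regularity upgrade for $\tilde g$, your proof of \eqref{eq:omeavg} does not close.
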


\begin{proof}\ \\
\noindent\underline{Step 1: the proofs for \eqref{eq:chiunif}.}\\
Integrate $\chi^\eta_k$ against the first equation in \eqref{eq:etacell} and apply the Poincar\'e inequality. This results in the estimates for $\chi^\eta_k$ in \eqref{eq:chiunif}. To control the mean value of $\omega^\eta_k$, in view of the formula \eqref{eq:etachi}, it suffices to show the integrals of $P^\eta_k$ and of $\cP^\eta_T[\tilde g]$ over the torous $\eta^{-1}\bT^d$ are bounded uniform in $\eta$. In view of the oddness of $P^\eta_k$ and the fact that $\ol T\subset B_1$,
\begin{equation*}
\int_{\eta^{-1}\bT^d\setminus \ol T} P^\eta_k(y) dy = \int_{B_1\setminus \ol T} P^\eta_k(y) dy \le C,
\end{equation*}
where $C$ depends only on $d$. For the integral of $\cP^\eta_T[\tilde g]$, we compute relying on \eqref{eq:PkGdel}
\begin{equation*}
\int_{\eta^{-1}\bT^d\setminus \ol T} \cP^\eta_T[\tilde{g}](x) dx = \int_{\eta^{-1}\bT^d\setminus \ol T} \int_{\partial T} N_y^i \partial_{y_i}\left(-(\partial_{k} G^\eta_\Delta)(y-x)\right)\tilde{g}^k(y) d\sigma_y dx.
\end{equation*}  
In the integral on the right hand side above, we can rewrite $\partial_kG^\eta_\Delta(y-x)$ as $-\partial_{x_k}(G^\eta_\Delta(y-x))$ and then take $\partial_x$ outside of the integral over $\partial T$. We get
\begin{equation*}
-\int_{\eta^{-1}\bT^d\setminus \ol T} \cP^\eta_T[\tilde g ](x) dx = -\int_{\eta^{-1}\bT^d\setminus \ol T} \partial_{x_k} \cD^\eta_{T,\Delta}[\tilde g^k](x) dx,
\end{equation*}
where $\cD^\eta_{T,\Delta}[\tilde g^k]$ is the double-layer potential associated to the Lapace equation in $\eta^{-1}\bT^d$. Finally, by the divergence theorem, the integral above becomes
\begin{equation*}
\int_{\partial T} N^k_y \cD^\eta_{T,\Delta}[\tilde g^k]\big\rvert_+(y) d\sigma_y = \int_{\partial T} N^k_y \left(-\frac12 I + \cK^\eta_{T,\Delta}\right)[\tilde g^k](y) \,d\sigma_y.
\end{equation*}
Here, we used the trace formula for the double-layer potential $\cD^\eta_{T,\Delta}$. For $\eta \in (0,1)$, the operator $-\frac12 I + \cK^\eta_{T,\Delta}$ is uniformly bounded in $\mathcal{L}(L^2(\partial T))$ (more properties are established in \cite{Jing20}), and by \eqref{eq:gbdd}, the above is uniformly bounded in $\eta$. Hence, the second inequality of \eqref{eq:chiunif} holds.

\smallskip

\noindent\underline{Step 2: the proof of \eqref{eq:chiavg}.}\\
For the average of $\chi^\eta_k$, we compute from \eqref{eq:etachi} and \eqref{eq:Gperturb}
\begin{equation*}
\langle \chi^\eta_k\rangle - A_T e_k = \frac{1}{|\eta^{-1}\bT^d|} \int_{\eta^{-1}\bT^d} \Gamma_k(x) + \cD^\eta_T[\tilde g](x) \,dx + O(\eta^{d-2}).
\end{equation*}
Here $\tilde g \in L^2_0(\partial T)$ and $\|\tilde g\|_{L^2(\partial T)}$ is bounded by a universal constant.\label{pageref.22} For the integral of $\cD^\eta_T[\tilde g]$, we compute
\begin{equation*}
\begin{aligned}
\int_{\eta^{-1}\bT^d\setminus \ol T} \cD^\eta_T[\tilde g](x) dx &= \int_{\eta^{-1}\bT^d\setminus \ol T} \int_{\partial T} [-P^\eta_k(y-x)N^i_y + N^\ell_y \partial_{y_\ell} (G^\eta_k(y-x))^i](\tilde g(y))^i dy\\
&= \int_{\eta^{-1}\bT^d\setminus \ol T} \int_{\partial T} [\partial_{x_k} G^\eta_\Delta(y-x) N^i_y - N^\ell_y \partial_{x_\ell} (G^\eta_k(y-x))^i](\tilde g(y))^i dy\\
&=  \int_{\partial T} \int_{\partial T} [-G^\eta_\Delta(y-x)N^k_x N^i_y + N^\ell_y N^\ell_x (G^\eta_k(y-x))^i](\tilde g(y))^i dydx.
\end{aligned}
\end{equation*}
In view of the uniform (in $\eta$ and in $y$) integrability of $G^\eta_\Delta(y-x)$ and $G^\eta_k(y-x)$ over $x \in \partial T$, the integral above is uniformly bounded. 

For the integral of $\Gamma_k$, we only consider the case $d\ge 3$. By the estimate $|\Gamma_k(x)|\le C|x|^{-(d-2)}$ (for $d\ge 3$) and a direct computation, we have
\begin{equation*}
\left|\int_{\eta^{-1}\bT^d\setminus \ol T} \Gamma_k(x)\,dx \right| \le C\eta^{-2}.
\end{equation*}
We then conclude that \eqref{eq:chiavg} holds for $d\ge 3$.

\smallskip

\noindent\underline{Step 3: the proof for \eqref{eq:omeavg}.}\\
We need to use the restriction and extension operators in $\eta^{-1}\bT^d\setminus \ol T$. By rescaling the results in Proposition \ref{prop:restr}, we get a restriction operator $\cR$ for vector fields $H^1(\eta^{-1}\bT^d)$ that satisfies
\begin{equation*}
\|\nabla(\mathcal{R}u)\|_{L^2(\frac1\eta \bT^d \setminus \ol T)} 
\le C\left[\|\nabla u\|_{L^2(\frac{1}{\eta} \bT^d)} + \eta \kappa_\eta\|u\|_{L^2(\frac1\eta \bT^d)}\right],
\end{equation*}
where $C$ depends only on $d$ and $T$. This restriction operator then defines an extension operator $\mathcal{E}$ for mean-zero pressure fields in $L^2_0(\eta^{-1}\bT^d\setminus \ol T)$; for $p \in L^2_0(\eta^{-1}\bT^d\setminus \ol T)$, $\mathcal{E}p$ is determined by the relation
\begin{equation*}
\int_{\eta^{-1}\bT^d} (\mathcal{E}p) \,\nabla\cdot u = \int_{\eta^{-1}\bT^d \setminus \ol T} p\,\nabla\cdot\,(\mathcal{R} u), \qquad \forall u \in H^1(\eta^{-1}\bT^d).
\end{equation*}
Let $\langle \omega^\eta_k\rangle_{\bT^d_{\eta,f}}$ denote the mean value of $\omega^\eta_k$ in $\eta^{-1}\bT^d \setminus \ol T$. In view of the second inequality in \eqref{eq:chiavg} and that the volume fraction of the hole is of order $\eta^d$, we check that $|\langle \omega^\eta_k\rangle_{\bT^d_{\eta,f}}|$ is of order $O(\eta^d)$. By repeating the arguments that lead to \eqref{eq:punif}, we can find a universal constant $C$ such that
\begin{equation}
\label{eq:omeflbdd}
\|\mathcal{E} (\omega^\eta_k - \langle \omega^\eta_k\rangle_{\bT^d_{\eta,f}})\|_{L^2(\frac1\eta \bT^d)} \le \begin{cases} C \quad &\text{if } d\ge 3,\\
C|\log \eta|^{\frac12} \quad &\text{if } d=2.
\end{cases}
\end{equation}
It is also clear that $\mathcal{E}(\omega^\eta_k-\langle \omega^\eta_k\rangle_{\bT^d_{\eta,f}})$ and $\omega^\eta_k - \langle \omega^\eta_k\rangle$ only differ by a constant in $\eta^{-1}\bT^d\setminus \ol T$. Moreover, this constant is precisely
\begin{equation*}
\langle \omega^\eta_k\rangle - \langle \omega^\eta_k\rangle_{\bT^d_{\eta,f}}-\left|\eta^{-1}\bT^d\setminus \ol{T}\right|^{-1}  \int_T \mathcal{E} (\omega^\eta_k -\langle \omega^\eta_k\rangle),
\end{equation*}
which is of order $O(\eta^d)$. Hence, in \eqref{eq:omeflbdd} we can replace the extended function by $\omega^\eta_k-\langle \omega^\eta_k\rangle$. This proves \eqref{eq:omeavg}.
\end{proof}

\subsection{Asymptotic limits and quantitative estimates}

Thanks to the explicit formulas for the solutions $\chi^\eta_k$ and $\omega^\eta_k$ to the cell problem \eqref{eq:etacell}, we are able to characterize the limits of $(v^\eps_k,q^\eps_k)$, and, furthermore, to quantify some of the convergence results. We first prove Theorem \ref{lem:cellqual} stated in the Introduction.

\begin{proof}[Proof of Theorem \ref{lem:cellqual}]\ \\
\underline{Step 1: estimates for $\nabla v^\eps_k$.}\\
By periodicity we decompose the defining integral of $\|\nabla v^\eps_k\|^2_{L^2(\Omega)}$ into pieces over $\eps$-cubes that intersect $\Omega$. Using the definition \eqref{eq:vqeps} and by rescaling the estimate of $\|\nabla \chi^\eta_k\|^2_{L^2}$ in \eqref{eq:chiunif}, we see that $\|\nabla v^\eps_k\|^2_{L^2}$ in each $\eps$-cube is of order $O(\eps^{d-2} \kappa_\eta^2)$. The desired result then follows by counting the number of $\eps$-cubes in $\Omega$, which is of order $O(\eps^{-d})$. 

\smallskip

\noindent\underline{Step 2: weak convergence results for $\nabla v^\eps_k$ in the critical setting.}\\
The result is due to the periodicity of $v^\eps_k$ and can be viewed as a version of the Riemann-Lebesgue lemma. Because $\nabla v^\eps$ is uniformly bounded in $L^2(\Omega)$, it suffices to prove, for each $\ell \in \{1,\dots,d\}$,
\begin{equation*}
\int_{\R^d} \partial_\ell v^\eps_k(x) \varphi(x) dx \to 0, \qquad \forall \varphi \in C^\infty_c(\Omega;\R).
\end{equation*}
For each fixed $\eps$, let $\mathcal{I}_\eps$ be the set of indices $j \in \Z^d$ such that $Q_{\eps,j} = \eps(j+Q_1)$ is contained in $\Omega$, and let $\mathcal{J}_\eps$ be the set of $j$ such that $Q_{\eps,j} \cap \partial \Omega$ is non-empty. Hence, $\mathcal{I}_\eps$ and $\mathcal{J}_\eps$, take into account the interior cubes and the boundary $\eps$-cubes, respectively, that have non-empty intersection with $\Omega$. Let $d\ge 3$ for the moment. The above integral is then rewritten as
\begin{equation*}
\sum_{i\in \mathcal{I_\eps}} \int_{Q_{\eps,i}} \partial_\ell v^\eps_k \varphi +  \sum_{j\in \mathcal{J_\eps}} \int_{Q_{\eps,j} \cap \Omega} \partial_\ell v^\eps_k \varphi \,=\, (\eps\eta)^{d-1}\sum_{i\in \mathcal{I_\eps}} \int_{Q_{\frac1\eta,i}} (\partial_\ell \chi^\eta_k)(y) \varphi(\eps \eta y)\,dy 
+ \sum_{j\in \mathcal{J_\eps}} \int_{Q_{\eps,j} \cap \Omega} \partial_\ell v^\eps_k \varphi.
\end{equation*}
We used the change of variable $\frac{x}{\eps\eta} \mapsto y$ in the last step for the interior integrals. Because $\chi^\eta_k$ is periodic, each of the interior integrals for $i\in \mathcal{I}_\eps$ is computed as
\begin{equation*}
\int_{Q_{\frac1\eta,i}} (\partial_\ell \chi^\eta_k)(y) \varphi(\eps \eta y) = \int_{\frac1\eta Q_1} (\partial_\ell \chi^\eta_k)(y)\left[\varphi(\eps\eta y + \eta^{-1} i) - \varphi(\eta^{-1} i)\right].
\end{equation*}
Since $\varphi$ is smooth, by Taylor's theorem we also have
\begin{equation*}
\left|\varphi(\eps\eta y + \eta^{-1} i) - \varphi(\eta^{-1} i)\right| \le \|\nabla \varphi\|_{L^\infty} |\eps \eta y| \le \eps\|\nabla \varphi\|_{L^\infty}, \qquad \forall y \in \frac1\eta Q_1.
\end{equation*}
Using the estimate \eqref{eq:chiunif}, and the fact that $\mathcal{I}_\eps$ has cardinality of order $O(\eps^{-d})$, we see that the total contribution from interior integrals is bounded by
\begin{equation*}
\left|(\eps\eta)^{d-1} \sum_{i \in \mathcal{I}_\eps} \int_{Q_{\frac1\eta,i}} (\partial_\ell \chi^\eta_k)(y) \varphi(\eps \eta y) \right| \le \eta^{d-1}\|\nabla \varphi\|_{L^\infty} |Q_{\frac1\eta}|^{\frac12} \le \eta^{\frac{d-2}{2}}\|\nabla \varphi\|_{L^\infty},
\end{equation*}

For a typical boundary cube $Q_{\eps,j}$, with $j \in \mathcal{J}_\eps$, the integral over $Q_{\eps,j}\cap \Omega$ is controlled by
\begin{equation*}
\left|\int_{Q_{\eps,j}\cap \Omega} (\partial_\ell v^\eps_k) \varphi\right| \le \|\partial_\ell v^\eps_k\|_{L^2(Q_\eps)} \eps^{\frac d2} \|\varphi\|_{L^\infty} \le C\eps^{d-1}\eta^{\frac{d-2}{2}}.
\end{equation*}
The estimate above is uniform in $j$. Adding these estimates for $j$ in $\mathcal{J}_\eps$, which has cardinality of order $O(\eps^{-d+1}$), we see that the total contribution from the boundary integrals is of order $O(\eta^{\frac{d-2}{2}})$. We hence proved \eqref{eq:gradvconv} for $d\ge 3$. 

When $d=2$, there is a further factor of order $|\log\eta|^{-1}$ in the definition of $v^\eps_k$ in terms of $\chi^\eta_k$; see \eqref{eq:vqeps}. On the other hand, $\|\nabla \chi^\eta_k\|_{L^2(\eta^{-1}\bT^d)}$ is of order $|\log\eta|^\frac{1}{2}$. Taking those modifications into account, we verify that, given $\varphi \in C^1(\Omega)$, the integral of $(\partial_\ell v^\eps_k)\varphi$ is of order $O(|\log\eta|^{-\frac12})$ and it vanishes in the limit as $\eta$ and $\eps$ go to zero.

\smallskip

\noindent\underline{Step 3: The limit of $v^\eps_k$.}\\
We first consider the setting of $d\ge 3$ and take $p = \frac{2d}{d-2}$. We compute
\begin{equation*}
\|v^\eps_k-M^{-1}e_k\|_{L^p(\Omega)}^p  = \sum_{j\in \mathcal{I}_\eps} \int_{Q_{\eps,j}\cap \Omega} |v^\eps_k - M^{-1}e_k|^p \le \sum_{j\in \mathcal{I}_\eps} \int_{Q_{\eps,j}} |v^\eps_k - M^{-1}e_k|^p,
\end{equation*}
where $\mathcal{I}_\eps$ denotes the set of indices $j$ for which $Q_{\eps,j}$ has nonempty intersection with $\Omega$. By periodicity, the integral over $Q_{\eps,j}$ is uniform in $j$ and it is computed and controlled by 
\begin{equation}\label{e.veps-M1}
\int_{Q_\eps} |v^\eps_k-M^{-1} e_k|^p \le C\int_{Q_\eps} |v^\eps_k - \langle v^\eps_k\rangle|^p + |\langle v^\eps_k\rangle - M^{-1}e_k|^p.
\end{equation}
Here, $Q_\eps$ is the $\eps$-cube centered at $0$ and $\langle v^\eps_k\rangle$ is the average of $v^\eps_k$ in $Q_\eps$. By scaling, we check that $\langle v^\eps_k\rangle = \langle \chi^\eps_k\rangle_{\eta^{-1}\bT^d}$. The inequality \eqref{eq:chiavg} then shows
\begin{equation}\label{e.veps-M2}
\int_{Q_{\eps}} |\langle v^\eps_k\rangle - M^{-1}e_k|^p \le C\eps^d \eta^{2d}.
\end{equation}
On the other hand, we have the Sobolev embedding
\begin{equation}\label{e.veps-M3}
\int_{Q_\eps} |v^\eps_k - \langle v^\eps_k\rangle|^{p} \le C\|\nabla v^\eps_k\|_{L^2(Q_\eps)}^p = C\left[(\eps\eta)^{\frac{d-2}{2}}\|\nabla \chi^\eta_k\|_{L^2(\eta^{-1}\bT^d)}\right]^p.
\end{equation}
Here, the bounding constant $C$ is independent of $\eps$, because the Sobolev embedding with critical component $p = 2d/(d-2)$ is scaling-invariant. The integral above is hence of order $O(\eps^d\eta^d)$. Since $\mathcal{I}_\eps$ has cardinality of order $\eps^{-d}$, we combine the estimates above and check that \eqref{eq:vquant} holds for $d\ge 3$.

\smallskip

Finally we consider the setting of $d=2$. In view of the definition \eqref{eq:vqeps} and the formula \eqref{eq:etachi}, we compute and get, for $x \in Q_{\eps,j}$ but outside $\eps(j+\eta \ol T)$,
\begin{equation*}
v^\eps_k(x) - M^{-1}e_k = v^\eps_k(x) - \frac{1}{4\pi} e_k = \frac{1}{4\pi} \frac{\log|x/\eps|}{|\log \eta|} e_k + \frac{1}{|\log \eta|} \cD^\eta_T[\tilde g](\frac{x}{\eps\eta}) + O(|\log \eta|^{-1}).
\end{equation*}
Here, $\tilde g = g - \langle g\rangle_{\partial T}$ is specified in Lemma \ref{lem:chiomega}. For $x$ in the holes, $v^\eps_k$ is zero and the above is $-M^{-1}e_k$. Let $p = 2$; we have
\begin{equation}
\label{eq:vlim-1}
\|v^\eps_k - M^{-1}e_k\|^2_{L^2(\Omega)} \le 2\sum_{j\in \mathcal{I}_\eps} \int_{Q_{\eps,j}\cap \Omega} |v^\eps_k - \langle v^\eps_k\rangle|^2 + |\langle v^\eps_k\rangle - M^{-1}e_k|^2.
\end{equation}
Here, $\langle v^\eps_k\rangle$ is the average on the $\eps$-cube $Q_\eps$. The representation formula for $v^\eps_k - M^{-1}e_k$ together with rescaling show that
\begin{equation*}
\left|\langle v^\eps_k\rangle - M^{-1}e_k\right| \le C|\log \eta|^{-1}\left( 1 + \left|\langle \log |x|\rangle_{Q_1}\right| + \left|\langle \cD^\eta_T[\tilde g]\rangle_{\eta^{-1}\bT^d\setminus \ol T}\right|\right).
\end{equation*}
From the computations in the proof of Corollary \ref{coro:chi}, the right hand side above is of order $O(|\log \eta|^{-1})$. On the other hand, the usual Poincar\'e inequality shows
\begin{equation*}
\|v^\eps_k - \langle v^\eps_k\rangle \|^2_{L^2(Q_{\eps,j})} \le C\eps^2 \|\nabla v^\eps_k\|^2_{L^2(Q_{\eps,j})} = C\eps^2 |\log \eta|^{-2}\|\nabla \chi^\eta_k\|_{L^2(\eta^{-1}\bT^d)} \le C\eps^2|\log \eta|^{-1}.
\end{equation*}
We combine those results and use them in \eqref{eq:vlim-1}. Since the cardinality of $\mathcal{I}_\eps$ is of order $O(\eps^{-2})$, we check that \eqref{eq:vquant} holds also for $d=2$.
\end{proof}


For the pressure field $q^\eps_k$ defined in \eqref{eq:vqeps}, we have the following results.

\begin{lemma}
\label{lem:vq}
Under the assumptions of Theorem \ref{lem:cellqual}, there exists a universal constant $C > 0$ such that the following holds for each $k=1,\ldots\, d$.
\begin{itemize}
\item[(1)] For all $d\ge 2$, we have
\begin{equation}
\label{eq:qquant}
\|q^\eps_k\|_{L^{2}(\Omega)} \le C\sigma_\eps^{-1}.
\end{equation}

\item[(2)] We have, for $d\ge 3$,
\begin{equation}
\label{eq:qquant1}
\left|\int_\Omega q^\eps_k \phi\right| \le C\left( \eta^{\frac{d-2}{2}}\|\nabla \phi\|_{L^2(\Omega)} + \sigma_\eps^{-1}\eta^{\frac d2}\|\phi\|_{L^2(\Omega)}\right).
\end{equation}
For $d=2$, the inequality still holds with the first $\eta^{\frac{d-2}{2}}$ replaced by $|\log \eta|^{-\frac12}$.
\end{itemize}
\end{lemma}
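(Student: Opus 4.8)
The plan is to reduce both estimates, via periodicity, to bounds on a single rescaled cell, feeding in the control on $\omega^\eta_k$ provided by Corollary~\ref{coro:chi}.

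\textbf{Part (1).} First I would cover $\Omega$ by the $\eps$-cubes $Q_{\eps,j}=\eps(j+Q_1)$ meeting $\Omega$; there are $O(\eps^{-d})$ of them, and by periodicity the integral of $|q^\eps_k|^2$ over each is the same. The change of variables $y=x/(\eps\eta)$ together with the definition \eqref{eq:vqeps} gives $\int_{Q_{\eps,j}}|q^\eps_k|^2=(\eps\eta)^{d-2}\int_{\eta^{-1}\bT^d}|\omega^\eta_k|^2$ for $d\ge3$, with an extra factor $|\log\eta|^{-2}$ when $d=2$. Writing $\omega^\eta_k=(\omega^\eta_k-\langle\omega^\eta_k\rangle)+\langle\omega^\eta_k\rangle$ and using \eqref{eq:omeavg} together with $|\langle\omega^\eta_k\rangle|^2|\eta^{-1}\bT^d|\le C\eta^{d}$ from \eqref{eq:chiunif}, I get $\int_{\eta^{-1}\bT^d}|\omega^\eta_k|^2\le C$ (resp. $C|\log\eta|$ for $d=2$). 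Summing over the $O(\eps^{-d})$ cubes and using $\sigma_\eps^{-2}=\eps^{-2}\kappa_\eta^2$ with $\kappa_\eta^2=\eta^{d-2}$ (resp. $|\log\eta|^{-1}$) yields $\|q^\eps_k\|_{L^2(\Omega)}^2\le C\eps^{-d}(\eps\eta)^{d-2}=C\sigma_\eps^{-2}$ in all dimensions.

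\textbf{Part (2).} The point is that $q^\eps_k$ oscillates at scale $\eps\eta$ with near-vanishing cell average, so pairing with $\phi$ gains a power of $\eta$. I would split $\int_\Omega q^\eps_k\phi$ over the $\eps$-cubes, separating the interior cubes $Q=Q_{\eps,j}\subset\Omega$ from the $O(\eps^{1-d})$ boundary cubes, and on an interior cube use $\int_Q q^\eps_k\phi=\int_Q(q^\eps_k-\langle q^\eps_k\rangle_Q)(\phi-\langle\phi\rangle_Q)+|Q|\langle q^\eps_k\rangle_Q\langle\phi\rangle_Q$. For the first term: Cauchy--Schwarz, the Poincar\'e inequality $\|\phi-\langle\phi\rangle_Q\|_{L^2(Q)}\le C\eps\|\nabla\phi\|_{L^2(Q)}$, and the rescaled bound $\|q^\eps_k-\langle q^\eps_k\rangle_Q\|_{L^2(Q)}^2=(\eps\eta)^{d-2}\|\omega^\eta_k-\langle\omega^\eta_k\rangle\|_{L^2(\eta^{-1}\bT^d)}^2\le C(\eps\eta)^{d-2}$ (resp. $C|\log\eta|^{-1}$ for $d=2$), followed by a Cauchy--Schwarz sum over the $O(\eps^{-d})$ cubes, produce the first term $C\eta^{(d-2)/2}\|\nabla\phi\|_{L^2(\Omega)}$ (resp. $C|\log\eta|^{-1/2}\|\nabla\phi\|_{L^2(\Omega)}$). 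For the second term: $|\langle q^\eps_k\rangle_Q|=(\eps\eta)^{-1}|\langle\omega^\eta_k\rangle|\le C\eta^{d-1}/\eps$ by \eqref{eq:chiunif}, and $|\langle\phi\rangle_Q|\le\eps^{-d/2}\|\phi\|_{L^2(Q)}$, so each cube contributes $\le C\eps^{d/2-1}\eta^{d-1}\|\phi\|_{L^2(Q)}$; summing via Cauchy--Schwarz over $O(\eps^{-d})$ cubes gives $C\eps^{-1}\eta^{d-1}\|\phi\|_{L^2(\Omega)}=C\sigma_\eps^{-1}\eta^{d/2}\|\phi\|_{L^2(\Omega)}$, since $\sigma_\eps^{-1}\eta^{d/2}=\eps^{-1}\kappa_\eta\eta^{d/2}=\eps^{-1}\eta^{d-1}$ for $d\ge3$ (and the same identity modulo replacing $|\log\eta|^{-1}$ by the larger $|\log\eta|^{-1/2}$ for $d=2$).

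\textbf{Boundary cubes and the main obstacle.} The interior analysis is routine; the delicate part is the $O(\eps^{1-d})$ boundary cubes, where only the crude bound $|\int_{Q\cap\Omega}q^\eps_k\phi|\le\|q^\eps_k\|_{L^2(Q)}\|\phi\|_{L^2(Q\cap\Omega)}\le C(\eps\eta)^{(d-2)/2}\|\phi\|_{L^2(Q\cap\Omega)}$ is available; summed, this is $C\eta^{(d-2)/2}\eps^{-1/2}\|\phi\|_{L^2(\Omega)}$, which by itself is \emph{not} controlled by the right-hand side. I would absorb it by using that the test function vanishes on $\partial\Omega$ in the applications of the lemma (i.e. $\phi\in H^1_0(\Omega)$), so that a Hardy-type inequality gives $\|\phi\|_{L^2(\{\dist(\cdot,\partial\Omega)\le C\eps\})}\le C\eps\|\nabla\phi\|_{L^2(\Omega)}$, turning the boundary contribution into $C\eta^{(d-2)/2}\eps^{1/2}\|\nabla\phi\|_{L^2(\Omega)}\le C\eta^{(d-2)/2}\|\nabla\phi\|_{L^2(\Omega)}$ (and $|\log\eta|^{-1/2}$ in place of $\eta^{(d-2)/2}$ when $d=2$). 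Thus the argument hinges on Corollary~\ref{coro:chi} plus careful bookkeeping of the $\eps$- and $\eta$-powers across interior and boundary cells, and this boundary-layer estimate is where I expect to have to be most careful.
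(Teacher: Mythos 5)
Your argument is correct and follows essentially the same route as the paper: periodicity reduces both bounds to a single $\eps$-cell, where the mean/fluctuation splitting of $q^\eps_k$ (fed by Corollary \ref{coro:chi}), Poincar\'e--Wirtinger, and Cauchy--Schwarz over the $O(\eps^{-d})$ cubes give exactly the stated rates. The only point of divergence is the boundary cubes: the paper takes $\phi \in H^1_0(\Omega^\eps)$ and extends it by zero so that the same per-cube decomposition applies there as well, whereas you treat them separately via a crude bound plus the boundary Poincar\'e/Hardy inequality for functions vanishing on $\partial\Omega$ --- an equivalent fix, and your remark that a vanishing-trace hypothesis on $\phi$ is genuinely needed matches the assumption the paper makes (implicitly in the statement, explicitly in its proof).
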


\begin{proof} By the definition in \eqref{eq:vqeps}, $q^\eps_k$ is essentially a rescaling of $\omega^\eta_k$ in each $\eps$-cube $Q_{\eps,j} = \eps(j+Q_1\setminus \ol \eta T)$. Let $\mathcal{I}_\eps$ denote the indices $j$'s such that $Q_{\eps,j}$ has non-empty intersection with $\Omega$. To estimate $q^\eps_k$ in $L^2(\Omega)$, we break it into pieces on the cubes $\{Q_{\eps,j}\}$ for $j\in \mathcal{I}_\eps$, and only need to study $q^\eps_k$ on a typical $\eps$-cube, say $Q_\eps = Q_{\eps,0}$ that is centered at the origin. Let $\langle q^\eps_k\rangle$ be the average of $q^\eps_k$ in $Q_\eps$, then clearly 
\begin{equation*}
\langle q^\eps_k\rangle = \begin{cases} (\eps\eta)^{-1}\langle \omega^\eta_k\rangle_{\eta^{-1}\bT^d} \quad &\text{if } d\ge 3,\\
(\eps\eta|\log\eta|)^{-1}\langle \omega^\eta_k\rangle_{\eta^{-1}\bT^d} \quad &\text{if } d=2.
\end{cases}
\end{equation*}
By rescaling the results in \eqref{eq:omeavg} and \eqref{eq:chiavg}, we obtain
\begin{equation*}
\|q^\eps_k - \langle q^\eps_k\rangle \|_{L^2(Q_{\eps})} \le \begin{cases} C(\eps\eta)^{\frac{d-2}{2}} \quad &\text{if } d \ge 3,\\
C|\log \eta|^{-\frac12} \quad &\text{if } d=2,
\end{cases}
 \quad \text{and} \quad |\langle q^\eps_k\rangle| \le \begin{cases} C\eps^{-1}\eta^{d-1} \quad &\text{if } d\ge 3,\\
 C\eps^{-1}\eta|\log\eta|^{-1} \quad &\text{if } d=2.
 \end{cases}
\end{equation*}
In view of the second part of the inequalities above, we check that the first inequality still holds if $\langle q^\eps_k\rangle$ is removed. The estimate in \eqref{eq:qquant} then follows from the definition of $\sigma_\eps$ and from the fact that the cardinality of $\mathcal{I}_\eps$ is of order $O(\eps^{-d})$. 

\smallskip

To prove \eqref{eq:qquant1}, fix any $\phi \in H^1_0(\Omega^\eps)$. Extend $\phi$ by zero outside $\Omega$, so $\phi$ is defined in all $Q_{\eps,j}$ for $j \in \mathcal{I}_\eps$. We then compute
\begin{equation*}
\int_\Omega q^\eps_k \phi = \sum_{j\in \mathcal{I}_\eps} \int_{Q_{\eps,j}} q^\eps_k(x) \phi(x) dx.
\end{equation*}
For the integral over the cube $Q_{\eps,j}$, let $\langle \phi\rangle_j$ denote the mean value of $\phi$ over $Q_{\eps,j}$, then
\begin{equation*}
  \int_{Q_{\eps,j}} q^\eps_k(x) \phi(x) dx =  \int_{Q_{\eps,j}} (q^\eps_k(x)-\langle q^\eps_k\rangle) (\phi(x) -\langle \phi\rangle_j) dx + \left(\int_{Q_{\eps,j}}\phi(x) dx\right) \fint_{Q_{\eps,j}} q^\eps_k
\end{equation*} 
In view of the earlier estimates obtained for $q^\eps_k$, and using the Poincar\'e-Wirtinger inequality
\begin{equation*}
\|\phi - \langle \phi\rangle_j\|_{L^2(Q_{\eps,j})} \le C\eps \|\nabla \phi\|_{L^2(Q_{\eps,j})},
\end{equation*}
which holds uniformly for all $j$, we obtain, for $d\ge 3$,
\begin{equation*}
\begin{aligned}
\left|\int_\Omega q^\eps_k \phi\right| &\le \sum_{j\in \mathcal{I}_\eps} C(\eps\eta)^{\frac{d-2}{2}} \eps \|\nabla\phi\|_{L^2(Q_{\eps,j})} + C\eps^{-1}\eta^{d-1}\sum_{j\in\mathcal{I}_\eps}\left|\int_{Q_{\eps,j}}\phi\right|\\
&\le C\eta^{\frac{d-2}{2}} \|\nabla \phi\|_{L^2(\Omega)} + C\eps^{-1}\eta^{d-1}\|\phi\|_{L^1(\Omega)}\\
&\le C(\eta^{\frac{d-2}{2}}\|\nabla \phi\|_{L^2(\Omega)} + \sigma^{-1}_\eps \eta^{\frac d2}\|\phi\|_{L^2(\Omega)}).
\end{aligned}
\end{equation*}
For $d=2$, after some modifications in the intermediate steps, we get
\begin{equation*}
\left|\int_{\Omega} q^\eps_k \phi\right| \le C\left(|\log \eta|^{-\frac12}\|\nabla \phi\|_{L^2} + \sigma^{-1}_\eps \eta \|\phi\|_{L^2} \right).
\end{equation*}
The proof of the lemma is now complete.
\end{proof}

\begin{lemma}
  \label{lem:stressbound}
Under the assumptions of Theorem \ref{lem:cellqual}, we have
  \begin{equation}
	\left|\nabla v^\eps_k(x)\right| + |q^\eps_k(x)| \le C\eps^{-1}\kappa_\eta^2, \qquad x\in \eps \partial Q_1.
	\label{eq:vqstressbound}
  \end{equation}
\end{lemma}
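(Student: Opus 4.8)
The plan is to use the explicit representation \eqref{eq:etachi} of $(\chi^\eta_k,\omega^\eta_k)$ from Lemma~\ref{lem:chiomega}, exploiting the fact that after the change of variables $x\mapsto y:=x/(\eps\eta)$ the set $\eps\partial Q_1$ is mapped onto $\eta^{-1}\partial Q_1$, which sits far away from all the holes. \emph{Reduction.} By the definitions \eqref{eq:vqeps} and the identity $\kappa_\eta=\eps/\sigma_\eps$, for $x\in\eps\partial Q_1$ and $y=x/(\eps\eta)\in\eta^{-1}\partial Q_1$ one has $\nabla v^\eps_k(x)=(\eps\eta)^{-1}(\nabla\chi^\eta_k)(y)$ and $q^\eps_k(x)=(\eps\eta)^{-1}\omega^\eta_k(y)$ when $d\ge3$, with an additional factor $|\log\eta|^{-1}$ on the right-hand sides when $d=2$. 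Matching the powers of $\eps$, $\eta$ and of $|\log\eta|$, in every dimension $d\ge2$ it suffices to establish
\begin{equation*}
|\nabla\chi^\eta_k(y)|\le C\eta^{d-1},\qquad |\omega^\eta_k(y)|\le C\eta^{d-1},\qquad y\in\eta^{-1}\partial Q_1,
\end{equation*}
with $C$ depending only on $d$ and $T$.

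\emph{Geometry.} Since $\ol T\subset B_{3/8}$ while $\eta^{-1}\partial Q_1=\eta^{-1}\{w:\|w\|_\infty=1/2\}$, every $y\in\eta^{-1}\partial Q_1$ lies at distance at least $\tfrac1{2\eta}$ from the lattice $\eta^{-1}\Z^d$ on the torus, hence at distance at least $\tfrac1{8\eta}$ from the union of holes; equivalently, for $z\in\partial T$ the point $\eta(z-y)$ stays at distance at least $1/4$ from $0$ in $\bT^d$ provided $\eta$ is small. Thus $y$ is a regular point of all the periodic layer potentials, and the periodic Green functions evaluated at $\eta y$ and at $\eta(z-y)$, $z\in\partial T$, are bounded away from their singularities.

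\emph{Estimating the pieces.} Plugging \eqref{eq:etachi} in, the constant vectors $A_T e_k$ and $r^\eta_k$ drop out of $\nabla\chi^\eta_k$, so it remains to bound $\nabla G^\eta_k(y)$, $\nabla\cD^\eta_T[\tilde g](y)$, $P^\eta_k(y)$ and $\cP^\eta_T[\tilde g](y)$. For the fundamental-solution terms, $G^\eta_k(y)=\eta^{d-2}G_k(\eta y)$ and $P^\eta_k(y)=\eta^{d-1}P_k(\eta y)$ with $\eta y\in\partial Q_1$; writing $G_k=\Gamma_k+R_k$, $P_k=\theta_k+h_k$ with $R_k,h_k$ smooth on $\ol Q_1$ and $\Gamma_k,\theta_k$ smooth away from the origin, the quantities $\nabla G_k$ and $P_k$ are bounded on $\partial Q_1$ by a constant depending only on $d$, so that $|\nabla G^\eta_k(y)|+|P^\eta_k(y)|\le C\eta^{d-1}$. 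For the layer-potential terms, differentiating under the integral sign and using again $G^\eta_k(\cdot)=\eta^{d-2}G_k(\eta\,\cdot)$ and $P^\eta_k(\cdot)=\eta^{d-1}P_k(\eta\,\cdot)$, together with the distance bound of the previous step (so that all derivatives of $G_k$ and $P_k$ evaluated at $\eta(z-y)$, $z\in\partial T$, are $O(1)$) and the bound $\|\tilde g\|_{L^1(\partial T)}\le C\|\tilde g\|_{L^2(\partial T)}\le C$ from \eqref{eq:gbdd}, we obtain $|\nabla\cD^\eta_T[\tilde g](y)|+|\cP^\eta_T[\tilde g](y)|\le C\eta^{d}\le C\eta^{d-1}$. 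Adding the contributions yields the two displayed inequalities, and undoing the rescaling of the reduction step gives \eqref{eq:vqstressbound}.

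\emph{Main difficulty.} There is no serious obstacle here; the care is entirely in the bookkeeping of the two nested scalings (the rescaling by $\eta$ that defines $\chi^\eta_k,\omega^\eta_k$, then the rescaling by $\eps\eta$ that defines $v^\eps_k,q^\eps_k$) and of the dimension-dependent normalizations at $d=2$, together with making the ``$y$ is far from the holes and from the singularity of the periodic Green function'' argument quantitative enough that all constants stay uniform in $\eta$. One could alternatively try interior Lipschitz and pressure estimates for the Stokes system on a ball $B_{c\eta^{-1}}(y)$ combined with the energy bound \eqref{eq:chiunif}; however the $L^2$ average of $\chi^\eta_k$ over such a ball is not controlled sharply enough by \eqref{eq:chiunif} alone, and it is precisely \eqref{eq:etachi} that supplies the extra decay near $\eta^{-1}\partial Q_1$, so we follow the route above.
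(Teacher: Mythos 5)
Your proposal is correct and follows essentially the same route as the paper's proof: rescale to $y\in\eta^{-1}\partial Q_1$, insert the representation formula \eqref{eq:etachi} (equivalently, the perturbative form \eqref{eq:Gperturb}), and use that such $y$ is at distance of order $\eta^{-1}$ from the singularities of the fundamental solutions and of the layer-potential kernels, together with the bound \eqref{eq:gbdd} on $\tilde g$. Your bookkeeping of the scalings (including the $d=2$ logarithmic normalization) and the resulting bounds $|\nabla\chi^\eta_k|,|\omega^\eta_k|\le C\eta^{d-1}$ on $\eta^{-1}\partial Q_1$ match what the paper's computation yields.
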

\begin{proof} The result follows from the explicit formulas of $v^\eps_k$ and $p^\eps_k$. We only provide the details for $d\ge 3$; the case for $d=2$ is similar. By rescaling, by the formula \eqref{eq:chiform} of $(\chi^\eta_k,\omega^\eta_k)$ and the perturbative formula \eqref{eq:Gperturb}, we have
  \begin{equation*}
	\begin{aligned}
	  &\nabla v^\eps_k(x) = \frac{1}{\eps \eta} (\nabla \chi^\eta_k)(y) = \frac{1}{\eps\eta}\left[\nabla\Gamma_k(y) + \eta^{d-1}(\nabla R_k)(\eta y) + (\nabla \cD^\eta_T[\tilde g])(y)\right], \\ 
	  &q^\eps_k(x) = \frac{1}{\eps\eta} \omega^\eta_k(y) = \frac{1}{\eps\eta} \left[\theta_k(y) + \eta^{d-1}h_k(\eta y) + \cP^\eta_T[\tilde g](y)\right].
  \end{aligned}
  \end{equation*}
  where $y = \frac{x}{\eps\eta}$ is the rescaled variable and belongs to $\eta^{-1}\bT^d\setminus \ol T$. We are interested in the case of $x\in \eps\partial Q_1$, i.e. $y\in \eta^{-1}\partial Q_1$. Such a point $y$ is far away from the singularities of the fundamental solutions $(\Gamma_k,\theta_k)$ and from the singularities of the integral operators $\cD^\eta_T$ and $\cP^\eta_T$. Hence, all terms on the right hand sides can be computed rather easily, and the desired estimate follows from explicit computations and from the observation that $|y| = O(\eta^{-1})$, $d(y,\partial T) = O(\eta^{-1})$ and from the estimate \eqref{eq:gbdd} for $\tilde g$. 
\end{proof}


\section{Homogenization errors in the dilute Darcy's regime}
\label{sec:app:sup}

Our goal here is to prove Theorem \ref{thm:homerr}. We establish the convergence rate for the homo\-genization of \eqref{eq:hetstokes} in the dilute super-critical case, namely \eqref{eq:err_supc}, in the general setting, i.e. without assuming the homogenized solution satisfies $u\rvert_{\partial \Omega} = 0$.

\subsection{Equations for the cell functions}

In the construction of $\Omega^\eps$, we removed (filled in) some holes of $\eps \R^d_{\rm f}$ near the boundary $\partial \Omega$; see \eqref{eq:Oepsdef}. As a consequence, there is a mismatch between the sets $\Omega^\eps$ and $\Omega\cap (\eps \R^d_{\rm f})$. To derive convergence rates for homogenization, we need to use the equations for the cell functions $(v^\eps_k,q^\eps_k)$, $k=1,\dots,d$, in the modified domain $\Omega^\eps$. Direct computations using integration by parts show that the first line of \eqref{eq:rscell} should be changed to (with a slight abuse, we use the same notation for the solutions of \eqref{eq:vqbdhole} and \eqref{eq:rscell})
\begin{equation}
  \label{eq:vqbdhole}
  -\Delta v^\eps_k + \nabla q^\eps_k = \frac{e_k}{\sigma_\eps^2} + s_{\eps,k} \qquad \text{in } \; \Omega^\eps,
\end{equation}
where $s_{\eps,k} \in H^{-1}(\Omega^\eps)$ is defined by
\begin{equation}
  \langle s_{\eps,k},\varphi\rangle_{H^{-1},H^1_0} := -\sum_{z\in \mathcal{I}'_\eps} \left[\int_{\Omega \cap \eps(z+\eta T)} \frac{e_k}{\sigma_\eps^2} \cdot \varphi + \int_{\ol \Omega \cap \eps(z + \partial (\eta \ol T))} \frac{\partial (v^\eps_k,p^\eps_k)}{\partial \nu} \cdot \varphi\right], \quad \varphi \in H^1_0(\Omega^\eps). 
	\label{eq:sepsk}
\end{equation}
Here, the sum over $z$ is taken over $\mathcal{I}'_\eps:=\{z \in \Z^d \,:\, \eps(z+Q_1) \cap \partial \Omega \ne \varnothing\}$. In other words, they correspond to the removed (filled) holes near $\partial \Omega$. 
Compare the equations \eqref{eq:vqbdhole} in $\Omega^\eps$ and \eqref{eq:rscell} in $\eps\R^d_{\rm f}$, we see the latter have an extra term $s_{\eps,k}$ which accounts for the mismatch between the two domains. We will see later that this term causes some difficulty in the convergence rates analysis, more so in the dilute setting than in the classical setting \cite{MR4432951}. 

For each $t>0$ small, let $\Omega_{(t)} := \{x\in \Omega \,:\, d(x,\partial \Omega) \le t\}$ defines a $t$-neighborhood of $\partial \Omega$. Recall that $K_\eps$ is the  neighborhood of $\partial \Omega$ in which holes were filled.

\begin{lemma}
  \label{lem:sepsk-2}
Assume that $\varphi \in H^1_0(\Omega)$ satisfies $\varphi = 0$ in $\Omega\setminus \Omega^\eps$. Then there exists a universal constant $C>0$ such that  
\begin{equation}
\left|\langle s_{\eps,k},\varphi\rangle\right| \le C\sqrt{\eps}\sigma_\eps^{-1}\|\nabla \varphi\|_{L^2(\Omega_{(2\ep)})}.
  \label{eq:sepskbdd-2}
\end{equation}
\end{lemma}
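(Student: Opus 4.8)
The plan is to estimate directly the two contributions in the definition \eqref{eq:sepsk} of $s_{\eps,k}$, summing over the boundary index set $\mathcal I'_\eps$, and exploiting that $\varphi$ vanishes on the filled holes $\eps(z+\eta\ol T)$ for $z\in\mathcal I'_\eps$. Since $\varphi=0$ on $\Omega\setminus\Omega^\eps$ and in particular on each hole $\eps(z+\eta \ol T)$, $z\in\mathcal I'_\eps$, the \emph{first} term $\int_{\Omega\cap\eps(z+\eta T)}\tfrac{e_k}{\sigma_\eps^2}\cdot\varphi$ simply vanishes. So only the boundary-stress term survives, and the task reduces to controlling
\[
\Big|\sum_{z\in\mathcal I'_\eps}\int_{\ol\Omega\cap\eps(z+\partial(\eta\ol T))}\frac{\partial(v^\eps_k,q^\eps_k)}{\partial\nu}\cdot\varphi\Big|.
\]
The natural idea is to rewrite each surface integral over $\eps(z+\partial(\eta\ol T))$ via Green's identity as an integral over the fluid part of the cube $\eps(z+Q_1)$, using that $(v^\eps_k,q^\eps_k)$ solves the homogeneous-type Stokes system \eqref{eq:rscell} there with right-hand side $\sigma_\eps^{-2}e_k$. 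Concretely, for each such $z$, integrating by parts on $\eps(z+Q_1)\cap\eps\R^d_{\rm f}$ against $\varphi$ and using $v^\eps_k=0$ on the hole boundary gives a bulk term $\int \nabla v^\eps_k:\nabla\varphi + \int q^\eps_k\,\nabla\cdot\varphi$ plus the contribution $\sigma_\eps^{-2}\int e_k\cdot\varphi$ over the cube, plus a term on $\eps(z+\partial Q_1)$. Here one must be a little careful: the cubes $\eps(z+Q_1)$, $z\in\mathcal I'_\eps$, are the boundary cubes meeting $\partial\Omega$, and they are not necessarily contained in $\Omega$; but since $\mathcal I'_\eps$-cubes lie within distance $O(\eps)$ of $\partial\Omega$, their union is contained in a neighborhood $\Omega_{(2\eps)}$ for $\eps$ small, which is exactly where the final bound localizes $\|\nabla\varphi\|$. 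The inter-cube boundary contributions on $\eps(z+\partial Q_1)$ for adjacent boundary cubes cancel in pairs (the traces of $v^\eps_k$ and $q^\eps_k$ match across shared faces by periodicity), so the only uncancelled face terms are on the outer layer, i.e. faces touching $K_\eps$; on those faces $\varphi=0$ need not hold, but they sit in $\Omega\setminus\Omega^\eps$ where... — actually the cleanest route is to note that by the construction of $\Omega^\eps$, $\varphi$ is supported in $\Omega^\eps$, and to apply the single global integration by parts on $\Omega^\eps$ rather than cube-by-cube, which kills all interior face terms automatically.

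Carrying this out globally: test \eqref{eq:vqbdhole} against $\varphi\in H^1_0(\Omega^\eps)$, so that
\[
\langle s_{\eps,k},\varphi\rangle = \int_{\Omega^\eps}\nabla v^\eps_k:\nabla\varphi + \int_{\Omega^\eps} q^\eps_k\,\nabla\cdot\varphi - \sigma_\eps^{-2}\int_{\Omega^\eps} e_k\cdot\varphi,
\]
and all three integrals are supported in $\mathrm{supp}\,\varphi$. The key structural point is that, away from the filled boundary layer, $(v^\eps_k,q^\eps_k)$ genuinely solves \eqref{eq:rscell}, so the analogous identity tested against $\varphi$ over $\eps\R^d_{\rm f}\cap\Omega$ vanishes; subtracting, $\langle s_{\eps,k},\varphi\rangle$ is an integral of $\nabla v^\eps_k:\nabla\varphi + q^\eps_k\nabla\cdot\varphi - \sigma_\eps^{-2}e_k\cdot\varphi$ over the region where the two domains differ, which is contained in $K_\eps\subset\Omega_{(2\eps)}$. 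Then Cauchy--Schwarz gives
\[
|\langle s_{\eps,k},\varphi\rangle|\le \big(\|\nabla v^\eps_k\|_{L^2(\Omega_{(2\eps)})} + \|q^\eps_k\|_{L^2(\Omega_{(2\eps)})} + \sigma_\eps^{-2}|\Omega_{(2\eps)}|^{1/2}\big)\,\|\varphi\|_{H^1(\Omega_{(2\eps)})}.
\]
Now I invoke the already-established cell estimates: $\|\nabla v^\eps_k\|_{L^2(\Omega)}\le C\sigma_\eps^{-1}$ by \eqref{eq:vbdd} and $\|q^\eps_k\|_{L^2(\Omega)}\le C\sigma_\eps^{-1}$ by \eqref{eq:qquant}, but these are global bounds, whereas I need a gain of $\sqrt\eps$ coming from the \emph{thinness} $|\Omega_{(2\eps)}|=O(\eps)$ of the boundary layer. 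The right tool is a per-cube estimate: over each $\eps$-cube, $\|\nabla v^\eps_k\|_{L^2}^2$ and $\|q^\eps_k\|_{L^2}^2$ are each $O(\eps^{d-2}\kappa_\eta^2)=O(\eps^d\sigma_\eps^{-2})$ (this is exactly the rescaling computation from Step 1 of the proof of Theorem \ref{lem:cellqual} and from Lemma \ref{lem:vq}), and $\Omega_{(2\eps)}$ contains only $O(\eps^{-(d-1)})$ cubes, so $\|\nabla v^\eps_k\|_{L^2(\Omega_{(2\eps)})}^2 + \|q^\eps_k\|_{L^2(\Omega_{(2\eps)})}^2 \le C\eps\,\sigma_\eps^{-2}$, i.e. $\le C\sqrt\eps\,\sigma_\eps^{-1}$ after square root. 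Likewise $\sigma_\eps^{-2}|\Omega_{(2\eps)}|^{1/2}\le C\sqrt\eps\,\sigma_\eps^{-2}\le C\sqrt\eps\,\sigma_\eps^{-1}$ since $\sigma_\eps\le C$ in the regimes where this is applied (or one absorbs it by Poincaré against $\|\nabla\varphi\|$). Finally, since $\varphi\in H^1_0(\Omega)$ and $\Omega_{(2\eps)}$ is a thin boundary collar, the Poincaré inequality in the normal direction gives $\|\varphi\|_{L^2(\Omega_{(2\eps)})}\le C\eps\|\nabla\varphi\|_{L^2(\Omega_{(2\eps)})}$, so $\|\varphi\|_{H^1(\Omega_{(2\eps)})}\le C\|\nabla\varphi\|_{L^2(\Omega_{(2\eps)})}$; combining yields \eqref{eq:sepskbdd-2}.

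The main obstacle I anticipate is making the reduction "$\langle s_{\eps,k},\varphi\rangle$ localizes to $\Omega_{(2\eps)}$" fully rigorous, i.e. justifying that the difference of the two Stokes identities (on $\Omega^\eps$ versus on $\eps\R^d_{\rm f}\cap\Omega$) really is supported in $K_\eps$ and involves no leftover boundary terms on $\partial\Omega$ or on $\partial K_\eps$ — this requires keeping careful track of which holes are filled, that $\varphi$ vanishes on all of them and has zero trace on $\partial\Omega$, and that the inner faces of the buffer region $K_\eps$ are handled by the global (rather than cube-by-cube) integration by parts so that no spurious face integrals appear. The per-cube $O(\eps^{d-2}\kappa_\eta^2)$ estimate for $\nabla v^\eps_k$ and $q^\eps_k$ is routine given the rescaling formulas \eqref{eq:vqeps} and the bounds \eqref{eq:chiunif}, \eqref{eq:omeavg}; the Poincaré-in-a-collar inequality is standard for a $C^2$ domain. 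So the analytic content is light and the bookkeeping of the domain mismatch is where care is needed.
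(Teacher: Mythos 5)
Your proof has a genuine gap at the decisive reduction step. The set where $\Omega^\eps$ and $\Omega\cap\eps\R^d_{\rm f}$ differ is the union of the \emph{filled} holes $\Omega\cap\eps(z+\eta T)$, $z\in \mathcal{I}'_\eps$, and these belong to the fluid domain $\Omega^\eps$: the hypothesis $\varphi=0$ in $\Omega\setminus\Omega^\eps$ says nothing about $\varphi$ there, and the surfaces $\eps(z+\partial(\eta\ol T))$, $z\in\mathcal{I}'_\eps$, are \emph{interior} surfaces of $\Omega^\eps$ on which $\varphi$ has nonzero trace. Hence your opening claim that the volume term of \eqref{eq:sepsk} vanishes is false, and, more seriously, the "key structural point" that the weak form of \eqref{eq:rscell} tested against $\varphi$ over $\Omega\cap\eps\R^d_{\rm f}$ vanishes is also false: integrating by parts there leaves exactly the stress integrals $\int_{\ol\Omega\cap\eps(z+\partial(\eta\ol T))}\frac{\partial(v^\eps_k,q^\eps_k)}{\partial\nu}\cdot\varphi$, i.e.\ the main part of $s_{\eps,k}$, because $\varphi\rvert_{\Omega\cap\eps\R^d_{\rm f}}$ is not in $H^1_0$ of that set. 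Had your subtraction identity been valid, then since $\nabla v^\eps_k$ and $q^\eps_k$ vanish identically inside the holes (zero extension), it would yield $\langle s_{\eps,k},\varphi\rangle=-\sigma_\eps^{-2}\sum_{z\in\mathcal{I}'_\eps}\int_{\Omega\cap\eps(z+\eta T)}e_k\cdot\varphi$, which is not what \eqref{eq:sepsk} says; so the localized expression to which you then apply Cauchy--Schwarz is never established.

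The correct localization (this is what the paper does) is to integrate \eqref{eq:rscell} against $\varphi$ over $K_\eps\cap\eps\R^d_{\rm f}$ only. This converts the filled-hole stress sum into the bulk terms $\int_{K_\eps}(\nabla v^\eps_k-q^\eps_k\mathbf{I}):\nabla\varphi$ and $\sigma_\eps^{-2}\int_{K_\eps}e_k\cdot\varphi$ --- which your per-cube counting ($O(\eps^{-(d-1)})$ cubes, $O(\eps^{d-2}\kappa_\eta^2)$ per cube) and the collar Poincar\'e inequality do handle, reproducing the dominant bound $C\sqrt{\eps}\,\sigma_\eps^{-1}\|\nabla\varphi\|_{L^2(\Omega_{(2\eps)})}$ --- \emph{plus an interface term} $\int_{\partial K_\eps\cap\Omega}N\cdot(\nabla v^\eps_k-q^\eps_k\mathbf{I})\cdot\varphi$ on the cell faces separating the buffer region from the interior cells. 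This is exactly the term you gestured at in your cube-by-cube sketch ("faces touching $K_\eps$") and then dismissed with the incorrect remark that they sit in $\Omega\setminus\Omega^\eps$; they sit inside the fluid region, $\varphi$ does not vanish there, and the term does not cancel. Controlling it requires an ingredient absent from your proposal: the pointwise stress bound $|\nabla v^\eps_k|+|q^\eps_k|\le C\eps^{-1}\kappa_\eta^2$ on $\eps\partial Q_1$ from Lemma \ref{lem:stressbound}, combined with a trace inequality on the faces, which gives a contribution bounded by $C\sqrt{\eps}\,\sigma_\eps^{-1}\kappa_\eta^2\|\nabla\varphi\|_{L^2(\Omega_{(2\eps)})}$ and hence is admissible in \eqref{eq:sepskbdd-2}. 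So while your collar estimates for $\nabla v^\eps_k$, $q^\eps_k$ and the Poincar\'e step are sound, the identity they are applied to is not justified, and the missing interface estimate is an essential part of the proof, not bookkeeping.
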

\begin{proof}
  Extend $\varphi$ by zero in $\R^d\setminus \Omega$ so $\varphi \in H^1_0(\R^d)$. Multiply the equation \eqref{eq:rscell} and take the integral over the domain $\eps\R^d_{\rm f}\setminus(\Omega\setminus K_\eps)$. Via integration by parts we get
\begin{equation*}
  -\int_{\partial K_\eps \cap \Omega} \frac{\partial(v^\eps_k,q^\eps_k)}{\partial \nu} \cdot \varphi + \sum_{z\in \mathcal{I}'_\eps} \int_{\eps(z+\eta\partial T)} \frac{\partial(v^\eps_k,q^\eps_k)}{\partial \nu} \cdot \varphi = \int_{K_\eps\cap \eps\R^d_{\rm f}} \sigma_\eps^{-2}e_k\cdot \varphi - (\nabla v^\eps_k - q^\eps_k \mathbf{I}):\nabla \varphi\,dx.   
\end{equation*}
Plugging this into the definition \eqref{eq:sepsk}, we obtain
\begin{align}
\begin{split}
  \langle s_{\eps,k},\varphi\rangle =\ & -\sigma^{-2}_\eps \int_{K_\eps} e_k\cdot \varphi + \int_{K_\eps} (\nabla v^\eps_k -q^\eps_k \mathbf{I}):\nabla \varphi + \int_{\partial K_\eps \cap \Omega} N_y\cdot (\nabla v^\eps_k - q^\eps \mathbf{I})\cdot \varphi\,dy\\
   =\ &I_1+I_2+I_3.
   \end{split}
  \label{eq:sepsk-2}
\end{align}

For $I_1$, observe that $K_\eps$ is a subset of $\Omega_{(2\eps)} := \{x\in \Omega\,:\, d(x,\partial \Omega) \le 2\eps\}$. Using Poincar\'e's inequality $\|\varphi\|_{L^2(\Omega_{(2\eps)})}\le C\eps\|\nabla\varphi\|_{L^2(\Omega_{(2\eps)})}$, we have 
\begin{equation*}
  |I_1| \le \sigma_\eps^{-2}\|e_k\|_{L^2(K_\eps)} \|\varphi\|_{L^2(K_\eps)} \le C\eps^\frac32\sigma_\eps^{-2} \|\nabla \varphi\|_{L^2(\Omega_{(2\eps)})}.
\end{equation*}

For $I_2$, we break the integral over $K_\eps$ to the sum of integrals over $\eps$-cubes $\{\eps(z+Q_1)\}$, $z\in \mathcal{I}'_\eps$. Note that the square of the $L^2$ norms of $\nabla v^\eps_k$ and $q^\eps_k$ in each $\eps$-cube is of order $O(\eps^{d-2}\kappa_\eta^2)$ (see Theorem \ref{lem:cellqual} and Lemma \ref{lem:vq}), and that the number of $\eps$-cubes included in $K_\eps$ is of order $O(\eps^{-d+1})$. We deduce 
\begin{equation*}
  |I_2| \le \|\nabla v^\eps_k -q^\eps_k\|_{L^2(K_\eps)} \|\nabla \varphi\|_{L^2(K_\eps)} \le C(\eps^{-1}\kappa_\eta^2)^{\frac12}  \|\nabla \varphi\|_{L^2(K_\eps)} \leq C\eps^{\frac12}\sigma_\eps^{-1}\|\nabla \varphi\|_{L^2(\Omega_{(2\ep)})}.
\end{equation*}

For $I_3$, we use the $L^\infty$ estimate \eqref{eq:vqstressbound} of $\nabla v^\eps_k - q^\eps_k\mathbf{I}$ on $\eps\partial (z+Q_1)$, $z\in \Z^d$, and conclude that
\begin{equation*}
|I_3| \le C\sigma_\eps^{-1}\kappa_\eta^2\int_{\partial K_\eps \cap \Omega} |\varphi(x)|\,dx 
\end{equation*}
Note that $\partial K_\eps \cap \Omega$ consists of sides of $\eps$-cubes $\eps(z+Q_1)$ with $z\in \mathcal{I}'_\eps$. In view of the elementary estimate
\begin{equation*}
  \int_{\eps(z+\partial Q_1)} |\varphi|^2 \le C\eps\int_{\eps(z+Q_1)} |\nabla\varphi|^2 + C\eps^{-1}\int_{\eps(z+Q_1)} |\varphi|^2.
\end{equation*}
We obtain that 
\begin{equation*}
  |I_3| \le C\sigma_\eps^{-1}\kappa_\eta^2 \left(\sqrt{\eps}\|\nabla \varphi\|_{L^2(K_\eps)} + \eps^{-\frac12} \|\varphi\|_{L^2(K_\eps)} \right) \le C\sqrt{\eps}\sigma_\eps^{-1}\kappa_\eta^2 \|\nabla \varphi\|_{L^2(\Omega_{(2\ep)})}.
\end{equation*}
Note that $\eps^\frac32\sigma^{-2}_\eps = \sqrt{\ep}\kappa_\eta \sigma^{-1}_\eps$, and so the bound for $I_2$ dominates the others. We conclude that \eqref{eq:sepskbdd-2} holds.
\end{proof} 


\subsection{Discrepancy functions}

As usual, the quantification of convergence rates amounts to choosing correctors $r_\eps$ and $t_\eps$ for the velocity field and, respectively, the pressure field, which are small and which make the corrected discrepancy functions 
$u^\eps/(\sigma^2_\eps\wedge 1) - u - r_\eps$ and $p^\eps-p - t_\eps$ small. Our method is based on the classical two-scale expansion method, and this expansion suggests natural candidates for correctors, at least in the classical setting. Again, by formally treating $\eta$ as a parameter, the two-scale expansion inspires us to consider the following discrepancy functions:
\begin{equation}
  \label{eq:zetatau-super}
  \zeta^\eps := \frac{u^\eps(x)}{\sigma^2_\eps} - v^\eps_k(x)[f^k - \partial_k p](x),  \qquad 
  \tau^\eps :=p^\eps(x) - p(x) - \sigma^2_\eps q^\eps_k(x)[f^k-\partial_k p](x).
\end{equation}
Here, $(v^\eps_k,q^\eps_k)$ are the rescaled cell functions solution to \eqref{eq:rscell}. 
It turns out that, the above choice works perfectly in the (dilute) super-critical setting. This is expected because, in the classical setting where $\sigma_\eps =1$, the terms subtracted are precisely the leading ones in the two-scale expansion. In the critical and sub-critical settings, we will slightly modify the above to come up with proper discrepancy functions, see Appendix \ref{sec.app.quant}.

We can rewrite $f-\nabla p$ as $Mu$ where $u$ is the homogenized solution; see \eqref{eq:hpdesupc}. We consider the discrepancy functions $\zeta^\eps$ and $\tau^\eps$ defined in \eqref{eq:zetatau-super}, hence choosing the correctors
\begin{equation}
  \label{eq:corrector_supc}
  r_\eps = (v^\eps_k-M^{-1}e_k) (Mu)^k,\qquad
  t_\eps = \sigma^2_\eps q^\eps_k(Mu)^k.
\end{equation}
The smallness of $\|r_\eps\|_{L^2}$ and $\|t_\eps\|_{L^2}$ is immediately seen from \eqref{eq:vquant} and \eqref{eq:qquant}. 
To estimate the discrepancy functions $(\zeta^\eps,\tau^\eps)$, we check by direct computation (using \eqref{eq:hetstokes} and \eqref{eq:vqbdhole}) that they satisfy the following equations in the perforated domain $\Omega^\eps$:
\begin{equation}
  \label{eq:zteq_supc}
\left\{
\begin{aligned}
  &-\sigma_\eps^2 \Delta \zeta^\eps + \nabla \tau^\eps = \sigma_\eps^2 \nabla \cdot [v^\eps_k\nabla (Mu)^k] + \sigma_\eps^2 (\nabla v^\eps_k) \cdot \nabla (Mu)^k-\sigma_\eps^2 q^\eps_k \nabla (Mu)^k + \sigma^2_\eps \check{s},\\
&\nabla \cdot \zeta^\eps = -(v^\eps_k - M^{-1}e_k)\cdot \nabla (Mu)^k.
\end{aligned}
\right.
\end{equation}
Here, $\check{s} \in H^{-1}(\Omega)$ is defined by
\begin{equation}
  \label{eq:checksdef}
  \langle \check{s},\psi\rangle_{H^{-1},H^1_0} := \sum_{k=1}^d \langle -s_{\eps,k},(Mu)^k\psi\rangle_{H^{-1},H^1_0} \qquad \text{for} \; \psi \in H^1_0(\Omega).
\end{equation}

The equations above form a standard Dirichlet boundary value problem once augmented with boundary data at $\partial \Omega^\eps$. Since $u^\eps$ and the $v^\eps_k$'s all vanish at the boundaries of the holes, we get
\begin{equation}
  \label{eq:zetabdv}
  \zeta^\eps(x)  = -v^\eps_k(x)(Mu)^k(x)\mathbf{1}_{\partial \Omega}(x), \qquad x \in \partial \Omega^\eps.
\end{equation}
In the super-critical setting, the homogenized solution $u$ in \eqref{eq:hpdesupc} does not need to vanish on  $\partial \Omega$, and, hence, $\zeta^\eps \not\in H^1_0(\Omega^\eps)$ in general. This fact makes the quantification of convergence rates quite a difficult task for the super-critical setting, as is already the case in the classical setting \cite{MR4432951}.

\subsection{Proof of Theorem \ref{thm:homerr}: the cut-off function approach}
\label{subsec.dilute.quant}

In Appendix \ref{sec.app.qual} we give a unified treatment of the qualitative convergence. In Appendix \ref{sec.app.quant} we give a unified treatment of the qualitative convergence, further imposing that the homogenized solution $u$ satisfies $u=0$ on $\partial\Omega$ in the super-critical setting. We focus here on the most difficult case, namely the dilute super-critical case without assuming $u\rvert_{\partial \Omega} = 0$.

The system \eqref{eq:zteq_supc} is a special case of the non-homogeneous Stokes system \eqref{eq:vpsystem} with $\sigma = \sigma_\eps$, and 
\begin{equation}
  \label{eq:data_supc}
  \begin{aligned}
	&b = \sigma^2_\eps (\nabla v^\eps_k - q^\eps_k \mathbf{I})\cdot \nabla (Mu)^k, \quad && F = \sigma_\eps v^\eps_k \nabla (Mu)^k,\\
	&g = -(v^\eps_k-M^{-1}e_k) \cdot \nabla (Mu)^k, \quad && s = \sigma_\eps \check{s}, \qquad h = -v^\eps_k (Mu)^k.
  \end{aligned}
\end{equation}
Note that $g$ and $h$ above do satisfy the compatibility condition \eqref{eq:ghcompatible}. We can apply the basic energy estimate of Stokes system (see Proposition \ref{prop:zteq_supc}) to the pair $(\zeta^\eps,\tau^\eps)$. 

To this end, we use the so-called radial cut-off function of Wang, Xu and Zhang \cite{WXZ22}, to construct a proper vector field $H \in H^1(\Omega)$ to plug into \eqref{eq:benergy}. For $t>0$ sufficiently small, the authors of \cite{WXZ22} constructed a cut-off function $\phi_t$ satisfying $\phi_t = 0$ in $\Omega\setminus \Omega_{(3t)}$ and $\phi_t=1$ in $\Omega_{(t)}$ and the support of $\nabla \phi_t$ is contained in $\Omega_{(3t)}\setminus \Omega_{(2t)}$ and $|\nabla \phi_t| \le Ct^{-1}$. Moreover, in the support of $\nabla \phi_t$, $\nabla \phi_t(x)$ is proportional to $N_{x'}$, where $x' = Px$ is the orthogonal projection of $x$ to the boundary $\partial \Omega$.

For $t>0$ small to be chosen, let
\begin{equation}
  H(x) = \phi_t(x)v^\eps_k(x)(Mu)^k(x), \qquad x\in \Omega.
  \label{eq:altHdef}
\end{equation}
Then $H$ satisfies $H+\zeta^\eps \in H^1_0(\Omega)$ (due to $\phi_t = 1$ on $\partial \Omega$) and $H+\zeta^\eps = 0$ in the holes of $\Omega^\eps$ (due to the presence of $v^\eps_k$ in both functions).
By computation we check
\begin{equation*}
  \begin{aligned}
	\nabla H &= (\nabla \phi_t)v^\eps_k(Mu)^k + \phi_t(\nabla v^\eps_k)(Mu)^k + \phi_t v^\eps_k \nabla(Mu)^k,\\
	\nabla\cdot H &= (v^\eps_k-M^{-1}e_k)\cdot (\nabla \phi_t) (Mu)^k + \phi_t(v^\eps_k-M^{-1}e_k)\cdot \nabla (Mu)^k + (\nabla \phi_t)\cdot u. 
  \end{aligned}
\end{equation*}

We first estimate $\|\nabla H\|_{L^2(\Omega)}$. Since $\phi_t$ and $\nabla \phi_t$ are supported in $\Omega_{(3t)}$ and $\Omega_{(3t)}\setminus\Omega_{(2t)}$, respectively, we use H\"older's inequality and the bound $|\nabla \phi_t| \le Ct^{-1}$ and deduce
\begin{equation*}
  \begin{aligned}
  \|\nabla H\|_{L^2(\Omega)} &\le \sum_{k=1}^d C\left[t^{-1}\|v^\eps_k\|_{L^2(\Omega_{(3t)})}  + \|\nabla v^\eps_k\|_{L^2(\Omega_{(3t)})} \right](
  \|(Mu)^k\|_{L^\infty(\Omega_{(3t)})} + \|\nabla(Mu)^k\|_{L^\infty(\Omega_{(3t)})}) \\
  &\le \sum_{k=1}^d C\left[t^{-1}\|v^\eps_k\|_{L^2(\Omega_{(3t)})}  + \|\nabla v^\eps_k\|_{L^2(\Omega_{(3t)})} \right]\|(Mu)^k\|_{W^{1,\infty}(\Omega)}.
  \end{aligned}
\end{equation*}
For the $L^2(\Omega_{(3t)})$ estimates of $v^\eps_k$ and of $\nabla v^\eps_k$, we break the integration region $\Omega_{(3t)}$ into $\eps$-cubes. The number of such cubes is of order $O(t\eps^{-d})$. Inside each $\eps$-cube, the square of the $L^2$ norm of $\nabla v^\eps_k$ is of order $O(\eps^{d-2}\kappa_\eta^2)$, and that of $v^\eps_k$ is of order $O(\eps^d)$. We conclude that
\begin{equation*}
  \begin{aligned}
	\sigma_\eps\|\nabla H\|_{L^2(\Omega)} &\le C\sigma_\eps \left[t^{-1}(\eps^d t\eps^{-d})^{\frac12} + (\eps^{d-2}\kappa_\eta^2 t\eps^{-d})^{\frac12} \right]\|u\|_{W^{1,\infty}(\Omega)} \\
	&=C(\sigma_\eps t^{-\frac12} + t^{\frac12})\|u\|_{W^{1,\infty}(\Omega)}.
\end{aligned}
\end{equation*}

Now we estimate $\|\nabla\cdot H\|_{L^2(\Omega)}$. By the same reasoning as above, and choosing $p=2d/(d-2), q=d$ for $d\ge 3$, and $p=2, q=\infty$ for $d=2$, we get
\begin{equation}
\label{eq:divHbdd}
\begin{aligned}
\|\nabla\cdot H\|_{L^2} \le &\sum_{k=1}^d C\|v^\eps_k-M^{-1}e_k\|_{L^p(\Omega_{(3t)})} \|1\|_{L^q(\Omega_{(3t)})} \left[ t^{-1}  \|(Mu)^k\|_{L^2(\Omega_{(3t)})} + \|\nabla(Mu)^k\|_{L^2(\Omega_{(3t)})}\right]\\
&\qquad + \|\nabla \phi_t \cdot u\|_{L^2(\Omega_{(3t)})}\\
\le &C t^{\frac1q}\big(t^{-\frac12}\|u\|_{H^1} + t^{\frac12}\|u\|_{H^2}\big) \max_k \|v^\eps_k-M^{-1}e_k\|_{L^p(\Omega_{(3t)})} + \|\nabla \phi_t \cdot u\|_{L^2(\Omega_{(3t)})}.
\end{aligned}
\end{equation}
By \eqref{e.veps-M1}-\eqref{e.veps-M3}, in each $\eps$-cube, $\|v^\eps_k - M^{-1}e_k\|^p_{L^p}$ is of order $O(\eps^d \eta^d)$ for $d\ge 3$ and is of order $O(\eps^2 \kappa_\eta^2)$ for $d=2$. We conclude that
\begin{equation*}
\max_k \|v^\eps_k - M^{-1}e_k\|_{L^p(\Omega_{(3t)})} \le Ct^{\frac{1}{p}} \kappa_\eta, \qquad d\ge 2.
\end{equation*}
The first part in the last line of \eqref{eq:divHbdd} is bounded by $C(\kappa_\eta\|u\|_{H^1} + t\kappa_\eta\|u\|_{H^2})$.

Finally, we control $\|\nabla \phi_t\cdot u\|_{L^2}$. It is for this term that we use the special property of the radial cut-off function: the alignement of $\nabla \phi_t$ with $N_{x'}$, see above. This property allows us to use the condition $N \cdot u=0$ on $\partial \Omega$. 

We assume that $f\in C^{1,\frac12}(\ol \Omega)$ so that the $C^2$ regularity (up to the boundary) for the second order elliptic equation with constant coefficient for $p$ holds. More precisely,
\begin{equation*}
\|\nabla p\|_{L^\infty(\Omega)} + \|\nabla^2 p\|_{L^\infty(\Omega)} \le C\|f\|_{C^{1,\frac12}(\ol \Omega)}.
\end{equation*}
Since $Mu = f-\nabla p$, we can replace the $\nabla p$ on the left by $Mu$ and replace $\nabla^2 p$ by $\nabla u$, and the estimate still holds. Now, for any $x\in \mathrm{supp}(\nabla \phi_t) \subset \Omega_{(2t)}$, 
\begin{equation*}
  \nabla \phi_t \cdot u(x) = |\nabla \phi_t (x)|N_{Px}\cdot u(x) \le Ct^{-1}|N_{Px}\cdot u(x)|.
\end{equation*}
Note that $N_{Px}\cdot u(Px)=0$, we obtain
\begin{equation*}
  \left|N_{Px}\cdot u(x) \right|= \left| N_{Px}\cdot (u(x)-u(Px))\right| \le \|\nabla u\|_{L^\infty} t. 
\end{equation*}
This estimate holds uniformly in $x$ and, hence, we conclude that $\|\nabla \phi_t \cdot u\|_{L^2(\Omega)}\le C\sqrt{t}$. 

Combining all the estimates above, and choosing $t = \sigma_\eps$, we see that
\begin{equation*}
\sigma_\eps \|\nabla H\|_{L^2(\Omega)} + \|\nabla\cdot H\|_{L^2(\Omega)} \le C(\sqrt{\sigma_\eps} + \kappa_\eta) \|u\|_{W^{1,\infty}(\Omega)}  + C\eps\|u\|_{H^2(\Omega)}.
\end{equation*}
Note that, under the assumption that $f\in C^{1,\frac12}(\ol \Omega)$, the boundedness of $\|u\|_{H^2} + \|\nabla u\|_{L^\infty}$ is guaranteed. We conclude by applying the energy estimate of Proposition \ref{prop:zteq_supc}.

\begin{remark} 
If one assumes that the solution vanishes on the outer boundary $\partial\Omega$, the proof is much shorter, see \cite{Allaire91-2}. This assumption that $u$ vanishes on $\partial \Omega$ can be guaranteed by imposing further assumptions on $f$, as shown by Wolf \cite{MR4367723}: let $f$ be compactly supported in $\Omega$ and further satisfy $\nabla\cdot(M^{-1}f) = 0$. Then one checks easily that the solution $p$ of \eqref{eq:hpdesupc} satisfies $\nabla p = 0$, and hence $u\rvert_{\partial \Omega} = M^{-1}f =0$. A typical example given in \cite{MR4367723} is of the form $f = \rho(|x|)M\Sigma x$,  where $\rho$ is a smooth function on $\R_+$ so that $\rho(|x|)$ is compactly supported in $\Omega$, and $\Sigma$ is a constant anti-symmetric matrix.
\end{remark}

\appendix 

\section{Some analysis tools for perforated domains}
\label{app.a}

For a scalar or vector field $F$ defined in a perforated domain, namely on $\Omega^\eps$, by \emph{zero} or \emph{trivial} extension, we mean $F$ is set as zero inside the holes. The zero extension is often denoted by $\widetilde F$. Let $V_\eps \subset H^1(\Omega^\eps)$ denote the subspace
\begin{equation*}
V_\eps = \{v \in H^1(\Omega^\eps) \,:\, v\rvert_{\Omega \cap \partial \Omega^\eps} = 0\}.
\end{equation*}
It consists of functions with vanishing traces at the boundaries of the holes. A further subspace of $V_\eps$ is $H^1_0(\Omega^\eps)$. A simple but important fact is, if $F\in V_\eps$, then the extended field $\widetilde F$ belongs to $H^1(\Omega)$. 

The following version of Poincar\'e inequality plays an important role in our analysis.
\begin{proposition}[A Poincar\'e inequality]\label{prop:poincare} Let $d\ge 2$. Let $r, R$ be two positive real numbers and $r < R$. Then there exists a constant $C > 0$ that depends only on the dimension $d$, such that for any $u \in H^1(B_R(0))$ satisfying $u = 0$ in $B_r(0)$, we have
\begin{equation}
\label{eq:poincare}
\|u\|_{L^2(B_R)} \le \begin{cases}
CR(\frac{r}{R})^{-\frac{d-2}{2}} \|\nabla u\|_{L^2(B_R)}, &\qquad d\ge 3,\\
CR|\log(\frac{r}{R})|^\frac12 \|\nabla u\|_{L^2(B_R)}, &\qquad d=2.
\end{cases}
\end{equation}
\end{proposition}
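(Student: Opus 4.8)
The plan is to rescale to $R=1$ and then run the classical one-dimensional argument along rays emanating from the origin. Setting $v(x):=u(Rx)$, one has $v\in H^1(B_1)$ with $v=0$ on $B_\rho$, where $\rho:=r/R\in(0,1)$. Tracking the powers of $R$ produced by the change of variables — namely $\|v\|_{L^2(B_1)}=R^{-d/2}\|u\|_{L^2(B_R)}$ and $\|\nabla v\|_{L^2(B_1)}=R^{1-d/2}\|\nabla u\|_{L^2(B_R)}$ — one sees that the asserted inequality is equivalent to
\[
\|v\|_{L^2(B_1)}\le C\,\rho^{-\frac{d-2}{2}}\,\|\nabla v\|_{L^2(B_1)}\quad(d\ge 3),\qquad
\|v\|_{L^2(B_1)}\le C\,|\log\rho|^{\frac12}\,\|\nabla v\|_{L^2(B_1)}\quad(d=2).
\]
So it suffices to treat $R=1$, and I write $u$ for $v$ from now on.

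Next I would pass to polar coordinates $x=s\omega$, $s\in(0,1)$, $\omega\in S^{d-1}$. By the absolute-continuity-on-lines (ACL) characterization of Sobolev functions, combined with Fubini's theorem in polar coordinates, for a.e.\ $\omega$ the map $t\mapsto u(t\omega)$ is absolutely continuous on compact subintervals of $(0,1)$ with $\partial_t u(t\omega)=\omega\cdot\nabla u(t\omega)$ a.e., and — since $u\equiv 0$ a.e.\ on $B_\rho$ — its absolutely continuous representative vanishes at radius $\rho$. Hence, for such $\omega$ and every $s\in(\rho,1)$, the fundamental theorem of calculus followed by Cauchy–Schwarz with the weight $t^{d-1}$ gives
\[
|u(s\omega)|^2=\Bigl|\int_\rho^s\partial_t u(t\omega)\,dt\Bigr|^2\le\Bigl(\int_\rho^1 t^{-(d-1)}\,dt\Bigr)\Bigl(\int_\rho^1 |\partial_t u(t\omega)|^2\,t^{d-1}\,dt\Bigr).
\]

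Then I would estimate the weight integral, $\int_\rho^1 t^{-(d-1)}\,dt\le\frac{1}{d-2}\rho^{-(d-2)}$ for $d\ge3$ and $\int_\rho^1 t^{-1}\,dt=|\log\rho|$ for $d=2$, and integrate the previous display over $B_1$ in polar coordinates. Using $u\equiv 0$ on $B_\rho$ to restrict the $s$-integral to $(\rho,1)$ and bounding $\int_\rho^1 s^{d-1}\,ds\le 1$, the right-hand side collapses (after Fubini) to a constant times $\rho^{-(d-2)}\int_{S^{d-1}}\int_\rho^1|\partial_t u(t\omega)|^2 t^{d-1}\,dt\,d\omega$, which equals that constant times $\|\partial_r u\|_{L^2(B_1\setminus B_\rho)}^2\le\|\nabla u\|_{L^2(B_1)}^2$ since $|\partial_r u|=|\omega\cdot\nabla u|\le|\nabla u|$. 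Taking square roots settles the case $R=1$, and undoing the rescaling yields \eqref{eq:poincare}; the case $d=2$ is identical with $\rho^{-(d-2)}$ replaced by $|\log\rho|$.

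I do not anticipate a genuine obstacle here; the only point demanding a little care is the justification that $u$ admits, for a.e.\ ray from the origin, an absolutely continuous radial restriction that vanishes at radius $\rho$. This is the ACL property of $H^1$ together with $u=0$ a.e.\ on $B_\rho$ (note that on $(\rho,1)$ the weight $s^{d-1}$ is bounded below, so the radial derivative is genuinely $L^2$ there). Alternatively, and more cleanly, one may first approximate $u$ in $H^1(B_1)$ by functions in $C^\infty(\overline{B_1})$ that vanish on a slightly smaller ball — obtained by mollifying $u$ after a small contraction toward the origin — prove the inequality for these smooth functions, and pass to the limit.
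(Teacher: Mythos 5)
Your proof is correct: the rescaling to $R=1$, the one-dimensional estimate along rays with Cauchy--Schwarz weighted by $t^{d-1}$, and the integration in polar coordinates give exactly the stated powers $\rho^{-\frac{d-2}{2}}$ and $|\log\rho|^{\frac12}$ with a constant depending only on $d$, and your handling of the ACL/density point is adequate. The paper does not reproduce a proof but cites \cite[Lemma 3.4.1]{Allaire91-2}, where the argument is this same classical radial one, so your proposal matches the intended proof.
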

\noindent It is clear that if one changes the balls to cubes in the statement, the inequality still holds with $C$ changed slightly. We refer to \cite[Lemma 3.4.1]{Allaire91-2} for the proof. 
Extend the velocity field $u^\eps$ in \eqref{eq:hetstokes} by zero inside the holes. Then inside each $\eps$-cube $Q_{z,\eps} := \eps(z+Q_1)$, we can apply \eqref{eq:poincare} with $R=\eps$ and $r=\eta\eps$. The bounding constant is then $C\sigma_\eps$ with $\sigma_\ep$ defined in \eqref{eq:sigeps}. 

Combining the resulted estimates in each $\eps$-cubes we get
\begin{equation}
\label{eq:uepspoincare}
\|u^\eps\|_{L^2(\Omega^\eps)} \le C\sigma_\eps \|\nabla u^\eps\|_{L^2(\Omega^\eps)}.
\end{equation}
First clarified in \cite{Allaire91-2}, this inequality is the key to derive energy estimates for the system \eqref{eq:hetstokes} in the dilute setting. It takes into account the various asymptotic regimes for the system depending on the relative smallness of $\eta$ with respect to $\eps$. 

As a general method to construct proper test functions for the weak formulation of \eqref{eq:hetstokes}, we introduce the restriction operator of vector fields and the associated pressure fields:
\begin{equation*}
\cR : H^1(\Omega) \to V_\eps,  \qquad \cE : L^2_0(\Omega^\eps) \to L^2_0(\Omega).
\end{equation*}
They satisfy the important duality relation 
\begin{equation}
	\label{eq:REduality}
\int_\Omega (\mathcal{E} p)\; \nabla\cdot u = \int_{\Omega^\eps} p\; \nabla\cdot (\mathcal{R} u), \qquad \forall u \in H^1_0(\Omega), \; \forall p\in L^2_0(\Omega^\eps).
\end{equation}
Such restriction operators were first constructed by Tartar in the case of $\eta = 1$, and generalized by Allaire to the dilute setting. We collect those results below, and for proofs and detailed explanations we refer to \cite[Lemma 2.2.3]{Allaire91-1}.

\begin{proposition}[Restriction operator in the unit cell]\label{prop:restr} For each fixed $\eta \in (0,1)$, there exists a linear operator $\mathcal{R}$ that maps $u\in H^1(Q_1)$ to $\mathcal{R} u \in H^1(Q_1)$, with $\mathcal{R}u = 0$ in $\eta \ol T$ and $\mathcal{R}u = u$ in $Q_1\setminus B_{1/2}$. Moreover, the operator satisfies
\begin{itemize}
	\item[(1)] If $u \in H^1(Q_1)$ and $u = 0$ on $\partial T$, then $\mathcal {R} u = u$ on $Q_1 \setminus (\eta T)$.
	\item[(2)] If $\nabla\cdot u = 0$ in $Q_1$, then $\nabla\cdot (\mathcal {R} u) = 0$ in $Q_1 \setminus (\eta \ol T)$. 
\item[(3)] There exists a constant $C$, independent of $\eta$ or $u$, such that for all $u\in H^1(Q_1)$, the following holds:
\begin{equation}
\|\nabla (\mathcal{R} u)\|_{L^2(Q_1\setminus \eta\ol{T})} \le C\left[\|\nabla u\|_{L^2(Q_1)} + \kappa_\eta \|u\|_{L^2(Q_1)}\right].
\end{equation}
where $\kappa_\eta$ is defined in \eqref{eq:sigeps}
\end{itemize}
\end{proposition}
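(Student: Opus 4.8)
The plan is to adapt the classical construction of the restriction operator, due to Tartar \cite{Tartar} for $\eta=1$ and to Allaire \cite{Allaire91-1} in the dilute regime; I will only sketch it, since the verification of each step is routine. The guiding observation is that $\mathcal{R}$ is required to coincide with $u$ outside $B_{1/2}$ and to vanish inside $\eta\ol T$, so the whole point — and the only place where $\eta$ enters — is to interpolate between these two prescriptions within the shell $B_{1/2}\setminus(\eta\ol T)$. Accordingly I would split the shell into a fixed outer annulus $A_1:=B_{1/2}\setminus B_{1/4}$, whose geometry does not see $\eta$ because $\eta\ol T\subset B_{3\eta/8}\subset B_{1/4}$ for $\eta$ small, and an inner annulus $A_2:=B_{1/4}\setminus(\eta\ol T)$ wrapped around the hole. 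Once the construction is in place, linearity of $\mathcal{R}$ is immediate, and properties (1) and (2) follow by inspection: if $u$ already vanishes on $\partial(\eta T)$ there is nothing to modify and $\mathcal{R}u=u$; and every correction term below will be designed to carry no divergence when $\nabla\cdot u=0$, so that $\nabla\cdot(\mathcal{R}u)=0$ in $Q_1\setminus(\eta\ol T)$ in that case.

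On the outer annulus I would fix a cutoff $\theta\in C^\infty_c(B_{1/2})$ with $\theta\equiv1$ on $B_{1/4}$ and replace $u$ by the interpolant $(1-\theta)u+\theta c$, where $c:=\langle u\rangle_{B_{1/2}}$, correcting the divergence that $\theta$ creates by a Bogovskii field supported in the \emph{fixed} annulus $A_1$, whose operator norm is a universal constant. Two features make this step harmless. First, $\nabla[(1-\theta)u+\theta c]$ is pointwise dominated by $|\nabla u|+|\nabla\theta|\,|u-c|$, and since $\nabla\theta$ lives in $A_1$ the Poincar\'e--Wirtinger inequality turns $\|\,|\nabla\theta|\,|u-c|\,\|_{L^2}$ into a universal multiple of $\|\nabla u\|_{L^2(B_{1/2})}$, with \emph{no} $\|u\|_{L^2}$ cost — this is precisely why one subtracts the mean of $u$ rather than, say, $0$. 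Second, the compatibility integral for the Bogovskii solve reduces, via the divergence theorem together with $\int_{\partial B_{1/4}}N\,d\sigma=0$, to a multiple of $\int_{B_{1/2}}\nabla\cdot u$, which vanishes when $u$ is solenoidal (in the general case this is one bounded scalar, absorbed into the Bogovskii datum). Thus on $A_1$ the modified field agrees with $u$ near $\partial B_{1/2}$, equals the constant $c$ near $\partial B_{1/4}$, and costs only $C\|\nabla u\|_{L^2(B_{1/2})}$.

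The core of the matter is the inner annulus $A_2$, where the field must be driven from the constant $c$ on $\partial B_{1/4}$ down to $0$ on $\partial(\eta T)$ while staying incompressible. Here I would rescale by $\eta$, so that $A_2$ becomes the fixed-shape (large-diameter) domain $B_{1/(4\eta)}\setminus\ol T$ and the transition is produced by an $\eta$-independent exterior datum: a divergence-free field equal to the constant on the outer sphere, vanishing on $\partial T$, and extended by that constant beyond $B_1$ (for $d=2$ realised as the perpendicular gradient of a stream function, so as to keep it solenoidal in spite of the logarithm). Undoing the rescaling, the contribution of $A_2$ to $\|\nabla(\mathcal{R}u)\|^2_{L^2}$ is bounded by $C\kappa_\eta^2|c|^2$: for $d\ge3$ because the Dirichlet energy of the exterior field over $\R^d\setminus\ol T$ is finite, so rescaling produces the factor $\eta^{d-2}=\kappa_\eta^2$; for $d=2$ because the logarithmic capacity estimate produces $|\log\eta|^{-1}=\kappa_\eta^2$. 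In both dimensions this is exactly the quantitative content of the Poincar\'e inequality of Proposition~\ref{prop:poincare} applied in the shell surrounding the hole. Since $|c|=|\langle u\rangle_{B_{1/2}}|\le C\|u\|_{L^2(Q_1)}$, this furnishes the $\kappa_\eta\|u\|_{L^2}$ term; gluing the outer and inner fields along $\partial B_{1/4}$, where both equal $c$, yields an $H^1(Q_1)$ function with all the required properties, and collecting the two contributions gives item (3).

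I expect the main obstacle to be the uniform-in-$\eta$ control of the divergence correction inside $A_2$: the Bogovskii (or Stokes) constant of $B_{1/4}\setminus(\eta\ol T)$ degenerates as $\eta\to0$, so the correction must be performed in the rescaled picture where the geometry is frozen, and one must check the flux compatibility between the incoming profile $c$ on $\partial B_{1/4}$ and the datum $\nabla\cdot u$ in $A_2$; the $d=2$ stream-function realisation additionally requires an $H^2$-type matching of the reference field along $\partial(\eta T)$. For $\eta$ bounded away from $0$ no refinement is needed — one simply invokes Tartar's operator \cite{Tartar}, whose constants are then universal since $\kappa_\eta\sim1$ — and concatenating the two regimes delivers the uniform constant in item (3). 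For the full details I refer to \cite[Lemma 2.2.3]{Allaire91-1}.
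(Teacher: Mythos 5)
Your two-annulus construction (mean subtraction plus a Bogovskii correction on the fixed annulus, and a rescaled capacity-type profile of energy $\kappa_\eta^2|c|^2$ around the hole, with the logarithmic stream-function variant for $d=2$) is sound as far as items (2) and (3) are concerned. But there is a genuine gap at item (1): you assert that ``if $u$ already vanishes on $\partial(\eta T)$ there is nothing to modify and $\mathcal{R}u=u$,'' and this is simply not true of the operator you have built. Your $\mathcal{R}u$ replaces $u$ by the interpolant $(1-\theta)u+\theta c$ on $B_{1/2}\setminus B_{1/4}$ and by the fixed profile multiplied by $c=\langle u\rangle_{B_{1/2}}$ on $B_{1/4}\setminus(\eta\ol T)$, \emph{irrespective} of the boundary values of $u$ on $\partial(\eta T)$; for a non-constant $u$ vanishing on $\partial(\eta T)$ one gets $\mathcal{R}u\ne u$ inside $B_{1/2}$. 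Item (1) is not cosmetic: it is what makes $\mathcal{R}$ the identity on test functions vanishing at the holes, it is the content of Corollary~\ref{coro:restr}(1), and it is needed for the duality relation \eqref{eq:REduality} to produce the explicit pressure extension \eqref{eq:cEdef} (one must be able to test with fields supported in the fluid part and have $\mathcal{R}u=u$ for them). A linear operator cannot be patched by ``branching'' on whether the trace vanishes, so the construction itself has to change.

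The structural fix — and this is what the reference the paper relies on actually does (the paper gives no proof and quotes \cite[Lemma 2.2.3]{Allaire91-1}) — is to define $\mathcal{R}u:=u-W[u]$ in $B_{1/2}\setminus(\eta\ol T)$, where $W[u]$ solves a Stokes boundary-value problem in that annulus with data $W=u$ on $\partial(\eta T)$, $W=0$ on $\partial B_{1/2}$, and divergence equal to the constant $|B_{1/2}\setminus\eta\ol T|^{-1}\int_{\eta T}\nabla\cdot u$. Then (1) and (2) follow from uniqueness, because all the data of $W[u]$ vanish when $u=0$ on $\partial(\eta T)$, respectively when $\nabla\cdot u=0$; the whole analytic work is the uniform-in-$\eta$ estimate $\|\nabla W[u]\|_{L^2}\le C(\|\nabla u\|_{L^2}+\kappa_\eta\|u\|_{L^2})$, obtained by the rescaling/capacity argument you sketched, but applied to the full trace of $u$ on $\partial(\eta T)$ rather than only to its cell average $c$. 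In other words, your inner-annulus computation is the right estimate for the special case $u\equiv c$, but the operator must carry the actual boundary datum $u\rvert_{\partial(\eta T)}$; handling that datum (not just the constant) is the missing and essential step, and it is precisely what Allaire's scaling argument supplies.
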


We then apply this construction with proper scaling in each $\eps$-cube around the holes in $\Omega^\eps$, and get the following construction result.
\begin{corollary}[Restriction operator on the perforated domain]\label{coro:restr}
For each pair of $\eps,\eta \in (0,1)$, let the domain $\Omega^\eps = \Omega^{\eps,\eta}$ be as in Assumption \ref{assump:geo}. Then there is a linear operator, still denoted by $\mathcal{R}$, which maps $u \in H^1_0(\Omega)$ to $\mathcal{R} u \in H^1_0(\Omega)$, with $\mathcal{R} u = 0$ in the holes $\Omega\setminus \Omega^\eps$. Moreover, $\mathcal{R}$ satisfies:
\begin{itemize}
  \item[(1)] If $u\rvert_{\Omega^\eps} \in H^1_0(\Omega^\eps)$, then $\mathcal{R} u$ is the zero extension of $u \rvert_{\Omega^\eps}$ on $\Omega\setminus \Omega^\eps$.
	\item[(2)] If $\nabla \cdot u = 0$ in $\Omega$, then $\nabla \cdot (\mathcal{R} u) = 0$ in $\Omega^\eps$.
\item[(3)] there is a constant $C$ independent of $\eta, \eps$ or $u$, such that
\begin{equation}
\label{eq:restrbdd}
\|\nabla (\mathcal{R} u) \|_{L^2(\Omega^\eps)} \le C\left[\|\nabla u\|_{L^2(\Omega)} + \frac{1}{\sigma_\eps} \|u\|_{L^2(\Omega)}\right].
\end{equation}
\end{itemize}
\end{corollary}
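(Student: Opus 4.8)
The plan is to build $\mathcal{R}$ by pasting together rescaled copies of the unit-cell restriction operator, which I will denote $\mathcal{R}_0$, from Proposition~\ref{prop:restr}. Recall that by the construction \eqref{eq:Oepsdef} the holes of $\Omega^\eps$ occupy only the cubes $Q_{z,\eps}:=\eps(z+Q_1)$ with $z\in\mathcal{I}_\eps$, that these cubes are pairwise disjoint and contained in $\Omega$, and that the hole inside $Q_{z,\eps}$ is $\eps(z+\eta\ol T)$. Given $u\in H^1_0(\Omega)$ and $z\in\mathcal{I}_\eps$, set $u_z(y):=u(\eps(y+z))$ for $y\in Q_1$, and define
\[
(\mathcal{R}u)(x):=
\begin{cases}
(\mathcal{R}_0 u_z)\!\left(\tfrac{x}{\eps}-z\right), & x\in Q_{z,\eps},\ z\in\mathcal{I}_\eps,\\[2pt]
u(x), & x\in\Omega\setminus\bigcup_{z\in\mathcal{I}_\eps}Q_{z,\eps},
\end{cases}
\]
the second region containing the hole-free buffer $K_\eps$ near $\partial\Omega$. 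This is clearly linear in $u$.

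Next I would check consistency and the two structural properties. Since $\mathcal{R}_0 u_z=u_z$ on $Q_1\setminus B_{1/2}$, the field $\mathcal{R}u$ coincides with $u$ in a neighbourhood of every face $\eps(z+\partial Q_1)$ and across $\partial K_\eps$, so the pieces match up and $\mathcal{R}u\in H^1(\Omega)$; moreover $\mathcal{R}u=u$ near $\partial\Omega$, hence $\mathcal{R}u\in H^1_0(\Omega)$, and by construction $\mathcal{R}u=0$ in each $\eps(z+\eta\ol T)$, i.e.\ in $\Omega\setminus\Omega^\eps$. Property (1) follows from item (1) of Proposition~\ref{prop:restr}: if $u|_{\Omega^\eps}\in H^1_0(\Omega^\eps)$ then each $u_z$ vanishes on $\partial(\eta T)$, so $\mathcal{R}_0 u_z=u_z$ on $Q_1\setminus(\eta T)$, which rescales to $\mathcal{R}u=u$ on the fluid part of $Q_{z,\eps}$; thus $\mathcal{R}u$ is the trivial extension of $u|_{\Omega^\eps}$. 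Property (2) follows from item (2): the dilation $x\mapsto\eps(y+z)$ preserves the solenoidal condition, so $\nabla\cdot u=0$ in $\Omega$ gives $\nabla\cdot u_z=0$ in $Q_1$, hence $\nabla\cdot(\mathcal{R}_0 u_z)=0$ in $Q_1\setminus(\eta\ol T)$; rescaling and combining with $\mathcal{R}u=u$ off the hole-carrying cubes yields $\nabla\cdot(\mathcal{R}u)=0$ in $\Omega^\eps$.

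For the estimate \eqref{eq:restrbdd}, a hole-free cube contributes only $\|\nabla u\|_{L^2}$ over that cube. On $Q_{z,\eps}$ with $z\in\mathcal{I}_\eps$, the change of variables $x=\eps(y+z)$ gives $\int_{Q_{z,\eps}}|\nabla(\mathcal{R}u)|^2\,dx=\eps^{d-2}\int_{Q_1}|\nabla(\mathcal{R}_0 u_z)|^2\,dy$, together with $\int_{Q_1}|\nabla u_z|^2=\eps^{2-d}\int_{Q_{z,\eps}}|\nabla u|^2$ and $\int_{Q_1}|u_z|^2=\eps^{-d}\int_{Q_{z,\eps}}|u|^2$. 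Inserting item (3) of Proposition~\ref{prop:restr} (squared, with its constant uniform in $\eta$) and using the identity $\kappa_\eta/\eps=1/\sigma_\eps$ coming from \eqref{eq:sigeps}, I get
\[
\int_{Q_{z,\eps}}|\nabla(\mathcal{R}u)|^2\le C\Big(\int_{Q_{z,\eps}}|\nabla u|^2+\sigma_\eps^{-2}\int_{Q_{z,\eps}}|u|^2\Big).
\]
Summing over all $\eps$-cubes and over $\Omega\setminus\bigcup_{z\in\mathcal{I}_\eps}Q_{z,\eps}$ yields $\|\nabla(\mathcal{R}u)\|_{L^2(\Omega^\eps)}^2\le C\big(\|\nabla u\|_{L^2(\Omega)}^2+\sigma_\eps^{-2}\|u\|_{L^2(\Omega)}^2\big)$, which is \eqref{eq:restrbdd}.

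The only genuinely delicate points are the uniformity in $\eta$ of the constant in Proposition~\ref{prop:restr} — this is precisely what makes the final constant independent of $\eps$, $\eta$ and $u$ — and the consistency of the gluing, which hinges on $\mathcal{R}_0$ leaving $u$ untouched outside $B_{1/2}$ so that the local surgery never reaches the cube faces. Beyond these, the argument is routine bookkeeping and I expect no substantial obstacle.
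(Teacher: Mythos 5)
Your construction is correct and is exactly the argument the paper intends: the paper proves Corollary \ref{coro:restr} by "applying this construction with proper scaling in each $\eps$-cube around the holes," i.e., by pasting rescaled copies of the unit-cell operator of Proposition \ref{prop:restr} into the cubes $\eps(z+Q_1)$, $z\in\mathcal{I}_\eps$, and using the identity elsewhere, with the same change-of-variables bookkeeping turning $\kappa_\eta$ into $\kappa_\eta/\eps=\sigma_\eps^{-1}$. Your checks of the gluing (via $\mathcal{R}_0u=u$ outside $B_{1/2}$), of properties (1)--(2), and of the scaling in \eqref{eq:restrbdd} are all sound and match the paper's (and Allaire's) route.
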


The extension operator $\cE$ is defined by the duality relation \eqref{eq:REduality}, again by Tartar. Up to an additive constant over $\Omega$, $\mathcal{E}p$ is defined by
\begin{equation}
	\mathcal{E} p = \begin{cases} p, \qquad &x\in \Omega^\eps,\\
		\frac{1}{|Q_{z,\eps}|} \int_{\eps(z+Y_{\rm f})} p, \qquad &x\in \eps(z+\eta T), z\in \mathcal{I}_\eps. 
	\end{cases}
	\label{eq:cEdef}
\end{equation}


Using classical energy estimates and the Poincar\'e inequality \eqref{eq:poincare}, we get the following energy estimate: there exists a constant $C$ that is independent of $\eps$, $\eta$ and $f$ so that 
\begin{equation}
\label{eq:uunif}
\|\nabla u^\eps\|_{L^2(\Omega)} \le C(1\wedge \sigma_\eps)\|f\|_{L^2(\Omega)}, \qquad \|u^\eps\|_{L^2(\Omega)} \le C(1\wedge \sigma_\eps^2)\|f\|_{L^2(\Omega)}.
\end{equation}
For the pressure extension defined by \eqref{eq:REduality} and \eqref{eq:cEdef}, and by the properties of the restriction operator \eqref{eq:restrbdd}, we can derive the following estimate: there exists a constant $C$, depending only on $\Omega$ and $T$ such that
\begin{equation}
\label{eq:punif}
\|\mathcal{E} p^\eps\|_{L^2(\Omega)} \le C\|f\|_{L^2(\Omega)}.
\end{equation}
In view of the formula \eqref{eq:cEdef} that holds up to an additive constant, we see that the above estimate still hold if $\mathcal{E}p^\eps$ is replaced by $p^\eps$.

\section{Basic energy estimates}
\label{app.b}

We first record some basic energy estimate for Stokes systems, which will be used systematically in our convergence rates analysis.

\begin{proposition}
 \label{prop:zteq_supc}
 Suppose that Assumption \ref{assump:geo} holds, and $(v,p)$ solves
\begin{equation}
  \label{eq:vpsystem}
  \left\{
	\begin{aligned}
	  &-\sigma^2 \Delta v + \nabla p = b + \sigma (\nabla\cdot F + s), &\qquad \text{in } \Omega^\eps,\\
	&\nabla \cdot v = g, &\qquad \qquad \text{in } \Omega^\eps,\\
	&v = \mathbf{1}_{\partial \Omega}(x)h(x), &\qquad \text{in } \partial \Omega^\eps,
	\end{aligned}
	\right.
\end{equation}
where $\sigma >0$ is a real number, $b\in L^2(\Omega)$ is a vector field, $F\in L^2(\Omega)$ is a matrix field, $g \in L^2(\Omega)$ is a scalar field and $s\in H^{-1}(\Omega^\eps)$. Moreover, $v$ and $p$ are extended by zero in the holes of $\Omega^\eps$; $p$ is mean zero in $\Omega$; $g$ and $h$ satisfy the compatibility assumption 
\begin{equation}
\label{eq:ghcompatible}
  \int_{\Omega} g = \int_{\partial \Omega} N \cdot h.
\end{equation}
Then there exists a universal constant $C>0$ such that, for any $H \in H^1(\Omega)$ ($H$ is a lifting of the boundary data $h$ on $\partial\Omega$) that satisfies $H+v \in H^1_0(\Omega)$ and $H+v = 0$ in the holes of $\Omega^\eps$, we have  
  \begin{equation}
	\begin{aligned}
	  \sigma \|\nabla v\|_{L^2(\Omega)} + \|v\|_{L^2(\Omega)} + \|p\|_{L^2(\Omega^\eps)} \le & C\left\{\|b\|_{L^2(\Omega^\eps)} + \|F\|_{L^2(\Omega^\eps)} + \|s\|_{H^{-1}} + \|g\|_{L^2} \right.\\
	&\qquad  + \left. \|\nabla \cdot H\|_{L^2} + \sigma \|\nabla H\|_{L^2}\right\}
  \end{aligned}
	\label{eq:benergy}
  \end{equation}
\end{proposition}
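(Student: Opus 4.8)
The plan is to reduce \eqref{eq:vpsystem} to a homogeneous-boundary problem by subtracting the lifting $H$, then apply the standard energy estimate for the Stokes system on the perforated domain together with the Poincar\'e inequality \eqref{eq:uepspoincare} and the restriction operator $\cR$ of Corollary \ref{coro:restr}. Concretely, I would set $w := v + H \in H^1_0(\Omega)$, which vanishes in the holes of $\Omega^\eps$, so $w\in V_\eps$ and $\widetilde w = w$. The pair $(w,p)$ then solves, in $\Omega^\eps$, the system $-\sigma^2\Delta w + \nabla p = b + \sigma(\nabla\cdot F + s) - \sigma^2\Delta H$ with $\nabla\cdot w = g + \nabla\cdot H =: \tilde g$ and $w = 0$ on $\partial\Omega^\eps$. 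The term $-\sigma^2\Delta H = \sigma(\nabla\cdot(-\sigma\nabla H))$ can be absorbed into the $\nabla\cdot F$ part by replacing $F$ with $F - \sigma\nabla H$, so the right-hand side keeps the same structural form.

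The core step is the a priori estimate for this homogeneous-boundary problem. First I would reduce to the divergence-free case: using the restriction operator $\cR$ (or rather a Bogovskii-type operator on $\Omega$ composed with $\cR$) one constructs $w_0 \in V_\eps$ with $\nabla\cdot w_0 = \tilde g$ in $\Omega^\eps$ and $\|\nabla w_0\|_{L^2(\Omega^\eps)} \le C(\|\tilde g\|_{L^2} + \tfrac1{\sigma_\eps}\|\cdot\|_{L^2})$; here I would take $w_0 = \cR(W)$ where $W \in H^1_0(\Omega)$ solves $\nabla\cdot W = \tilde g$ with $\|\nabla W\|_{L^2(\Omega)} \le C\|\tilde g\|_{L^2(\Omega)}$ (solvable since $\int_\Omega \tilde g = \int_\Omega g + \int_{\partial\Omega} N\cdot h - \int_{\partial\Omega}N\cdot h = \int_{\partial\Omega}N\cdot h$ wait — more directly $\int_\Omega \tilde g = \int_{\partial\Omega}N\cdot(H+v) = 0$ since $H+v\in H^1_0(\Omega)$), and the bound $\|W\|_{L^2(\Omega)}\le C\|\nabla W\|_{L^2(\Omega)}$. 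Then $w - w_0$ is divergence-free in $\Omega^\eps$ and lies in $V_\eps$; testing the equation against $w - w_0$, integrating by parts (the pressure drops out against the divergence-free test field in $\Omega^\eps$), and using $\langle s, w-w_0\rangle \le \|s\|_{H^{-1}(\Omega^\eps)}\|w-w_0\|_{H^1(\Omega^\eps)}$ together with \eqref{eq:uepspoincare} to control $\|w - w_0\|_{L^2}$ by $\sigma_\eps\|\nabla(w-w_0)\|_{L^2}$, one obtains $\sigma\|\nabla w\|_{L^2(\Omega^\eps)} \lesssim \|b\|_{L^2} + \|F\|_{L^2} + \|s\|_{H^{-1}} + \|g\|_{L^2} + \|\nabla\cdot H\|_{L^2} + \sigma\|\nabla H\|_{L^2}$. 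The factor $\sigma$ vs $\sigma_\eps$: here $\sigma = \sigma_\eps$ in the application, and \eqref{eq:uepspoincare} gives exactly the $\sigma_\eps$-weighted Poincar\'e needed to close the estimate uniformly; the $\|v\|_{L^2}$ bound then follows by one more application of \eqref{eq:uepspoincare} to $w$ (and the triangle inequality with $\|H\|_{L^2}\le C\|\nabla H\|_{L^2}$, or just keeping $\|\nabla\cdot H\|$, $\sigma\|\nabla H\|$ on the right).

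The pressure estimate is recovered by duality via the extension operator $\cE$: for any $\varphi \in L^2_0(\Omega)$, write $\varphi = \cE(p\text{-test})$... more precisely, given $\varphi\in L^2_0(\Omega^\eps)$ extend $\varphi$ to $\Omega$ and solve $\nabla\cdot \Psi = \varphi - \langle\varphi\rangle$ with $\Psi\in H^1_0(\Omega)$, then use $\cR\Psi$ as a test field in the weak formulation; the duality relation \eqref{eq:REduality} turns $\int_{\Omega^\eps} p\,\nabla\cdot(\cR\Psi)$ into $\int_\Omega (\cE p)\varphi$, and since $p = \cE p$ on $\Omega^\eps$ one pairs $p$ against a generic $L^2_0(\Omega^\eps)$ function, yielding $\|p\|_{L^2(\Omega^\eps)} \le C\big(\sigma\|\nabla v\|_{L^2} + \|b\|_{L^2} + \|F\|_{L^2} + \|s\|_{H^{-1}} + \sigma\|\nabla H\|_{L^2}\big)$ after using the already-proven velocity bound and \eqref{eq:restrbdd}. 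Adding the three estimates gives \eqref{eq:benergy}. The main obstacle I anticipate is bookkeeping the $\sigma$-weights so the constant is genuinely independent of $\eps, \eta$ (i.e.\ of $\sigma_\eps$): the Poincar\'e inequality \eqref{eq:uepspoincare} must be invoked with its sharp $\sigma_\eps$ constant precisely where the low-order term $\|v\|_{L^2}$ or the $H^{-1}$ pairing appears, and one must check that the Bogov­skii correction $w_0$ contributes only terms already on the right-hand side — in particular that $\tfrac1{\sigma_\eps}\|W\|_{L^2(\Omega)} = \tfrac1{\sigma_\eps}\|\nabla\cdot H + g\|$-type contributions are dominated, which works because $\sigma = \sigma_\eps$ multiplies $\|\nabla w_0\|$ and cancels the $\tfrac1{\sigma_\eps}$.
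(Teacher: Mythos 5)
Your overall skeleton (energy testing plus a duality argument for the pressure via $\cR$ and \eqref{eq:REduality}) is close in spirit to the paper's proof, but the central reduction step has a genuine gap. You claim to build $w_0=\cR(W)$ with $\nabla\cdot w_0=\tilde g$ in $\Omega^\eps$, where $W\in H^1_0(\Omega)$ is a Bogovskii solution of $\nabla\cdot W=\tilde g$ in $\Omega$. The restriction operator does not have this property: item (2) of Proposition~\ref{prop:restr}/Corollary~\ref{coro:restr} preserves the divergence only for \emph{divergence-free} fields, and for a general $W$ the Tartar--Allaire construction modifies the divergence in each perforated cell (it redistributes the flux of $W$ through $\partial(\eps\eta T)$ into the surrounding fluid annulus), so $\nabla\cdot(\cR W)\neq \nabla\cdot W$ there; only the weak duality relation \eqref{eq:REduality} survives. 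A uniform (in $\eps,\eta$) right inverse of the divergence on $\Omega^\eps$ is exactly the nontrivial tool the paper does not provide, so your divergence-free reduction is unsupported. This matters structurally: without it, the test field $w-w_0$ is not solenoidal in $\Omega^\eps$, the pressure term does not drop, and your ordering (velocity bound first, pressure by duality afterwards) becomes circular, since the leftover pressure term needs $\|p\|_{L^2(\Omega^\eps)}$, which you only estimate later using the velocity bound. There is also a smaller slip in the compatibility check: $\nabla\cdot(v+H)=\tilde g$ holds only in $\Omega^\eps$ (in the holes $v+H=0$ while $g+\nabla\cdot H$ need not vanish), so $\int_\Omega\tilde g\neq 0$ in general, only $\int_{\Omega^\eps}\tilde g=0$; and $\|H\|_{L^2}\le C\|\nabla H\|_{L^2}$ is not available since $H\notin H^1_0(\Omega)$.

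The paper avoids all of this by reversing the order and never requiring a pointwise divergence identity: first it bounds the extended pressure by testing the equation with $\cR w$, where $w\in H^1_0(\Omega)$ solves $\nabla\cdot w=\hat p$ in $\Omega$; the duality relation \eqref{eq:REduality} converts $\int_{\Omega^\eps}p\,\nabla\cdot(\cR w)$ into $\|\hat p\|_{L^2}^2$, giving $\|\hat p\|_{L^2}\le C(\sigma\|\nabla v\|_{L^2}+\|b\|_{L^2}+\|F\|_{L^2}+\|s\|_{H^{-1}})$ with the velocity term still unknown. It then tests with $v+H\in H^1_0(\Omega^\eps)$, keeps the pressure term $\int_{\Omega^\eps}p\,\nabla\cdot(v+H)=\int p\,(g+\nabla\cdot H)$, inserts the pressure bound, and absorbs $\sigma\|\nabla v\|_{L^2}$ by Young's inequality. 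If you replace your Bogovskii correction by this two-step absorption (or, alternatively, invoke an independently proven uniform Bogovskii operator on perforated domains, which is outside this paper's toolkit), your argument closes; as written, the construction of $w_0$ is the missing ingredient.
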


Note that by the assumptions $v$ and $p$ are uniquely determined.

\begin{proof}\ \\
\noindent\underline{Step 1: estimates for the pressure field.}\\
Let $\hat p$ be the extension of $p$ and let $w \in H^1_0(\Omega)$ be a vector field so that $\nabla\cdot w = \hat p$. Let $\cR w$ be the restricted field, then
  \begin{equation*}
	\|\cR w\|_{L^2(\Omega)}  + \sigma \|\nabla \cR w\|_{L^2(\Omega)} \le C\|w\|_{H^1(\Omega)} \le C\|\hat p\|_{L^2}.
  \end{equation*}
  Test $\cR w$ against the equation of $(v,p)$, we get
\begin{equation*}
  \begin{aligned}
	\|\hat p\|_{L^2}^2 &= \sigma^2 \int \nabla v:\nabla \cR w - \int b\cdot \cR w + \sigma \int F:\nabla \cR w - \sigma \langle s,\cR w\rangle\\
	&\le \left(\sigma \|\nabla v\|_{L^2} + \|b\|_{L^2} + \|F\|_{L^2} + \|s\|_{H^{-1}} \right) (\sigma \|\nabla \cR w\|_{L^2}).
\end{aligned}
\end{equation*}
We can hence bound the pressure field by
\begin{equation*}
  \|\hat p\|_{L^2} \le  C\left(\sigma \|\nabla v\|_{L^2} + \|b\|_{L^2} + \|F\|_{L^2} + \|s\|_{H^{-1}} \right).
\end{equation*}

\smallskip

\noindent\underline{Step 2: estimates for the velocity field.}\\
By assumption $H+v\in H^1_0(\Omega^\eps)$. Test it against the equations of $(v,p)$, we obtain
\begin{equation*}
  \sigma^2 \int_{\Omega^\eps} \nabla v : (\nabla v + \nabla H) = \int_{\Omega^\eps} p \nabla\cdot (v+H) +  b\cdot (v + H) - \sigma F:(\nabla v + \nabla H)\, dx + \sigma \langle s, v+H\rangle.
\end{equation*}
From this we easily get
\begin{equation*}
  \begin{aligned}
  \sigma^2 \|\nabla v\|_{L^2(\Omega)}^2 &\le \sigma^2 \|\nabla v\|_{L^2}\|\nabla H\|_{L^2} + \|\hat p\|_{L^2}(\|g\|_{L^2} + \|\nabla\cdot H\|_{L^2}) \\
  &\qquad \qquad + (\|b\|_{L^2} + \|F\|_{L^2} + \|s\|_{H^{-1}})(\sigma \|\nabla v\|_{L^2} + \sigma\|\nabla H\|_{L^2}).
\end{aligned}
\end{equation*}
The desired estimate \eqref{eq:benergy} then follows immediately.
\end{proof}

\section{The unified framework for qualitative convergence}
\label{sec.app.qual}

For the sake of completeness, we recall here the qualitative results from the paper \cite{MR4145838}. The idea is to combine the classical oscillating test function method (see e.g. \cite{Tartar}) with the asymptotic analysis of the two-scale cell problems to prove qualitative homogenization results.

The basic energy estimates \eqref{eq:uunif} and \eqref{eq:punif} suggest that, as $\eps \to 0$, $u^\eps/(1\wedge \sigma^2_\eps)$ and $p^\eps$ converge (at least weakly). The limits are the homogenized (effective) model for \eqref{eq:hetstokes}. As mentioned earlier, the effective equations can be derived by the informal two-scale expansion method. In the dilute setting, the following qualitative convergence result is due to Allaire \cite{Allaire91-1,Allaire91-2}.

\begin{theorem}
	\label{thm:homqual} Under Assumption \ref{assump:geo}, let $M$ be the symmetric positive definite matrix defined in \eqref{eq:Mdef}. Then the following holds. 
\begin{itemize}
\item[(1)] In the \emph{\underline{dilute super-critical}} setting, i.e., $\sigma_\eps \to 0$ as $\eps \to 0$,  the sequence $\frac{u^\eps}{\sigma^2_\eps}$ converges weakly in $L^2(\Omega)$ to $u$, and $\mathcal{E}p^\eps$ converges strongly to $p$ in $L^2_0(\Omega)$.
\item[(2)] In the \emph{\underline{critical setting}}, i.e., $\sigma_\eps \to \sigma_0$ as $\eps \to 0$ with $\sigma_0 \in (0,\infty)$, the sequence $u^\eps$ converges weakly in $H^1_0(\Omega)$ and strongly in $L^2(\Omega)$ to $u$, and $\mathcal{E}p^\eps$ converges weakly in $L^2_0(\Omega)$ to $p$.
\item[(3)] In the \emph{\underline{sub-critical}} setting, i.e., $\sigma_\eps \to \infty$ as $\eps \to 0$, the sequence $u^\eps$ converges weakly in $H^1_0(\Omega)$ and strongly in $L^2(\Omega)$ to $u$, and $\mathcal{E}p^\eps$ converges strongly in $L^2_0(\Omega)$ to $p$.
\end{itemize}
\end{theorem}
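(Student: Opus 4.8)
The plan is to run the classical oscillating test function method of Tartar, with the rescaled two-scale cell functions $(v^\eps_k,q^\eps_k)$ of \eqref{eq:vqeps}--\eqref{eq:rscell} playing the role of the oscillating test functions; the asymptotics collected in Theorem~\ref{lem:cellqual} and Lemma~\ref{lem:vq} are exactly what is needed to pass to the limit, and the three regimes will differ only through the behaviour of $\nabla v^\eps_k$ and of $\sigma^{-2}_\eps u^\eps$. First I would record compactness: by the energy estimates \eqref{eq:uunif}, \eqref{eq:punif} and the Poincar\'e inequality \eqref{eq:uepspoincare}, the sequence $u^\eps/(1\wedge\sigma^2_\eps)$ is bounded in $L^2(\Omega)$ -- and, in the critical and sub-critical regimes, even in $H^1_0(\Omega)$ since there $\|\nabla u^\eps\|_{L^2}\le C$ -- while $\cE p^\eps$ is bounded in $L^2_0(\Omega)$; along a subsequence these converge weakly to limits $u$ and a mean-zero $p$. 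Next I would read off the constraints on $u$: the zero extension $\widetilde{u^\eps}$ belongs to $H^1_0(\Omega)$ and is divergence free in $\Omega$, so $\int_\Omega\bigl(\widetilde{u^\eps}/(1\wedge\sigma^2_\eps)\bigr)\cdot\nabla\psi=0$ for every $\psi\in C^\infty(\ol\Omega)$; passing to the limit gives $\nabla\cdot u=0$ in $\Omega$ together with $u\cdot N=0$ on $\partial\Omega$, and in the critical and sub-critical regimes the weak $H^1_0$-limit upgrades this to $u\in H^1_0(\Omega)$, i.e.\ $u=0$ on $\partial\Omega$.

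The heart of the argument is the oscillating test function identity. For $\varphi\in C^\infty_c(\Omega)$ both $v^\eps_k\varphi$ and $u^\eps\varphi$ lie in $H^1_0(\Omega^\eps)$, so I would test \eqref{eq:hetstokes} against $v^\eps_k\varphi$ and \eqref{eq:vqbdhole} against $u^\eps\varphi$ and subtract. The symmetric terms $\int_{\Omega^\eps}\varphi\,\nabla u^\eps:\nabla v^\eps_k$ cancel, the divergence-free conditions remove the contributions of $\nabla\cdot(v^\eps_k\varphi)$ and $\nabla\cdot(u^\eps\varphi)$, and since $\varphi$ is supported away from $\partial\Omega$ the term $s_{\eps,k}$ in \eqref{eq:vqbdhole} does not contribute for $\eps$ small; what remains is
\begin{equation*}
\int\nabla u^\eps:(\nabla\varphi\otimes v^\eps_k)-\int\nabla v^\eps_k:(\nabla\varphi\otimes u^\eps)-\int p^\eps\,v^\eps_k\cdot\nabla\varphi+\int q^\eps_k\,u^\eps\cdot\nabla\varphi=\int f\cdot v^\eps_k\,\varphi-\frac{1}{\sigma^2_\eps}\int\varphi\,e_k\cdot u^\eps .
\end{equation*}
I would then pass to the limit using $v^\eps_k\to M^{-1}e_k$ strongly in $L^2(\Omega)$ (Theorem~\ref{lem:cellqual}(3)), the bounds $\|\nabla v^\eps_k\|_{L^2},\|q^\eps_k\|_{L^2}\le C\sigma^{-1}_\eps$ together with \eqref{eq:uunif}, the weak vanishing $q^\eps_k\rightharpoonup0$ in $L^2$ in the critical regime (Lemma~\ref{lem:vq}(2)), and the fact that $\nabla v^\eps_k\rightharpoonup0$ in $L^2$ in the critical regime (Theorem~\ref{lem:cellqual}(2)) while $\nabla v^\eps_k\to0$ strongly in $L^2$ in the sub-critical regime. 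In the super-critical regime every quadratic term is $O(\sigma_\eps)$ and the source $\sigma^{-2}_\eps\int\varphi\,e_k\cdot u^\eps\to\int\varphi\,u_k$, so the identity becomes $-\int p\,M^{-1}e_k\cdot\nabla\varphi=\int (M^{-1}e_k\cdot f)\,\varphi-\int\varphi\,u_k$; in the critical regime $\int\nabla u^\eps:(\nabla\varphi\otimes v^\eps_k)\to\int\nabla(M^{-1}e_k\cdot u)\cdot\nabla\varphi$ contributes a Laplacian and the source tends to $\sigma^{-2}_0\int\varphi\,u_k$; in the sub-critical regime the source vanishes. Letting $\varphi$ and $k$ vary and using that $M$ is symmetric and invertible, the limiting system reads $Mu=f-\nabla p$, resp.\ $-\Delta u+\nabla p+\sigma^{-2}_0Mu=f$, resp.\ $-\Delta u+\nabla p=f$, which combined with the constraints above is exactly \eqref{eq:hpdesupc}, \eqref{eq:hpdec}, \eqref{eq:hpdesubc}. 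Since each of these problems has a unique solution with $p\in L^2_0(\Omega)$, the whole sequence converges, not just a subsequence.

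It remains to upgrade to the strong convergences claimed. In the critical and sub-critical regimes $u^\eps$ is bounded in $H^1_0(\Omega)$, so $u^\eps\to u$ strongly in $L^2(\Omega)$ by Rellich; in the critical regime one can only expect weak convergence of $\cE p^\eps$, consistent with the pressure corrector $q^\eps_k$ being of order one there. For the pressure in the sub-critical regime I would argue by duality: pick $\Psi^\eps\in H^1_0(\Omega)$ with $\nabla\cdot\Psi^\eps=\cE p^\eps-p$ and $\|\Psi^\eps\|_{H^1}\le C\|\cE p^\eps-p\|_{L^2}$, so $\Psi^\eps\rightharpoonup0$ in $H^1_0(\Omega)$; one checks $\cR\Psi^\eps-\Psi^\eps\to0$ strongly in $L^2(\Omega)$ (it lies in $H^1_0$ of the $O(\eps)$-balls around the holes, whence an $O(\eps)$ Poincar\'e bound) and hence $\cR\Psi^\eps\rightharpoonup0$ in $H^1_0(\Omega)$. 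Expanding $\|\cE p^\eps-p\|^2_{L^2}=\int(\cE p^\eps-p)\,\nabla\cdot\Psi^\eps$ with the duality relation \eqref{eq:REduality}, with \eqref{eq:hetstokes} tested against $\cR\Psi^\eps$, and with the limit Stokes system tested against $\Psi^\eps$, one is left with $\int\nabla u^\eps:\nabla(\cR\Psi^\eps)-\int\nabla u:\nabla\Psi^\eps-\int f\cdot(\cR\Psi^\eps-\Psi^\eps)$; the energy identity $\|\nabla u^\eps\|^2_{L^2}=\int f\cdot u^\eps\to\int f\cdot u=\|\nabla u\|^2_{L^2}$ forces $\nabla u^\eps\to\nabla u$ strongly in $L^2$, and then all three terms vanish (strong-times-weak, weak, and strong respectively), giving $\cE p^\eps\to p$ in $L^2_0(\Omega)$.

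I expect the genuinely delicate point to be the strong convergence of the pressure in the super-critical (Darcy) regime. There the energy identity only gives $\sigma^{-2}_\eps\|\nabla u^\eps\|^2_{L^2}\to\int f\cdot u$, the scaled velocity $u^\eps/\sigma^2_\eps$ converges only weakly a priori, and the limit $u$ satisfies merely $u\cdot N=0$ (not $u=0$) on $\partial\Omega$; in the duality argument the term $\int\nabla u^\eps:\nabla(\cR\Psi^\eps)$ is then only $O(1)$ -- since $\|\nabla u^\eps\|_{L^2}=O(\sigma_\eps)$ while $\|\nabla(\cR\Psi^\eps)\|_{L^2}=O(\sigma^{-1}_\eps)$ -- rather than $o(1)$, and closing the estimate requires exploiting the structure of the Darcy limit more carefully. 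This is precisely the obstruction dealt with in Section~\ref{sec:app:sup} in the quantitative setting; alternatively, for this regime one may simply invoke the rate \eqref{eq:err_supc} of Theorem~\ref{thm:homerr}, which contains the strong convergences as a special case under the extra regularity assumption made there.
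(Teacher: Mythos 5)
Your proposal follows essentially the same route as the paper's Appendix C proof: compactness from the energy estimates \eqref{eq:uunif}--\eqref{eq:punif}, the oscillating-test-function identity \eqref{eq:qualweak} obtained by testing \eqref{eq:hetstokes} against $v^\eps_k\varphi$ and the cell problem against $u^\eps\varphi$, regime-by-regime limit passage using Theorem \ref{lem:cellqual} and Lemma \ref{lem:vq}, and uniqueness of the limit problem to upgrade subsequential to full convergence. Your only departures are the Bogovskii-duality argument for the strong pressure convergence in the sub-critical case (correct, and more than the paper's appendix writes, since the strong pressure statements are recalled from Allaire rather than reproved there) and your honest flag that the super-critical strong pressure convergence is not closed by this argument, which is consistent with the paper, whose appendix likewise only establishes the weak convergence and limit identification in that regime.
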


For each $\eps \in (0,1)$, let $(u^\eps,p^\eps) \in H^1_0(\Omega^\eps) \times L^2_0(\Omega^\eps)$ be the unique solution to \eqref{eq:hetstokes}, which is extended by zero in the holes. Let $\hat p^\eps \in L^2_0(\Omega)$ stand for the extension $\mathcal{E} p^\eps$. We have seen, see \eqref{eq:uunif} and \eqref{eq:punif}, that $(u^\eps,\hat p^\eps)$ satisfy the estimates
\begin{equation*}
\|\nabla u^\eps\|_{L^2(\Omega)} + (\sigma_\eps \wedge 1)^{-1}\|u^\eps\|_{L^2(\Omega)} \le C(\sigma_\eps \wedge 1)\|f\|_{L^2(\Omega)}, \qquad \|\hat p^\eps\|_{L^2(\Omega)} \le C\|f\|_{L^2(\Omega)}.
\end{equation*}
As a result, in the super-critical setting, we can find a subsequence still denoted by $\eps \to 0$, so that $u^\eps/\sigma^2_\eps$ converges weakly in $L^2(\Omega)$ to some $u \in L^2(\Omega)$ and $\hat p^\eps$ converges weakly in $L^2_0(\Omega)$ to $p$. On the other hand, in the critical and sub-critical setting, in view of the uniform boundedness of $\|\hat u^\eps\|_{H^1}$, we can find a subsequence still denoted by $\eps \to 0$, along which $u^\eps$ converges weakly in $H^1_0(\Omega)$ and strongly in $L^2(\Omega)$ to $u$, and $\hat p^\eps$ converges weakly to $p$. To confirm that the whole sequence converges, it suffices to show, in each of the three regimes, that the possible limit $(u,p)$ is uniquely determined. 

In the following, we prove that the limiting point is unique in a unified manner for all three cases. 
Let $\eps \to 0$ denote any subsequence along which $u^\eps/(\sigma^2_\eps\wedge 1)$ and $\hat p^\eps$ converge. We determine the limit using the standard oscillating test function method. Fix any real valued test function $\phi \in C^\infty_c(\Omega;\R)$. Note that the support of $\phi$ is away from the boundary $\partial \Omega$. Clearly, $\phi v^\eps_k$ is an element of $H^1_0(\Omega^\eps)$, for each $k = 1,\dots,d$. Testing equation \eqref{eq:hetstokes} for $u^\eps$ against $\phi^\eps_k$, we get
\begin{equation*}
\int_\Omega \nabla u^\eps : (\nabla v^\eps_k) \phi + (\nabla\phi \cdot \nabla u^\eps)\cdot v^\eps_k - \hat p^\eps v^\eps \cdot \nabla \phi = \int_\Omega f \cdot v^\eps_k \phi.
\end{equation*}
Similarly, considering the equation \eqref{eq:rscell} for $v^\eps_k$ in $\eps \R^d_{\rm f}$, and using $u^\eps \phi$ as a test function, we obtain:
\begin{equation*}
\int_\Omega \nabla v^\eps_k : (\nabla  u^\eps) \phi + (\nabla\phi \cdot \nabla v^\eps_k)\cdot  u^\eps - q^\eps_k  u^\eps \cdot \nabla \phi = \int_\Omega \frac{ u^\eps}{\sigma_\eps^2}\cdot e_k  \phi.
\end{equation*}
Subtracting the two equations, we get:
\begin{equation}
\label{eq:qualweak}
\int_\Omega (\nabla\phi \cdot \nabla  u^\eps)\cdot v^\eps_k - (\nabla\phi \cdot \nabla v^\eps_k)\cdot  u^\eps + q^\eps_k  u^\eps\cdot \nabla \phi - \hat p^\eps  v^\eps\cdot \nabla \phi = \int_\Omega f \cdot v^\eps_k \phi  -\frac{ u^\eps}{\sigma_\eps^2}\cdot e_k \phi.
\end{equation}
We need to send $\eps \to 0$ in the equation above. An important consequence of the subtraction above is, the resulting integrals above all involve products of a weakly converging function with a strongly converging one, and hence the limit can be addressed. There are four terms in the integrand on the left hand side and two terms on the right. We label the integrals of each of those terms by $I_1,I_2,\dots,I_6$, according to their order of appearance from left to right in \eqref{eq:qualweak}.

\subsection{Super-critical size ratio} In this setting, $\sigma_\eps \to 0$, or equivalently, $\eta \gg \eta_*$. As the setting of $\eta = O(1)$ corresponds to the classical setting already treated by Tartar, we only consider the case of vanishing volume fraction, i.e. $\eta \to 0$ as $\eps \to 0$.

By \eqref{eq:uunif} and \eqref{eq:punif}, there exists a subsequence still denoted by $\eps \to 0$, such that $ u^\eps/\sigma_\eps^2$ converges to $u$ weakly in $L^2(\Omega)$, and $\mathcal{E}p^\eps$ converges to $p$ weakly in $L^2_0(\Omega)$. Passing to the limit $\eps \to 0$ in equation \eqref{eq:qualweak}, and using Theorem \ref{lem:cellqual} and Lemma \ref{lem:vq}, we can show
\begin{equation*}
\begin{aligned}
&|I_1| \le \|\nabla \phi\|_{L^d}\|\nabla  u^\eps\|_{L^2}\|v^\eps\|_{L^{2^*}} \le C\sigma_\eps.\\
&|I_2| \le \|\nabla \phi\|_{L^\infty}\| u^\eps\|_{L^2}\|\nabla v^\eps\|_{L^2} \le C\sigma_\eps.\\
&|I_3| \le \|q^\eps_k\|_{L^2}\| u^\eps\|_{L^2}\|\nabla \phi\|_{L^\infty} \le C\sigma_\eps.
\end{aligned}
\end{equation*}
Those terms hence vanishe in the limit. For $I_4$, since $\mathcal{E}p^\eps$ converges weakly along the subsequence and $v^\eps_k$ converges strongly, we get
\begin{equation*}
\lim_{\eps \to 0} I_4 = - \int_\Omega p (M^{-1}e_k)\cdot \nabla \phi.
\end{equation*}
The limit of $I_5 + I_6$ is clear, and we obtain (using in particular the fact that $M^{-1}$ is symmetric and positive definite)
\begin{equation*}
\begin{aligned}
0 = &\int p(M^{-1}e_k)\cdot \nabla \phi + \left(f\cdot (M^{-1}e_k) - u\cdot e_k \right)\phi \\
= &\int \left(f\cdot (M^{-1}e_k) - u\cdot e_k - M^{-1}\nabla p \cdot e_k\right)\phi, \qquad \forall \varphi \in C^\infty_c(\Omega).
\end{aligned}
\end{equation*}
Due to the arbitrariness of $\phi$, we deduce that $(u,p)$ solves \eqref{eq:hpdesupc}. 
This shows that $p \in H^1(\Omega)$. On the other hand, for any scalar valued function $\psi \in H^1(\Omega)$ and for the converging sequence $u^\eps$, we have
\begin{equation*}
\int_\Omega \frac{ u^\eps}{\sigma^2_\eps} \cdot \nabla \psi = 0.
\end{equation*}
Sending $\eps \to 0$, We hence get
\begin{equation*}
\int_\Omega M^{-1}(f-\nabla p) \cdot \nabla \psi =0, \qquad \forall \psi \in H^1(\Omega).
\end{equation*}
This is precisely the weak formulation for the following problem for $p \in H^1(\Omega) \cap L^2_0(\Omega)$:
\begin{equation*}
\left\{\begin{aligned}
&\nabla \cdot [M^{-1}(f-\nabla p)] = 0 \qquad &\text{in } \Omega,\\
&n \cdot M^{-1}(f-\nabla p) = 0 \qquad &\text{on } \partial \Omega.
\end{aligned}
\right.
\end{equation*}
Note that the above problem is a rephrasing of \eqref{eq:hpdesupc}, and it admits a unique solution. As a result, the possible limit for any converging subsequence of $(u^\eps,p^\eps)$ is uniquely determined. Hence, the whole sequence converges and this establishes the qualitative convergence result in the super-critical setting.

\subsection{Critical size ratio} In this setting, $\sigma_\eps$ is of order one and $\sigma_\eps \to \sigma_0$ for some positive number $\sigma_0$. By \eqref{eq:uunif} and \eqref{eq:punif}, the sequence $ u^\eps$ is uniformly bounded in $H^1_0(\Omega)$ and $\hat p^\eps$ is uniformly bounded in $L^2_0(\Omega)$. Hence, there exists a subsequence, still denoted by $\eps \to 0$, such that $ u^\eps$ converges weakly in $H^1_0(\Omega)$ and strongly in $L^2(\Omega)$ to some vector field $u \in H^1_0(\Omega)$, and $\hat p^\eps$ converges weakly in $L^2_0(\Omega)$ to some scalar field $p$.

Examine \eqref{eq:qualweak} along this subsequence. Because $\nabla  u^\eps$ converges weakly and $v^\eps_k$ converges strongly, we have
\begin{equation*}
\lim_{\eps \to 0} I_1 = \int_\Omega (\nabla \phi \cdot \nabla u) \cdot (M^{-1}e_k).
\end{equation*}
For the same reason $I_2$ converges and its limit is zero due to \eqref{eq:gradvconv}. The limits of $I_4$ and $I_5 + I_6$ are also clear. For $I_3$, we use \eqref{eq:qquant1} and the uniform bound on $\| u^\eps\|_{H^1}$ to conclude that, for $d\ge 3$, 
\begin{equation*}
|I_3| \le C\eta^{\frac{d-2}{2}} \| u^\eps \cdot \nabla \phi\|_{H^1} \rightarrow 0, \quad \text{as } \eps \to 0.
\end{equation*}
The same limit also holds for $d=2$. We hence get, along the chosen subsequence, 
\begin{equation*}
\int_{\Omega} (M^{-1}e_k) \cdot \left[\nabla \phi \cdot \nabla u  - p\nabla \phi\right] = \int_\Omega M^{-1}e_k \cdot (\phi f) - e_k \cdot \frac{u}{\sigma^2_0} \phi, \qquad \forall\, \phi \in C^\infty_c(\Omega).
\end{equation*}
After multiplying  $M$ from the left on both sides of the equality and integration by parts, we obtain
\begin{equation*}
\int_\Omega \nabla u \cdot \nabla \phi - p\nabla \phi + \frac{Mu}{\sigma^2_0} \phi = \int_\Omega f\phi, \qquad \forall \varphi \in C^\infty_c(\Omega).
\end{equation*}
This shows that $(u,p)$ solves the problem \eqref{eq:hpdec} in the distributional sense. Since $u\in H^1_0(\Omega)$ and $p\in L^2(\Omega)$, the pair $(u,p)$ also solve the above problem in the weak sense. On the other hand, it is clear from the positivity of $M$ that the weak solution is unique. We conclude that the limiting point of $( u^\eps,\hat p^\eps)$ is uniquely determined, and the whole sequence converges accordingly.

\subsection{Sub-critical size holes} In this setting, $\sigma_\eps \to \infty$ and $\sigma_\eps^{-1}$ vanishes in the limit of $\eps \to 0$. In view of \eqref{eq:uunif}, there exist subsequences along which $u^\eps$ converges weakly in $H^1$ and strongly in $L^2$ to some vector field $u \in H^1_0(\Omega)$, and $\hat p^\eps$ converges weakly in $L^2$ to some scalar field $p$. 

The argument that we use to characterize the possible limiting points $(u,p)$ is essentially the same as in the critical case, except that $I_3$ is simpler to deal with. Indeed, it vanishes in the limit in view of \eqref{eq:qquant}. After sending $\eps \to 0$ along the converging subsequence, \eqref{eq:qualweak} becomes
\begin{equation*}
\int_{\Omega} (M^{-1}e_k) \cdot \left[\nabla \phi \cdot \nabla u  - p\nabla \phi\right] = \int_\Omega M^{-1}e_k \cdot (\phi f), \qquad \forall\, \phi \in C^\infty_c(\Omega).
\end{equation*}
Arguing as in the critical setting, we conclude that any limit $(u,p)$ of $( u^\eps, \hat p^\eps)$ must be given by the unique solution of \eqref{eq:hpdesubc}. The whole sequence of $( u^\eps, \hat p^\eps)$ hence converges to the unique solution of \eqref{eq:hpdesubc}. This completes the proof of Theorem \ref{thm:homqual}.

\section{The unified framework for quantitative convergence}
\label{sec.app.quant}

\subsection{Critical holes}

We modify \eqref{eq:zetatau-super} and consider the following discrepancy functions:
\begin{equation}
\label{eq:errsc}
\zeta^\eps = u^\eps -  \frac{\sigma^2_\eps}{\sigma_0^2} v^\eps_k(x) (Mu)^k \qquad \text{and} \qquad \tau^\eps = p^\eps(x) - p(x) - \frac{\sigma_\eps^2}{\sigma_0^2} q^\eps_k(x) (Mu)^k.
\end{equation}
Direct computation then shows that the following equations hold in the perforated domain $\Omega^\eps$:
\begin{equation}
  \label{eq:zteq_c}
\left\{
\begin{aligned}
  &-\Delta \zeta^\eps + \nabla \tau^\eps = \left(({\sigma_\eps}/{\sigma_0})^2-1\right)\Delta u  + 2({\sigma_\eps}/{\sigma_0})^2 \partial_\ell[(v^\eps_k-M^{-1}e_k)\partial_\ell (Mu)^k] +({\sigma_\eps}/{\sigma_0})^2 \check{s}\\ 
  & \qquad\qquad\qquad - ({\sigma_\eps}/{\sigma_0})^2 (v^\eps_k - M^{-1}e_k)^i\Delta(Mu)^k
 - ({\sigma_\eps}/{\sigma_0})^2 q^\eps_k \nabla (Mu)^k,\\
&\nabla \cdot \zeta^\eps = -({\sigma_\eps}/{\sigma_0})^2(v^\eps_k - M^{-1}e_k) \cdot \nabla (Mu)^k. 
\end{aligned}
\right.
\end{equation}
Moreover, since the solution $u$ of the homogenized problem \eqref{eq:hpdec} vanishes on $\partial \Omega$, the boundary condition $\zeta^\eps = 0$ holds on $\partial \Omega^\eps$. We recognize the system as a special case of \eqref{eq:vpsystem} with the data: $\sigma =1$, $h=0$ (since $u=0$), $s = s_1 + s_2$, and
\begin{equation*}
  \begin{aligned}
	&g=-(\sigma_\eps/\sigma_0)^2(v^\eps_k-M^{-1}e_k)\cdot \nabla(Mu)^k, \quad  F = 2(\sigma_\eps/\sigma_0)^2 (v^\eps_k-M^{-1}e_k)\nabla(Mu)^k,\\
	&b = [(\sigma_\eps/\sigma_0)^2-1]\Delta u - (\sigma_\eps/\sigma_0)^2\Delta(Mu)^k (v^\eps_k-M^{-1}e_k),	 \quad s_1 = (\sigma_\eps/\sigma_0)^2 \check{s},
  \end{aligned}
\end{equation*}
and
\begin{equation*}
  \langle s_2,\varphi\rangle = -(\sigma_\eps/\sigma_0)^2 \int q^\eps_k \nabla(Mu)^k \cdot \varphi.
\end{equation*}

\begin{proof}[Proof of Theorem \ref{thm:homerr} in the critical setting] 
  Note that in the critical setting, we have $\sigma_\eps \sim \sigma_0 \sim 1$ and $\kappa_\eta \sim \eps$. By the estimates of $(v^\eps_k,q^\eps_k)$'s, we easily get
  \begin{equation*}
	\|g\|_{L^2} + \|F\|_{L^2} \le C\kappa_\eta \|\nabla u\|_{L^\infty}, \quad \|b\|_{L^2} \le C(\kappa_\eta + |1-(\sigma_\eps/\sigma_0)^2|)\|\nabla^2 u\|_{L^\infty}.
  \end{equation*}
  For the distributions $s_1$ and $s_2$, by \eqref{eq:sepskbdd-2} and following the analysis in the previous setting, and noting that $\kappa_\eta \ll \sqrt{\eps}$, we check
  \begin{equation*}
	\|s_1\|_{H^{-1}(\Omega^\eps)} \le C\sqrt{\eps}\|u\|_{W^{1,\infty}}.
  \end{equation*}
  For $s_2$, we use \eqref{eq:qquant1} to get $\|s_2\|_{H^{-1}(\Omega^\eps)} \le C\kappa_\eta \|u\|_{W^{1,\infty}}$.

  Combine all the estimates above and apply \eqref{eq:benergy}. We conclude that
\begin{equation*}
  \|\zeta^\eps\|_{H^1(\Omega)} + \|\hat\tau^\eps\|_{L^2(\Omega)} \le C\left(\kappa_\eta + |1-(\sigma_\eps/\sigma_0)^2| + \sqrt{\eps}\right)\|u\|_{W^{2,\infty}}.
\end{equation*}
Note that $\sqrt{\eps}$ is much larger than $\kappa_\eta$ since $\kappa_\eta \sim \eps$, so the $\check{s}$ term makes the largest contribution to the error. The above establishes the first part of \eqref{eq:err_c}. The second part follows from the estimate of $\hat\tau^\eps$ and the fact that $\int \tau^\eps$ is of order $O(\kappa_\eta)$, which in turns follows from \eqref{eq:qquant1}.
\end{proof}

\begin{remark}
Note that the choice \eqref{eq:errsc} corresponds to setting the following correctors
\begin{equation*}
  r_\eps = (\sigma_\eps/\sigma_0)^2 (v^\eps_k-M^{-1}e_k)(Mu)^k + [(\sigma_\eps/\sigma_0)^2 -1]u, \quad t_\eps = (\sigma_\eps/\sigma_0)^2 q^\eps_k (Mu)^k.
\end{equation*}
Clearly we have
\begin{equation*}
  \|r_\eps\|_{L^2} \le C(\kappa_\eta +|1-(\sigma_\eps/\sigma_0)^2|)\|u\|_{L^\infty}. 
\end{equation*}
This allows us to show smallness of $\|u^\eps-u\|_{L^2}$. Note, however, the $L^2$ norm of the pressure corrector does not seem small.
\end{remark}

\subsection{Sub-critical holes}

We modify \eqref{eq:zetatau-super} and consider the discrepancy functions:
\begin{equation}
\label{eq:errsubc}
\zeta^\eps = u^\eps -  v^\eps_k(x) (Mu)^k \qquad \text{and} \qquad \tau^\eps = p^\eps(x) - p(x) - q^\eps_k(x) (Mu)^k.
\end{equation}
Direct computation shows that
\begin{equation}
  \label{eq:vteq_subc}
\left\{
\begin{aligned}
	&-\Delta \zeta^\eps + \nabla \tau^\eps = -\sigma^{-2}_\eps Mu  + \check{s} - (v^\eps_k-M^{-1}e_k)\Delta (Mu)^k\\
&\qquad\qquad\qquad\qquad + 2\partial_\ell[(v^\eps_k-M^{-1}e_k)^i \partial_\ell(Mu)^k] 
 - q^\eps_k \nabla (Mu)^k,\\
 &\nabla \cdot \zeta^\eps = -(v^\eps_k - M^{-1}e_k) \cdot \nabla (Mu)^k.
 \end{aligned}
 \right.
\end{equation}
Again the boundary condition $\zeta^\eps = 0$ on $\partial \Omega^\eps$ is satisfied.

We recognize the above system for $(\zeta^\eps,\hat\tau^\eps)$, where $\hat\tau^\eps$ denotes the mean zero part of $\tau^\eps$, as a special case of \eqref{eq:vpsystem} with the following data: $\sigma=1$, $h=0$, $s=s_1+s_2$, and
\begin{equation*}
  \begin{aligned}
	&b = -\sigma^{-2}_\eps Mu - (v^\eps_k -M^{-1}e_k)\Delta(Mu)^k, \quad &&F = 2(v^\eps_k-M^{-1}e_k)\otimes \nabla(Mu)^k,\\
	&g = -(v^\eps_k-M^{-1}e_k)\cdot \nabla(Mu)^k, \quad &&s_1 = \check{s}, \quad \langle s_2,\varphi\rangle = -\int q^\eps_k \nabla(Mu)^k\cdot \varphi.
  \end{aligned}
\end{equation*}

\begin{proof}[Proof of Theorem \ref{thm:homerr} in the sub-critical setting] From our estimates of $v^\eps_k$ and $q^\eps_k$'s, we obtain
  \begin{equation*}
	\begin{aligned}
	&\|F\|_{L^2} + \|g\|_{L^2} \le C\kappa_\eta \|u\|_{W^{1,\infty}}, \quad \|s_2\|_{H^{-1}(\Omega^\eps)} \le C\kappa_\eta \|u\|_{W^{2,\infty}},\\
	&\|b\|_{L^2} \le C(\sigma_\eps^{-2}\|u\|_{L^\infty} + \kappa_\eta\|\Delta u\|_{L^\infty}).
  \end{aligned}
  \end{equation*}
  The estimate of $s_1$ is still a consequence of \eqref{eq:sepskbdd-2} and reads
  \begin{equation*}
	\|s_1\|_{H^{-1}(\Omega^\eps)} \le C\sigma_\eps^{-1}(\kappa_\eta + \sqrt{\eps}) \|u\|_{W^{1,\infty}}.
  \end{equation*}
Note that in the sub-critical setting $O(\kappa_\eps) \ll O(\eps)$, and $O(\kappa_\eps/\sigma_\eps) \ll O(\sigma_\eps^{-2})$. All the results above combined yield
\begin{equation*}
  \|\nabla \zeta^\eps\|_{L^2(\Omega)} + \|\hat\tau^\eps\|_{L^2(\Omega)} \le C\left(\frac{1}{\sigma_\eps^2} + \frac{\sqrt{\eps}}{\sigma_\eps}\right)\|u\|_{W^{2,\infty}}
\end{equation*}
The above establishes the first part of \eqref{eq:err_subc}. The second part also holds because the mean of $\tau^\eps$ is smaller, of order $O(\kappa_\eta)$. 
\end{proof}

\begin{remark} Note that \eqref{eq:errsubc} corresponds to choosing the correctors
\begin{equation*}
  r_\eps = (v^\eps_k-M^{-1}e_k)(Mu)^k, \qquad t_\eps = q^\eps_k (Mu)^k.
\end{equation*}
Clearly, $\|r_\eps\|\le C\kappa_\eta\|u\|_{L^\infty}$ and $\|t_\eps\|_{L^2} \le C\sigma_\eps^{-1}\|u\|_{L^\infty}$. The second part of \eqref{eq:err_subc} then follows.
\end{remark}

\section*{Acknowledgment}
WJ is partially supported by the New Cornerstone Science Foundation grant - NCI202310 and by MOST-2023YFA1008902. YL has been supported by the Recruitment Program of Global Experts of China. YL is partially supported by the  NSF of China under Grant 12171235. CP is partially supported by the Agence Nationale de la Recherche, project SINGFLOWS grant ANR-18-CE40-0027-01, project CRISIS grant ANR-20-CE40-0020-01, project BOURGEONS grant ANR-23-CE40-0014-01 and by the CY Initiative of Excellence, project CYFI (CYngular Fluids and Interfaces) and project CYNA (CY Nonlinear Analysis).

\section*{Data availability statement}

Data sharing is not applicable to this article as no datasets were generated or analyzed during the current study.

\section*{Conflict of interest}

The authors declare that they have no conflict of interest.

\bibliographystyle{abbrv}
\bibliography{stokes.bib}

\end{document}